\renewcommand{\leq}{\leqslant}
\renewcommand{\geq}{\geqslant}
\numberwithin{equation}{section}
\def\stacksum#1#2{{\stackrel{{\scriptstyle #1}}
{{\scriptstyle #2}}}}
\newcommand{\Cc}{\mathbf{C}}
\newcommand{\Zz}{\mathbf{Z}}
\newcommand{\Rr}{\mathbf{R}}
\newcommand{\Hh}{\mathbf{H}}
\newcommand{\Qq}{\mathbf{Q}}
\newcommand{\Fp}{\mathbf{F}}
\newcommand{\Tt}{\mathbf{T}}
\newcommand{\G}{\mathbf{G}}
\newcommand{\expect}{\text{\boldmath$E$}}
\newcommand{\charfun}{\mathbf{1}}
\newcommand{\mods}[1]{\,(\mathrm{mod}\,{#1})}
\newcommand{\ra}{\rightarrow}
\newcommand{\lra}{\longrightarrow}
\newcommand{\injecte}{\hookrightarrow}
\newcommand{\fleche}[1]{\stackrel{#1}{\lra}}
\DeclareMathOperator{\spec}{Spec}
\DeclareMathOperator{\Reel}{Re}
\DeclareMathOperator{\frob}{\mathrm{Fr}}
\DeclareMathOperator{\Gal}{Gal}
\DeclareMathOperator{\Tr}{Tr}
\DeclareMathOperator{\Hom}{Hom}
\newcommand{\eps}{\varepsilon}
\newcommand{\sheaf}[1]{\mathcal{{#1}}}
\DeclareMathSymbol{\gena}{\mathord}{letters}{"3C}
\DeclareMathSymbol{\genb}{\mathord}{letters}{"3E}
\theoremstyle{plain}
\newtheorem{theorem}{Theorem}[section]
\newtheorem{lemma}[theorem]{Lemma}
\newtheorem{corollary}[theorem]{Corollary}
\newtheorem{proposition}[theorem]{Proposition}
\theoremstyle{remark}
\newtheorem{remark}[theorem]{Remark}
\theoremstyle{definition}
\newtheorem{example}[theorem]{Example}
\newcommand{\uple}[1]{\text{\boldmath${#1}$}}
\newcommand{\zeros}{\mathcal{Z}}
\newcommand{\nzeros}{\mathcal{\tilde{Z}}}
\newcommand{\vect}[1]{\langle\,{#1}\,\rangle_{a}}
\newcommand{\mult}[1]{\langle\,{#1}\,\rangle_{m}}
\newcommand{\reladd}[1]{\mathrm{Rel}({#1})_{a}}
\newcommand{\relmul}[1]{\mathrm{Rel}({#1})_{m}}
\newcommand{\relmult}[1]{\mathrm{Rel}_0({#1})_{m}}
\newcommand{\reltriv}[1]{\mathrm{Triv}({#1})_{m}}
\newcommand{\pione}[1]{\pi_1({{#1}},\eta_{{\scriptscriptstyle{{#1}}}})}
\newcommand{\pioneg}[1]{\pi_1(\bar{{{#1}}},\eta_{{\scriptscriptstyle{\bar{#1}}}})}
\newcommand{\pionet}[1]{\pi_1^t(\bar{{{#1}}},\eta_{{\scriptscriptstyle{\bar{#1}}}})}
\begin{document}

\title{The large sieve, monodromy and zeta functions of algebraic
  curves, II: independence of the zeros}
\author{E. Kowalski}


\address{ETH Zurich -- D-MATH, R\"amistrasse 101, 8092 Z\"urich,
  Switzerland
}
\email{kowalski@math.ethz.ch} 

\keywords{Families of curves over finite fields, zeta and
  $L$-functions, large sieve, Grand Simplicity Hypothesis, Random
  Matrix Models of $L$-functions, Chebychev bias, Frobenius tori}

\subjclass[2000]{Primary 11N35, 11G25, 14G15; Secondary 14D10, 11M99}

\begin{abstract}Using the sieve for Frobenius developed earlier by the
  author, we show that in a certain sense, the roots of the
  $L$-functions of most algebraic curves over finite fields do not
  satisfy any non-trivial (linear or multiplicative) dependency
  relations. This can be seen as an analogue of conjectures of
  $\Qq$-linear independence among ordinates of zeros of $L$-functions
  over number fields.  As a corollary of independent interest, we find
  for ``most'' pairs of distinct algebraic curves over a finite field
  the form of the distribution of the (suitably normalized) difference
  between the number of rational points over extensions of the ground
  field. The method of proof also emphasizes the relevance of Random
  Matrix models for this type of arithmetic questions. We also
  describe an alternate approach, suggested by N. Katz, which relies
  on Serre's theory of Frobenius tori.
\end{abstract}

\maketitle

\section{Introduction}

In a number of studies of the fine distribution of primes, there
arises the issue of the existence of linear dependence relations, with
rational coefficients, among zeros (or rather ordinates of zeros) of
the Riemann zeta function, or more generally of Dirichlet
$L$-functions.  This was important in disbelieving (then disproving,
as done by Odlyzko and te Riele) Mertens's Conjecture
\begin{equation}\label{eq-mus}
\Bigl|\sum_{n\leq x}{\mu(n)}\Bigr|<\sqrt{x},\quad\quad\text{ for } x
\geq 2
\end{equation}
as Ingham~\cite{ingham} showed how it implied that the zeros of
$\zeta(s)$ are $\Qq$-linearly dependent.\footnote{\ In fact, that
  zeros ``arbitrarily high'' on the critical line are linearly
  dependent. For the most recent work in studying the left-hand side
  of~(\ref{eq-mus}) using the assumption of linear independence, see
  the work of N. Ng~\cite{ng}.} 
\par
More recently, this turned out to be crucial in understanding the
``Chebychev bias'' in the distribution of primes in arithmetic
progressions (the apparent preponderance of primes $\equiv 3\mods{4}$
compared to those $\equiv 1\mods{4}$, and generalizations of this), as
discussed in depth by Rubinstein and
Sarnak~\cite{rubinstein-sarnak}. They introduce the ``Grand Simplicity
Conjecture'' as the statement that the set of all ordinates
$\gamma\geq 0$ of the non-trivial zeros $\rho$ of Dirichlet
$L$-functions $L(s,\chi)$ are $\Qq$-linearly independent when $\chi$
runs over primitive Dirichlet characters and the zeros are counted
with multiplicity (indeed, ``simplicity'' relates to the particular
corollary of this conjecture that all zeros of Dirichlet $L$-functions
are simple).
\par
Building on the fact that our current knowledge of the behavior of
zeros of zeta functions of (smooth, projective, geometrically
connected) algebraic curves over finite fields is somewhat more
extensive, we consider analogues of this type of independence
questions in the context of finite fields.
Let $C/\Fp_q$ be such an algebraic curve over a finite field with $q$
elements and characteristic $p$, and let $g\geq 0$ be its genus. Its
zeta function $Z(C,s)$ is defined (first for $s\in\Cc$ with $\Reel(s)$
large enough) by either of the equivalent expressions
$$
Z(C,s)=\exp\Bigl(\sum_{n\geq 1}{\frac{|C(\Fp_{q^n})|}{n}q^{-ns}}\Bigr)
=\prod_{\stacksum{\text{$x$ closed}}{\text{point in $C$}}}
{(1-N(x)^{-s})^{-1}}
$$
and it is well-known (as proved by F.K. Schmidt) that it can be
expressed as
$$
Z(C,s)=\frac{L(C,s)}{(1-q^{-s})(1-q^{1-s})}
$$
where $L(C,s)=P_C(q^{-s})$ for some polynomial $P_C(T)\in\Zz[T]$ of
degree $2g$. This polynomial (which is also called the $L$-function of
$C/\Fp_q$) may be factored as
$$
P_C(T)=\prod_{1\leq j\leq 2g}{(1-\alpha_j T)},
$$
where the ``roots'' (or inverse roots, really) $\alpha_j$, $1\leq
j\leq 2g$, satisfy $|\alpha_j|=\sqrt{q}$, as proved by Weil. This is
well-known to be the analogue of the Riemann Hypothesis, as we recall:
writing
$$
\alpha_j=q^{w_j}e(\theta_j),\quad\text{ where } w_j,\ \theta_j\in\Rr,\quad
e(z)=e^{2i\pi z},
$$
implies that the zeros $\rho$ of $L(C,s)$ are given by
$$
\rho=w_j+\frac{2\pi i \theta_j}{\log q}+\frac{2ik\pi }{\log q}
$$
for $k\in\Zz$, $1\leq j\leq 2g$. So Weil's result
$|\alpha_j|=\sqrt{q}$ corresponds to $w_j=1/2$, hence to
$\Reel(\rho)=1/2$ for any zero $\rho$ of $L(C,s)$.
\par
It is clearly of interest to investigate the possible linear relations
among those zeros as an analogue of the conjectures of linear
independence for ordinates of zeros of Dirichlet $L$-functions. Note
however that if we allow all imaginary parts, many ``trivial''
relations come from the fact that, e.g., the $\theta_j+k$, $k\in\Zz$,
are $\Qq$-linearly dependent. One must therefore consider $\theta_j$
up to integers, and the simplest way to do this is to consider
multiplicative relations
$$
\prod_{1\leq j\leq 2g}{e(n_j\theta_j)}=1
$$
with $n_j\in\Qq$ or, raising to a large power to eliminate the
denominator, relations
$$
\prod_{1\leq j\leq 2g}{\Bigl(\frac{\alpha_j}{\sqrt{q}}\Bigr)^{n_j}}=1
$$
with $n_j\in\Zz$. This, in fact, also detects $\Qq$-linear
dependencies among the components of the vector
$$
(1,\theta_1,\ldots, \theta_{2g})
$$
of size $2g+1$ (which is important for later applications).
\par
We will indeed study this problem, but at the same time we will
consider another independence question which seems fairly natural,
even if no particular analogue over number fields suggests itself: are
the $\alpha_j$, or the $1/\alpha_j$, linearly independent over $\Qq$?
\footnote{\ In fact, what we will prove about this will be helpful in
  one step of the study of the multiplicative case.}
\par
In the multiplicative case, it is immediately clear that we have to
take into account the functional equation
$$
L(C,s)=q^{g(1-2s)}L(C,1-s),
$$
which may be interpreted as stating that for any $j$, $q/\alpha_j$ is
also among the inverse roots. In particular, except if
$\alpha_j=\pm\sqrt{q}$, there are identities $\alpha_j\alpha_k=q$ with
$j\not=k$, leading to multiplicative relations of the form
$$
\alpha_j\alpha_k=\alpha_{j'}\alpha_{k'}
$$
(this is similar to the fact that a root $1/2+i\gamma$ of $L(s,\chi)$,
for a Dirichlet character $\chi$, gives a root $1/2-i\gamma$ of
$L(s,\bar{\chi})$, which leads to the restriction of the Grand
Simplicity Conjecture to non-negative ordinates of zeros). Hence the
most natural question is whether those ``trivial'' relations are the
only multiplicative relations.
\par
Finally, since dealing with a single curve seems still far away of
this Grand Simplicity Hypothesis, which involves all Dirichlet
$L$-functions, an even more natural-looking analogue would be to ask
the following: given a family of curves, interpreted as an algebraic
family $\mathcal{C}\ra U$ of curves of genus $g$ over some parameter
variety $U/\Fp_q$, what (if any) multiplicative relations can exist
among the $\alpha_{j}(t)/\sqrt{q}$ which are the inverse roots of the
polynomials $P_{C_t}(T)$, for \emph{all} $t\in U(\Fp_q)$?
\par
\medskip 
\par
We will prove in this paper some results which give evidence that this
type of independence holds. Of course, for a fixed curve, it might
well be that non-trivial relations do hold among the roots (see
Section~\ref{sec-numerics} for examples). However, looking at suitable
algebraic families, we will show that for \emph{most} curves $C_t$,
$t\in U(\Fp_q)$, their zeros and inverse zeros are as independent as
possible, both additively and multiplicatively.
\par
The first idea that may come to mind (along the lines
of~\cite{katz-sarnak}) is to use the fact that the set of matrices in
a compact group such as $SU(N,\Cc)$ or $USp(2g,\Cc)$ for which the
eigenvalues satisfy non-trivial relations is of measure zero (for the
natural measure, induced from Haar measure), and hope to apply
directly Deligne's Equidistribution Theorem, which states that after
taking suitable limits, the zeros of families of polynomials $P_{C_t}$
become equidistributed with respect to this measure.  However, the
sets in question, though they are measure-theoretically insignificant,
are also dense in the corresponding group, and this means
equidistribution does not by itself guarantee the required result. So
instead of this approach, we will use more arithmetic information on
the zeta functions.\footnote{\ Note however that for \emph{multiplicative}
relations, which are in a sense the most interesting, one can apply
Deligne's Theorem, after some preliminary work involving the specific
properties of the eigenvalues, see Section~\ref{sec-frob-tori}.}
\par
Here is now a sample statement, where we can easily give concrete
examples.  We use the following notation: given a finite family
$\uple{\alpha}=(\alpha_j)$ of non-zero complex numbers, we write
$\vect{\uple{\alpha}}$ for the $\Qq$-vector subspace of $\Cc$
generated by the $\alpha_j$, and $\mult{\uple{\alpha}}$ for the
multiplicative subgroup of $\Cc^{\times}$ generated by the
$\alpha_j$. For an algebraic curve $C$ over a finite field (resp. for
finitely many curves $\uple{C}=(C_1,\ldots, C_k)$ over a common base
field), we denote by $\zeros(C)$ the multiset of inverse zeros of
$P_C(T)$ (resp. by $\zeros(\uple{C})$ the multiset of inverse zeros of
the product $P_{C_1}\cdots P_{C_k}$), and similarly with $\nzeros(C)$
and $\nzeros(\uple{C})$ for the multisets of normalized inverse zeros
$\alpha/\sqrt{q}$.

\begin{proposition}\label{pr-1}
  Let $f\in\Zz[X]$ be a squarefree monic polynomial of degree $2g$,
  where $g\geq 1$ is an integer. Let $p$ be an odd prime such that $p$
  does not divide the discriminant of $f$, and let $U/\Fp_p$ be the
  open subset of the affine $t$-line where $f(t)\not=0$. Consider the
  algebraic family $\mathcal{C}_f\ra U$ of smooth projective
  hyperelliptic curves of genus $g$ given as the smooth projective
  models of the curves with affine equations
  $$
  C_t\,:\, y^2=f(x)(x-t),\quad\text{ for } t\in U.
  $$
  \par
  Then for any extension $\Fp_q/\Fp_p$, we have
\begin{equation}
  |\{t\in U(\Fp_q)\,\mid\, \text{there is a non-trivial linear
    relation among $\zeros(C)$} \}|\ll q^{1-\gamma^{-1}}(\log
  q)\label{eq-add},
\end{equation}
\begin{multline}
  |\{t\in U(\Fp_q)\,\mid\, \text{there is a non-trivial multiplicative }\\
  \text{ relation among $\nzeros(C)$} \}|\ll
  q^{1-\gamma^{-1}}(\log q),
\label{eq-mul}
\end{multline}
where $\gamma=4g^2+2g+4>0$, the implied constants depending only on $g$.
\end{proposition}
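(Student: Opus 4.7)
The plan is to deduce both estimates from the large sieve for Frobenius established in Part~I of this series, applied to the pencil $\mathcal{C}_f \to U$ through the $\ell$-adic representations on the Tate modules of the Jacobians $J_{C_t}$. The first preparatory step is a monodromy computation: because the branch locus of $y^2 = f(x)(x-t)$ consists of the fixed zero set of $f$ together with the moving point $x = t$, results of J.-K.~Yu and Katz identify the geometric monodromy group of the family with the full symplectic group $Sp(2g)$. This puts the pencil squarely inside the framework of Part~I, and under this identification the eigenvalues of $F_t$ on the Tate module are precisely the elements of $\nzeros(C_t)$.

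The combinatorial heart of the argument is then to exhibit a conjugation-invariant closed subset $\Omega \subset Sp(2g)$ which captures the existence of a non-trivial additive (resp.\ non-trivial multiplicative, modulo the functional-equation relations) dependence among the eigenvalues of a matrix, and to show that $\Omega$ is a proper subvariety. In the additive case, such a dependence translates into explicit polynomial relations on the coefficients of the characteristic polynomial of a matrix in $Sp(2g)$. In the multiplicative case, via the Frobenius-tori reduction developed later in the paper, the presence of a non-trivial relation among the normalized eigenvalues is equivalent to a drop in the dimension of the Frobenius torus attached to the matrix --- again a closed algebraic condition on $Sp(2g)$. In both cases, reducing modulo $\ell$ and invoking Lang--Weil gives $|\Omega(\Fp_\ell)|/|Sp(2g,\Fp_\ell)| \ll \ell^{-1}$, uniformly in $\ell$, with implied constants depending only on $g$.

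With these inputs, the large sieve of Part~I yields, for any cutoff $M \geq 2$, a bound of the shape
$$
|\{t \in U(\Fp_q) : F_t \bmod \ell \in \Omega \text{ for all primes } \ell \leq M,\ \ell \neq p\}|
\ll \frac{q + c_g\, M^{2\dim Sp(2g) + O(1)}}{\sum_{\ell \leq M,\ \ell \neq p} \ell^{-1}},
$$
and, using $\dim Sp(2g) = 2g^2 + g$ and balancing the two terms in the numerator against the harmonic sum by taking $M$ a suitable fractional power of $q$, this converts into the claimed bound $q^{1 - \gamma^{-1}} \log q$ with $\gamma = 4g^2 + 2g + 4$.

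The main obstacle is the density estimate in the multiplicative case: a naive enumeration over all integer vectors $(n_j)$ indexing possible multiplicative relations is hopeless, since the sieve must be uniform in $\ell$ and a priori different $(n_j)$ can yield genuinely different polynomial conditions in $Sp(2g,\Fp_\ell)$. The cleanest route, and the one I would take, is to package all non-trivial multiplicative relations into the single geometric condition of a rank drop for the Frobenius torus of the matrix, verify that this cuts out a proper closed subvariety of $Sp(2g)$, and then control its $\Fp_\ell$-density by Lang--Weil. The additive case is considerably easier and can serve as an auxiliary input for this step.
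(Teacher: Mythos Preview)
There is a genuine gap at exactly the point you flag as ``the main obstacle.'' The set of matrices in $Sp(2g)$ whose eigenvalues satisfy some non-trivial $\Qq$-linear (resp.\ multiplicative) relation is \emph{not} a proper closed subvariety: for each fixed relation vector it is closed, but the union over all of $\Qq^{2g}$ (resp.\ $\Zz^{2g}$) is a countable union of proper closed sets, hence dense and not closed. This is precisely the obstruction noted in the introduction to applying Deligne's equidistribution theorem directly. In particular, ``rank drop of the Frobenius torus'' is not a closed condition on an arbitrary matrix; Chin's theorem (Theorem~\ref{th-chin}) produces a proper closed set only after exploiting the $p$-adic constraints (2)--(3) on Frobenius eigenvalues, which cut the possible non-maximal tori down to a finite list. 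The argument in Section~\ref{sec-frob-tori} then fixes a single prime $\ell$ and works modulo powers $\ell^m$, rather than sieving over many primes as you propose. For the additive case there is no analogous rescue at all: Remark~\ref{rm-frob-tori-lindep} exhibits abelian varieties with maximal Frobenius torus whose Frobenius eigenvalues satisfy any prescribed linear relation, so additive independence simply cannot be packaged as a geometric condition on $Sp(2g)$.

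The paper's own proof bypasses this difficulty by a different mechanism. Rather than encoding ``there is a relation'' as a condition on $Sp(2g)$, it shows (Section~\ref{sec-algebraic}, via the decomposition of the permutation representation of $W_{2g}$ following Girstmair) that if the Galois group of the splitting field of $P_t$ over $\Qq$ is the full group $W_{2g}$ and the trace of Frobenius is non-zero, then no non-trivial additive relation can hold; a short passage to the squares of the normalized roots then yields the multiplicative case as well. The sieve for Frobenius from Part~I is applied, not to a hypothetical bad locus in $Sp(2g)$, but to the problem of forcing the splitting field of $P_t$ to have maximal Galois group. That problem \emph{does} reduce to finitely many explicit cycle-type conditions modulo~$\ell$, each of density bounded away from zero uniformly in $\ell$, and this is what produces the exponent $\gamma=4g^2+2g+4$.
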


In order to explain precisely the meaning of the statements, and to
state further generalizations more concisely, we introduce the
following notation: for any finite set $M$ of complex numbers, we define
\begin{gather}
\reladd{M}=\{(t_{\alpha})\in\Qq^{M}\,\mid\, 
\sum_{\alpha\in M}{t_{\alpha}\alpha}=0\},
\label{eq-def-reladd}\\
\relmul{M}=\{(n_{\alpha})\in\Zz^{M}\,\mid\, 
\prod_{\alpha\in M}{\alpha^{n_{\alpha}}}=1\},
\label{eq-def-relmul}
\end{gather}
the additive relation $\Qq$-vector space and multiplicative relation
group, respectively. Note $\relmul{M}$ is a free abelian group.
\par
Then, tautologically, the condition in~(\ref{eq-add}) for a given
curve may be phrased equivalently as
$$
\reladd{\zeros(C)}=0,\text{ or }\dim_{\Qq}{\vect{\zeros(C)}}<2g,
\text{ or }
\vect{\zeros(C)}\simeq \Qq^{2g},
$$
and the qualitative content of~(\ref{eq-add}) is that this holds for
most values of $t$.
\par
The interpretation of~(\ref{eq-mul}) needs more care because
of the ``trivial'' multiplicative relations among the
$\tilde{\alpha}\in\nzeros(C_t)$. Precisely, from the functional
equation, it follows that we can arrange the $2g$ normalized roots
$\tilde{\alpha}=\alpha/\sqrt{q}$ in $g$ pairs of inverses
$(\tilde{\alpha},\tilde{\alpha}^{-1})$, so that the multiplicative
subgroup $\mult{\nzeros(C_t)}\subset \Cc^{\times}$ is of rank $\leq
g$. For $M=\nzeros(C_t)$, this corresponds to the inclusion
\begin{equation}\label{eq-reltriv}
\{(n_{\tilde{\alpha}})\in \Zz^{M}\,\mid\,
n_{\tilde{\alpha}}-n_{\tilde{\alpha}^{-1}}=0\}\subset \relmul{M}.
\end{equation}
\par
Denote by $\reltriv{M}$ the left-hand abelian group (which makes sense
for any $M\subset \Cc^{\times}$ stable under inverse), and let
$\relmult{M}=\relmul{M}/\reltriv{M}$ (the group of non-trivial
relations). The interpretation of~(\ref{eq-mul}) is that most of the
time, there is equality:
\begin{equation}\label{eq-trivial-mul}
\relmul{\nzeros(C_t)}=\reltriv{\nzeros(C_t)},\quad\text{ or }\quad
\relmult{\nzeros(C_t)}=0,
\end{equation}
(or, in fact, simply $\mult{\nzeros(C_t)}\simeq \Zz^{g}$; this is
because if $\mult{\nzeros(C_t)}$ is of rank $g$, comparing ranks
implies that $\reltriv{\nzeros(C_t)}$ is of finite index in
$\relmul{\nzeros(C_t)}$, and the former is easily seen to be saturated
in $\Qq^{\nzeros(C_t)}$, so it is not a proper finite index subgroup
of a subgroup of $\Zz^{\nzeros(C_t)}$).
\par
Moreover, yet another interpretation is the following. Assume still
that $M\subset \Cc^{\times}$ is stable under inverse and of even
cardinality $2g$; order its elements in some way so that
$$
M=\{\tilde{\alpha}_1,\ldots, \tilde{\alpha}_g,
\tilde{\alpha}_1^{-1},\ldots, \tilde{\alpha}_g^{-1}\},
$$
and write $\tilde{\alpha}_j=e(\theta_{j})$, with $0\leq \theta_{j}<
1$. Then $\relmult{M}=0$ if and only if the elements
$(1,\theta_1,\ldots, \theta_g)$ are $\Qq$-linearly
independent. Indeed, assuming the former, if we have a relation
$$
t_0+\sum_{1\leq j\leq g}{t_j \theta_j}=0
$$
with $(t_0,t_1,\ldots, t_g)\in\Qq^{g+1}$, multiplying by a common
denominator $\Delta$ and exponentiating leads to
$$
\prod_{1\leq j\leq g}{\tilde{\alpha}_j^{n_j}}=1
$$
where $n_j=\Delta t_j \in\Zz$. This implies that $(n_1,\ldots,
n_g,0,\ldots, 0)\in \relmult{M}=\reltriv{M}$, and by the
definition~(\ref{eq-reltriv}), we deduce $n_j=0$, $1\leq j\leq g$, and
then $t_j=0$ for all $j$. The converse is also easy.

\begin{remark}
  In the spirit of the previous remark concerning Deligne's
  Equidistribution Theorem, note that the result may be also
  interpreted (though this is much weaker) as giving instances of the
  convergence of $\mu_n(A)$ to $\mu(A)$, where $\mu_n$ is the average
  of Dirac measures associated to the normalized geometric Frobenius
  conjugacy classes in $USp(2g,\Cc)$ for $t\in \Fp_{q^n}$,
  $f(t)\not=0$, while $\mu$ is the probability Haar measure on
  $USp(2g,\Cc)$ and $A$ is the set of unitary symplectic matrices with
  eigenangles which are non-trivially additively or multiplicatively
  dependent (so, in fact, $\mu(A)=0$).
\end{remark}

\par
\medskip 
\par
Our second result encompasses the first one and is a first step
towards independence for more than one curve. Again we state it for
the concrete families above.

\begin{theorem}\label{th-2}
  Let $f\in\Zz[X]$ be a squarefree monic polynomial of degree $2g$,
  where $g\geq 1$ is an integer. Let $p$ be an odd prime such that $p$
  does not divide the discriminant of $f$, and let $U/\Fp_p$ be the
  open subset of the affine line where $f(t)\not=0$. Let
  $\mathcal{C}_f\ra U$ be the family of hyperelliptic curves defined
  in Proposition~\ref{pr-1}.
  \par
  Let $k\geq 1$. For all finite fields $\Fp_q$ of characteristic $p$,
  and for all $k$-tuples $\uple{t}=(t_1,\ldots, t_k)\in U(\Fp_q)^k$,
  denote $\uple{C}_{\uple{t}}=(C_{t_1},\ldots, C_{t_k})$.
Then we have
\begin{gather*}
  |\{\uple{t}\in U(\Fp_q)^k\,\mid\,
  \reladd{\zeros(\uple{C}_{\uple{t}})}\not=0 \}|\ll
  c^kq^{k-\gamma^{-1}}(\log q),
  \\
  |\{\uple{t}\in U(\Fp_q)^k\,\mid\,
  \relmult{\nzeros(\uple{C}_{\uple{t}})}\not=0 \}|\ll
  c^kq^{k-\gamma^{-1}}(\log q),
\end{gather*}
where $\gamma=29kg^2>0$ and $c\geq 1$ is a constant depending only on
$g$.  In both estimates, the implied constant depends only on $g$.
\end{theorem}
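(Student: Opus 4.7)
The strategy is to deduce Theorem~\ref{th-2} from the large sieve for Frobenius developed in Part~I, applied to the $k$-fold self-product of the family $\mathcal{C}_f$. Let $V \subset U^k$ be the open subvariety of tuples $\uple{t}=(t_1,\dots,t_k)$ with pairwise distinct coordinates; the complementary ``big diagonal'' contributes only $O(q^{k-1})$ rational points, absorbed by the target bound. On $V$, the natural sieving sheaf is the direct sum $\bigoplus_i \mathrm{pr}_i^* R^1\pi_* \Qq_\ell$ of the pullbacks of the first cohomology of $\mathcal{C}_f/U$ along the $k$ coordinate projections, so that the Frobenius conjugacy class at $\uple{t}\in V(\Fp_q)$ simultaneously records the $k$-tuple $(P_{C_{t_1}},\dots,P_{C_{t_k}})$ and hence the multiset $\zeros(\uple{C}_{\uple{t}})$.

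The first main task is a monodromy computation. The one-parameter family $\mathcal{C}_f/U$ has arithmetic and geometric monodromy equal to $Sp(2g)$ (a standard hyperelliptic result in odd characteristic, already used for Proposition~\ref{pr-1}). On $V$ the geometric monodromy lies in $Sp(2g)^k$, and I would show by a Goursat--Kolchin--Ribet argument that equality holds and that for all but finitely many $\ell$ the mod-$\ell$ monodromy fills the full product $Sp(2g,\Fp_\ell)^k$. The essential point is that restricting to $V$ renders the $k$ individual monodromies geometrically independent: any non-trivial identification between two factors would, by the rigid representation theory of $Sp(2g,\Fp_\ell)$ for large $\ell$, descend to a relation on $U^2\setminus\Delta$, which can be ruled out by specialising one coordinate while varying the other.

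The next task is to bound, uniformly in $\ell$, the ``bad'' conjugacy-invariant subsets
$$\Omega_\ell^{\mathrm{add}} = \bigl\{(g_1,\dots,g_k)\in Sp(2g,\Fp_\ell)^k : \text{the eigenvalues of } g_1\oplus\cdots\oplus g_k \text{ are } \Qq\text{-linearly dependent}\bigr\}$$
and, analogously, $\Omega_\ell^{\mathrm{mul}}$ for non-trivial multiplicative relations among normalised eigenvalues in $\Fbar_\ell$, where ``trivial'' means those forced by the symplectic functional equation~(\ref{eq-reltriv}). The quantitative input required is
$$|\Omega_\ell^{\bullet}| / |Sp(2g,\Fp_\ell)^k| \ll \ell^{-1}$$
with implied constant depending only on $g$ and $k$. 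This expresses the fact that for a random element of $Sp(2g)^k$ the characteristic polynomial is generically separable and the roots are in ``general position''; it is proved by decomposing $\Omega_\ell^\bullet$ as a finite union (indexed by relation types of coefficient size bounded in terms of $g,k$, via a standard reduction killing redundant relations) and estimating each component through direct calculations on the maximal torus or through character sums on the finite symplectic group.

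Assembling the pieces, the large sieve of Part~I yields a bound of the form
$$|\{\uple{t}\in V(\Fp_q) : \mathrm{Frob}_{\uple{t},\ell}\in \Omega_\ell \text{ for all } \ell \in L\}| \ll c^k q^k H^{-1},$$
where $L$ is a dyadic window of sieve primes $L_0 \leq \ell \leq 2L_0$, the factor $c^k$ absorbs the Betti numbers of the $k$-fold product $V$, and the cohomological error from the Riemann Hypothesis over $\Fp_q$ grows like $g^2$ per factor, accumulating to $\approx kg^2$. Choosing $L_0 = q^{1/\gamma}$ with $\gamma = 29kg^2$ balances the main and error terms and produces the advertised $q^{k-\gamma^{-1}}(\log q)$ savings, in both the additive and multiplicative regimes. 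The main obstacle in this plan is the monodromy step: establishing that the mod-$\ell$ monodromy on $V$ is genuinely the whole symplectic product for a density-one set of $\ell$, with effective control of the finitely many exceptional primes. Once that is in hand, the counting of bad conjugacy classes and the application of the sieve are comparatively routine, requiring only careful bookkeeping of all exponents.
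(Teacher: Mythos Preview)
Your proposal has a genuine gap at the heart of the sieving step. You define the bad sets $\Omega_\ell^{\mathrm{add}}$ and $\Omega_\ell^{\mathrm{mul}}$ as those tuples in $Sp(2g,\Fp_\ell)^k$ whose eigenvalues satisfy a non-trivial relation, and then assert that this decomposes into a finite union ``indexed by relation types of coefficient size bounded in terms of $g,k$''. But no such bound is available a priori: if the inverse roots $\alpha_1,\ldots,\alpha_{2kg}\in\Cc$ satisfy $\sum n_i\alpha_i=0$ (or $\prod\tilde{\alpha}_i^{n_i}=1$), nothing in the Weil-number hypotheses alone forces $\max|n_i|$ to be bounded independently of $q$ and of the particular curve. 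Without that bound the union over relation types is infinite, the density estimate $|\Omega_\ell^\bullet|/|Sp(2g,\Fp_\ell)^k|\ll\ell^{-1}$ is meaningless, and the sieve cannot be applied. (Note also that your literal definition of $\Omega_\ell^{\mathrm{add}}$ speaks of ``$\Qq$-linear dependence'' of elements of $\bar{\Fp}_\ell$, which is not well-posed; this is a symptom of the same difficulty.)

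The paper circumvents this entirely by a different reduction. Rather than sieving for relations directly, it first proves an algebraic criterion (Proposition~\ref{pr-w2g-k}, via Girstmair's method): if the splitting field of $P_{C_{t_1}}\cdots P_{C_{t_k}}$ over $\Qq$ has Galois group equal to the full $W_{2g}^k$, and if each factor has non-zero trace, then $\reladd{\zeros(\uple{C}_{\uple{t}})}=0$ and $\relmult{\nzeros(\uple{C}_{\uple{t}})}=0$. The point is that the relation space is a subrepresentation of the permutation module $\Qq^M$, and the decomposition of that module into irreducibles (Lemma~\ref{lm-one-w}, Corollary~\ref{cor-many-w}) leaves almost no room. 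The sieve is then used only to show that the Galois group is maximal for most $\uple{t}$ (Theorem~\ref{th-many-k}), and \emph{this} condition is genuinely detectable mod $\ell$ via factorisation patterns of the reduced polynomial---a finite list of $4k$ sieving sets suffices. Incidentally, the monodromy step is also simpler than your outline suggests: working on the full $U^k$ (not $V$), one needs only the surjectivity of $\pi_1(\bar{U}^k)\to\pi_1(\bar{U})^k$ (Lemma~\ref{lm-surject}) together with Yu's theorem for a single factor; no Goursat--Ribet argument between the $k$ factors is required.
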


\begin{remark}
An important warning is to not read too much in this: the
dependency of $\gamma$ on $k$ means the result is trivial for $k$
unless we have (roughly speaking)
$$
c^{-k}q^{1/\gamma}\ra +\infty,\quad\text{i.e.}\quad
\frac{\log q}{29kg^2}-k\log c\ra +\infty,
$$
which means essentially (for fixed $g$) that $k=o(\sqrt{\log
  q})$. However, it leads to non-trivial results for any fixed $k$,
and even for $k$ growing slowly as a function of $q\ra +\infty$, and
in this respect it is already quite interesting. Also, note that the
``exceptional set'' of $k$-tuples trivially contains those $\uple{t}$
where two coordinates coincide; there are $\gg q^{k-1}$ of them, and
those ``diagonals'' would have to be excluded if one were to try to go
beyond such a bound.
\end{remark}

\begin{remark}
  We indicate the type of connections with distribution properties
  that arise. Those are of independent interest, and they show clearly
  the analogy with the discussion of the Chebychev Bias, in particular
  why the independence issues appear naturally there (compare with the
  arguments in~\cite[\S 2,\S 3]{rubinstein-sarnak}). 
\par
Let $C/\Fp_q$ is any (smooth, projective, geometrically connected)
algebraic curve of genus $g$, and choose $g$ inverse roots
$\alpha_{j}$ of the $L$-function of $C$, $1\leq j\leq g$, so that
$$
P_{C}(T)=\prod_{1\leq j\leq  g}
{(1-\alpha_{j}T)(1-\alpha_{j}^{-1}qT)},
$$
and write $\alpha_{j}=\sqrt{q}e(\theta_{j})$ as before.  For any
$n\geq 1$, the number of points in $C(\Fp_{q^n})$ is given by
$$
|C(\Fp_{q^n})|=
q^n+1-
\sum_{1\leq j\leq g}
{\Bigl(\alpha_{j}^n+\frac{q^n}{\alpha_{j}^{n}}\Bigr)}=
q^n+1-2q^{n/2}\sum_{1\leq j\leq g}{\cos 2\pi n\theta_{j}}.
$$
\par
If $C/\Fp_q$ is such that $\relmult{\nzeros(C)}=0$, we know that the
$g+1$ numbers $1$ and $(\theta_{j})_{i,j}$ are $\Qq$-linearly
independent. Hence, by Kronecker's theorem, the sequence
$$
(2\pi n\theta_{1},\ldots,2\pi n\theta_{g})\in (\Rr/2\pi\Zz)^g
$$
becomes equidistributed in $(\Rr/2\pi \Zz)^{g}$ as $n\ra +\infty$,
with respect to the Lebesgue measure on the torus. It follows that
$$
\frac{|C(\Fp_{q^n})|-(q^n+1)}{2q^{n/2}}
$$
becomes distributed like the image of Lebesgue measure under the map 
$$
\varphi\,:\,
\begin{cases}
  (\Rr/2\pi\Zz)^{g}\ra \Rr\\
  (\theta_1,\ldots, \theta_{g})\mapsto \cos
  \theta_1+\cdots+\cos\theta_g.
\end{cases}
$$
\par
This distribution is in fact not unexpected: we have the well-known
spectral interpretation
$$
\frac{|C(\Fp_{q^n})|-(q^n+1)}{2q^{n/2}}=\Tr(F^n)
$$
for $n\geq 1$, where $F\in USp(2g)$ is the unitarized Frobenius
conjugacy class of $C$. A remarkable result due to
E. Rains~\cite{rains} states that, for $n\geq 2g$, the eigenvalues of
a Haar-distributed random matrix in $USp(2g,\Cc)$ are distributed
\emph{exactly} like $g$ independent points uniformly distributed on
the unit circle, together with their conjugates. In particular, the
limit distribution above is therefore the distribution law of the
trace of such a random matrix.
\par
Similarly, let now $(C_1,C_2)$ be a pair of algebraic curves (smooth,
projective, geometrically connected) of common genus $g\geq 1$ over
$\Fp_q$, for which $\relmult{\nzeros(C_{1},C_{t})}=0$ -- for instance
any of the pairs given by Theorem~\ref{th-2} with $k=2$.  Write
$\alpha_{i,j}$, $\theta_{i,j}$ for the inverse roots and arguments as
above for $C_i$. 
\par
We compare the number of points on $C_1$ and $C_2$: we have
\begin{equation}\label{eq-diff-nb}
\frac{|C_1(\Fp_{q^n})|-|C_2(\Fp_{q^n})|}{q^{n/2}}
=2\sum_{1\leq j\leq g}{(\cos 2\pi n\theta_{2,j}-\cos 2\pi n \theta_{1,j})},
\end{equation}
\par
The assumption that $\relmult{\nzeros(C_{1},C_{2})}=0$ gives now that
the $2g+1$ numbers $1$ and $(\theta_{i,j})_{i,j}$ are $\Qq$-linearly
independent, and thus the sequence
$$
(2\pi n\theta_{2,1},\ldots,2\pi n\theta_{2,g},
2\pi n\theta_{1,1},\ldots,2\pi n\theta_{1,g})
$$
becomes equidistributed in $(\Rr/2\pi \Zz)^{2g}$ as $n\ra +\infty$
with respect to the Lebesgue measure on the $2g$-dimensional
torus. So, the right-hand side of~(\ref{eq-diff-nb}) becomes
equidistributed as $n\ra +\infty$ with respect to the image measure of
the Lebesgue measure $d\theta$ by the same map as above, with $2g$
angles instead of $g$ (since the cosine is an even function, it and
its opposite have the same distribution).
\par
Let $\mu_g$ be this measure, so that we have in particular, for any
$a<b$, the limit
$$
\frac{1}{N}
\Bigl|\Bigl\{
n\leq N\,\mid\,
a<\frac{|C_2(\Fp_{q^n})|-|C_1(\Fp_{q^n})|}{q^{n/2}}<b
\Bigr\}\Bigr|
\ra \int_{a}^b{d\mu_g(t)}
$$
as $N\ra +\infty$, and as a special case
$$
\frac{1}{N}
\Bigl|\Bigl\{
n\leq N\,\mid\,
|C_2(\Fp_{q^n})|<|C_1(\Fp_{q^n})|
\Bigr\}\Bigr|
\ra \frac{1}{2}
$$
as $N\ra +\infty$. This (since the assumption on $C_{1}$ and $C_{2}$
is ``almost always true'') means that there is typically no ``bias''
that can lead to the number of points on $C_1$ being larger than that
on $C_2$ when we look at extension fields of $\Fp_q$.
\par
Furthermore, we can clearly interpret $\mu_g$ as the probability law
of a sum
$$
Y_g=2\cos 2\pi X_1+\cdots +2\cos 2\pi X_{2g}
$$
of $2g$ independent random variables $2\cos 2\pi X_j$, $1\leq j\leq
2g$, where each $X_j$ is uniformly distributed on $[0,1]$ (there is no
minus sign since the cosine and its opposite have the same
distribution for uniform arguments). The characteristic function (in
other words, Fourier transform) of such a random variable is given by
$$
\varphi_g(t)=\expect(e^{itY_g})=\Bigl(
\int_0^1{e^{2it\cos 2\pi\theta}d\theta}\Bigr)^{2g}=J_0(2t)^{2g},
$$
where $J_0$ is the standard Bessel function. Furthermore, since
$$
\expect(Y_g)=0,\quad\quad 
\expect(Y_g^2)=2g\expect( (2\cos 2\pi X_1)^2) = 4g,
$$
the Central Limit Theorem implies that $Y_g/2\sqrt{g}$ converges in
law, as $g\ra +\infty$, to a standard Gaussian random variable with
variance $1$. This means that, for curves $C_1$ and $C_2$ of large
genus $g$, the further normalized difference
$$
\frac{|C_2(\Fp_{q^n})|-|C_1(\Fp_{q^n})|}{2q^{n/2}\sqrt{g}}
$$
will be distributed approximately like a standard Gaussian.
\par
It would be interesting to know what other limiting distributions can
occur for pairs of algebraic curves where there are non-trivial
relations (such as those in Section~\ref{sec-numerics}).
\end{remark}

As far as relating a result like Theorem~\ref{th-2} to the Grand
Simplicity Conjecture, even though the statement itself provides no
direct evidence, the main point is in the method of proof, which can
be interpreted as linking the problem with Random Matrix models for
families of $L$-functions. The point is that the crucial input to
apply the sieve for Frobenius, which is the main tool, is the fact
that the families of curves considered have large (symplectic)
monodromy, which in the Katz-Sarnak philosophy is the analogue of the
conjectured existence of ``symmetry types'' for families of
$L$-functions such as Dirichlet characters (precisely, the latter are
supposed to have unitary symmetry type, which is slightly
different). We refer to~\cite{rmt} for a survey of recent developments
in the area of Random Matrix models of $L$-functions, and for
discussion of the evidence available.
\par
The idea of the proofs is, roughly, to first show that a certain
maximality condition on the Galois group of the splitting field of an
individual set of zeros implies the required independence (see
Section~\ref{sec-algebraic}, which uses methods developed by Girstmair
to analyze relations between roots of algebraic equations). Then we
apply the sieve for Frobenius of the author (see~\cite{k1}
and~\cite[\S 8]{lsieve}) to check that most $\uple{C}_{\uple{t}}$
satisfy this criterion (as can be guessed from the statement of
Theorem~\ref{th-2}, the main novel issue in applying the sieve is the
need for some care in arguing uniformly with respect to $k$.)  One can
then see this type of argument as providing some kind of answer to the
question asked by Katz (see~\cite[End of Section 1]{katz-irred}) of
what could be a number field analogue of the irreducibility of zeta
functions of curves, or of other (polynomial) $L$-functions over
finite fields.
\par
After the first version of this paper was completed, along the lines
of the previous paragraph, N. Katz suggested to look at the
implications of the theory of Frobenius tori of Serre for this type of
questions.  It turns out that, indeed, one can use this theory (in the
version described by Cheewhye Chin~\cite{chin}) to get a different
proof of the multiplicative independence of the zeros, for fixed $k$
at least (in the setting of Theorem~\ref{th-2}).  The large monodromy
assumption remains essential, but the analytic argument is a bit
simpler, since one can use a uniform effective version of the
Chebotarev density theorem instead of the large sieve.  This, however,
does not significantly improve the final estimates. We sketch this
approach in Section~\ref{sec-frob-tori}. We have chosen to not remove
the earlier one because the sieve for Frobenius leads to added
information which may be useful for other purposes (e.g., the linear
independence of the roots is not controlled by Frobenius tori, see
Remark~\ref{rm-frob-tori-lindep}), and because it is (in some sense)
more elementary and accessible to analytic number theorists. For
instance, if we look at elements of $Sp(2g,\Zz)$ obtained by random
walks on such a discrete group, the approach based on Frobenius tori
would not be available to show that the probability of existence of
relations between eigenvalues of those matrices goes exponentially
fast to $0$, but it is an easy consequence of the large sieve
of~\cite[\S 7]{lsieve} and the results of Section~\ref{sec-algebraic}.
\par
We provide general versions of independence statements for any family
which has large (symplectic) monodromy.  Analogues for other symmetry
types are also easy to obtain; this is particularly clear from the
point of view of Frobenius tori, but the sieve for Frobenius can also
be adapted (see F. Jouve's thesis~\cite{jouve} for the case of
``big'' orthogonal monodromy).
\par
\medskip

\textbf{Acknowledgment.} Thanks to N. Katz for pointing out the
relevance of the work of Serre on Frobenius tori to questions of
multiplicative independence of Frobenius eigenvalues. The work of the
author was partially supported by the A.N.R through the ARITHMATRICS
project.
\par
\medskip

\textbf{Notation.} As usual, $|X|$ denotes the cardinality of a set,
$\mathfrak{S}_g$ is the symmetric group on $g$ letters, $\Fp_q$ is a
field with $q$ elements. By $f\ll g$ for $x\in X$, or $f=O(g)$ for
$x\in X$, where $X$ is an arbitrary set on which $f$ is defined, we
mean synonymously that there exists a constant $C\geq 0$ such that
$|f(x)|\leq Cg(x)$ for all $x\in X$. The ``implied constant'' is any
admissible value of $C$. It may depend on the set $X$ which is always
specified or clear in context.  On the other hand, $f=o(g)$ as $x\ra
x_0$ means that $f/g\ra 0$ as $x\ra x_0$. 
\par
An algebraic variety is meant to be a reduced, separated scheme of
finite type, and most of those occurring will be affine. For $V/\Fp_q$
an algebraic variety over a finite field, $\nu\geq 1$ and $t\in
V(\Fp_{q^{\nu}})$, we write $\frob_{q^{\nu},t}$ for the geometric
Frobenius conjugacy class at $t$ relative to the field
$\Fp_{q^{\nu}}$; when $\nu$ is fixed, we simply write $\frob_t$. For a
field $k$, we write $\bar{k}$ for an algebraic closure of $k$, and for
an algebraic variety $X$ over $k$, we write $\bar{X}$ for $X\times_k
\bar{k}$, and we denote by $\eta_X$ a geometric $\bar{k}$-valued point
of $X$; whenever morphisms between fundamental groups are mentioned,
the geometric points are assumed to be chosen in compatible fashion.

\section{An algebraic criterion for independence}
\label{sec-algebraic}

Let $g\geq 1$ be a fixed integer, and let $W_{2g}$ be the finite group
of order $2^gg!$ which is described (up to isomorphism) by any of the
following equivalent definitions:
\par
-- it is the group of permutations of a finite set $M$ of order $2g$
which commute with a given involution $c$ on $M$ without fixed points:
$$
\sigma(c(\alpha))=c(\sigma(\alpha))\quad\text{for all $\alpha\in M$}\ ;
$$
we write usually $c(\alpha)=\bar{\alpha}$, so that
$\overline{\sigma(\alpha)}=\sigma(\bar{\alpha})$. 
\par
-- given a set $M$ with $2g$ elements which is partitioned in a set
$N$ of $g$ couples $\{x,y\}$, $W_{2g}$ is the subgroup of the group of
permutations of $M$ which permute the set of pairs $N$; as an example,
we can take
$$
M=\{-g,\ldots, -1,1,\ldots ,g\}\subset \Zz
$$ 
with the pairs $\{-i,i\}$ for $1\leq i\leq g$, and then the condition
for a permutation $\sigma$ of $M$ to be in $W_{2g}$ is that
$$
\sigma(-i)=-\sigma(i),\quad\text{ for all $i$, $1\leq i\leq g$}.
$$
\par
-- it is the semi-direct product $\mathfrak{S}_g\ltimes \{\pm 1\}^g$
where $\mathfrak{S}_g$ acts on $\{\pm 1\}^g$ by permuting the
coordinates.
\par
-- it is the subgroup of $GL(g,\Qq)$ of matrices with entries in
$\{-1,0,1\}$, where one entry exactly in each row and column is
non-zero.\footnote{\ As explained in~\cite{bdeps}, except for seven
  values of $g$, this is in fact a finite subgroup of $GL(g,\Qq)$ with
  maximal order.}
\par
-- finally, it is the Weyl group of the symplectic group $Sp(2g)$,
i.e, the quotient $N(T)/T$ where $T\subset Sp(2g)$ is a maximal torus
(although this can be seen as the ``real'' reason this group occurs in
our context, it is not at all necessary to know the details of this
definition, or how it relates to the previous ones, to understand the
rest of this paper).
\par
Note that the second definition provides a short exact sequence
\begin{equation}\label{eq-exact-seq}
1\ra \{\pm 1\}^g\ra W_{2g}\ra \mathfrak{S}_g\ra 1.
\end{equation}
\par
We will use mostly the first two definitions, the equivalence of which
is particularly easy, indicating what is the set $M$ and/or involution
$c$ under consideration. We let $N$ be the quotient of $M$ modulo the
equivalence relation induced by $c$ (with $\alpha\sim \bar{\alpha}$;
this is the same as the set $N$ of the second definition).
\par
\medskip
\par
We now state some properties of the group $W_{2g}$, which we assume to
be given with some set $M$ and set $N$ of couples on which $W_{2g}$
acts, as in the second definition. For a given $\alpha\in M$, we write
$\bar{\alpha}$ for the unique element such that
$\{\alpha,\bar{\alpha}\}\in N$.
\par
We let $F(M)=\Qq^{M}$ be the $\Qq$-vector space generated by $M$, with
canonical basis $(f_{\alpha})_{\alpha\in M}$, and we consider $F(M)$ as
given with the associated permutation representation of $W_{2g}$.

\begin{lemma}\label{lm-one-w}
  Let $g\geq 2$ be any integer, $W_{2g}$, $M$, $N$ and $F(M)$ as
  before. Then
\par
\emph{(1)} The group $W_{2g}$ acts transitively on $M$, and acts on
$M\times M$ with three orbits:
\begin{gather*}
  \Delta=\{(\alpha,\alpha)\,\mid\, \alpha\in M\},\quad\quad
  \Delta_c=\{(\alpha,\bar{\alpha})\,\mid\, \alpha\in M\},\\
  O=\{(\alpha,\beta)\,\mid\, \alpha\not=\beta,
  \ \bar{\alpha}\not=\beta\}.
\end{gather*}
\par
\emph{(2)} The representation of $W_{2g}$ on $F(M)$ decomposes as
the direct sum
$$
F(M)=\charfun\oplus G(M)\oplus H(M)
$$
of the three subspaces defined by
\begin{gather*}
  \charfun=\Qq \psi\subset F(M),\text{ where } \psi=\sum_{\alpha\in
    M}{f_{\alpha}},
  \\
  G(M)=\Bigl\{\sum_{\alpha\in M}{t_{\alpha}f_{\alpha}}\in F(M)\,\mid\,
  t_{\alpha}-t_{\bar{\alpha}}=0,\ \alpha\in M,\text{ and }
  \sum_{\alpha\in M}{t_{\alpha}}=0 \Bigr\},\\
  H(M)=\Bigl\{\sum_{\alpha\in M}{t_{\alpha}f_{\alpha}}\in F(M)\,\mid\,
   t_{\alpha}+t_{\bar{\alpha}}=0,\ \alpha\in M\Bigr\},
\end{gather*}
which are absolutely irreducible representations of $W_{2g}$.
\end{lemma}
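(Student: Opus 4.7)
My plan is to handle (1) by direct orbit analysis, and then feed the orbit count of (1) into (2) via the classical identity relating the dimension of the endomorphism algebra of a permutation representation to the number of orbits on the square of the underlying set.

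For (1), using the second description of $W_{2g}$ as pair-preserving permutations of $M$, transitivity on $M$ is immediate: if $\alpha,\beta$ lie in the same pair, conjugation by the involution inside that pair sends $\alpha$ to $\beta$; otherwise one swaps the two pairs $\{\alpha,\bar\alpha\}$ and $\{\beta,\bar\beta\}$ matching $\alpha$ with $\beta$. Both $\Delta$ and $\Delta_c$ are visibly $W_{2g}$-stable (the latter because each $\sigma\in W_{2g}$ satisfies $\sigma c=c\sigma$ by the first definition), and transitivity on them follows from transitivity on $M$ together with the identity $\overline{\sigma(\alpha)}=\sigma(\bar\alpha)$. For $O$, given $(\alpha,\beta)$ and $(\alpha',\beta')$ in $O$, the defining condition $\bar\alpha\neq\beta$ means the pairs $\{\alpha,\bar\alpha\}$ and $\{\beta,\bar\beta\}$ are distinct, and similarly on the primed side; one builds $\sigma$ by first choosing a permutation of the set $N$ of pairs that sends $\{\alpha,\bar\alpha\}$ to $\{\alpha',\bar{\alpha'}\}$ and $\{\beta,\bar\beta\}$ to $\{\beta',\bar{\beta'}\}$, then composing with the involutions inside each pair to align the specific representatives. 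A cardinality check $|\Delta|+|\Delta_c|+|O|=2g+2g+2g(2g-2)=4g^2$ confirms these three $W_{2g}$-stable sets partition $M\times M$.

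For (2), I first check that each of $\charfun$, $G(M)$, $H(M)$ is $W_{2g}$-stable. Writing $\sigma\cdot\sum t_\alpha f_\alpha=\sum t_{\sigma^{-1}(\beta)}f_\beta$, the relation $\sigma^{-1}(\bar\beta)=\overline{\sigma^{-1}(\beta)}$ shows that the conditions $t_\alpha=\pm t_{\bar\alpha}$ are preserved, and $\sum t_\alpha$ is clearly invariant. A direct dimension count gives $1+(g-1)+g=2g=\dim F(M)$, and the pairwise intersections are trivial: $\charfun\cap G(M)=0$ by the sum-zero condition, $\charfun\cap H(M)=0$ from summing the signed relation, and $G(M)\cap H(M)=0$ because $t_\alpha=t_{\bar\alpha}$ and $t_\alpha=-t_{\bar\alpha}$ force $t_\alpha=0$. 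Hence $F(M)=\charfun\oplus G(M)\oplus H(M)$ as $\Qq[W_{2g}]$-modules.

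It remains to upgrade this to absolute irreducibility. The key input is the standard identity
$$
\dim_{\bar\Qq}\End_{\bar\Qq[W_{2g}]}(F(M)\otimes_{\Qq}\bar\Qq)=|W_{2g}\backslash(M\times M)|=3,
$$
whose right-hand side is supplied by (1). If $F(M)\otimes\bar\Qq\simeq\bigoplus_i V_i^{m_i}$ denotes the decomposition into pairwise non-isomorphic absolutely irreducibles, the same endomorphism algebra has dimension $\sum_i m_i^2$, forcing $\sum m_i^2=3$. Since the $\Qq$-decomposition already exhibits three nonzero $W_{2g}$-stable summands (which remain nonzero after scalar extension, being defined by $\Qq$-rational linear conditions), no summand can split further and no two can be isomorphic without violating $\sum m_i^2=3$. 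Consequently each of $\charfun$, $G(M)$, $H(M)$ is absolutely irreducible and they are mutually non-isomorphic. There is no serious obstacle; the only minor care required is verifying that the explicit $\Qq$-splitting genuinely persists over $\bar\Qq$, which is automatic from the rationality of the defining conditions.
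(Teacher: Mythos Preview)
Your proof is correct and follows essentially the same route as the paper. The orbit analysis in (1) is the same, with only cosmetic differences in how transitivity on $O$ is argued (you construct $\sigma$ in one stroke; the paper moves one coordinate at a time). For (2), your use of the identity $\dim\End_{\bar\Qq[W_{2g}]}(F(M)\otimes\bar\Qq)=|W_{2g}\backslash(M\times M)|$ together with $\sum m_i^2=3$ is exactly the paper's computation $\langle\chi,\chi\rangle=\langle\chi^2,1\rangle=3$ rephrased via Schur's lemma instead of characters.
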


\begin{proof}
  (1) The transitivity of $W_{2g}$ on $M$ is clear. Furthermore, it is
  obvious that the sets $\Delta$, $\Delta_c$, $O$ form a partition of
  $M\times M$, and that $\Delta$ is the orbit of any fixed
  $(\alpha,\alpha)\in \Delta$ by transitivity.
\par
To check that $\Delta_c$ is also an orbit, fix some
$x_0=(\alpha_0,\bar{\alpha}_0)\in \Delta_c$, and let
$x=(\alpha,\bar{\alpha})\in \Delta_c$ be arbitrary. If $\sigma$ is any
element of $W_{2g}$ such that $\sigma(\alpha_0)=\alpha$, we have
$\sigma(\bar{\alpha})=\bar{\alpha}_0$, hence $\sigma(x_0)=x$.
\par 
There remains to look at $O$. First $O\not=\emptyset$ because
$g\geq 2$ (so that there exist $(\alpha, \beta)\in M\times M$ with
$\beta\notin \{\alpha,\bar{\alpha}\}$).  Using the fact that for any
$\gamma\not=\delta$ in $M$, there exists $\sigma\in W_{2g}$ such that
$\sigma(\gamma)=\delta$ and $\sigma$ acts as identity on
$M-\{\gamma,\bar{\gamma},\delta,\bar{\delta}\}$, it is clear that if
$y=(\alpha,\beta)\in O$, then all elements of $O$ of the form
$(\alpha,\gamma)$ are in the orbit of $y$, and so are all elements of
the form $(\gamma,\beta)$.
\par
So given $y_1=(\alpha,\beta)$ and $y_2=(\gamma,\delta)\in O$, we can find
$\sigma_1$ such that $\sigma(y_1)=(\alpha,\delta)$, then $\sigma_2$
such that 
$$
\sigma_1\sigma_2(\alpha,\beta)=\sigma_2(\alpha,\delta)=(\gamma,\delta)=y_2,
$$
so $O$ is a single orbit as desired.
\par
(2) Again, it is easily checked that $\charfun$, $G(M)$ and $H(M)$ are
$W_{2g}$-invariant subspaces of $F(M)$, and it suffices to check that the
representation $F(M)\otimes \Cc$ is a direct sum of three irreducible
components. This means we must show that
$$
\langle \chi,\chi\rangle=3
$$
where $\chi$ is the character of the representation of $W_{2g}$ on
$F(M)\otimes\Cc$, as $3$ can only be written as $1+1+1$ as sum of
squares of positive integers. This is a well-known consequence of (1):
since $\chi$ is real-valued (as character of a permutation
representation), we have $\langle \chi,\chi\rangle=\langle
\chi^2,1\rangle$; further, $\chi^2$ is the character of the
permutation representation of $W_{2g}$ on $M\times M$, and hence, as
for any permutation character, the inner product $\langle
\chi^2,1\rangle$ is the number of orbits of the action of $W_{2g}$ on
$M\times M$, which we saw is equal to $3$ (for these facts, see,
e.g.,~\cite[Exercise 2.6]{serre-rep}).
\end{proof}

\begin{remark}
  The first part of the lemma says that $W_{2g}$ does not act
  doubly-transitively on $M$, but is not so far from this, the orbit
  $O$ being of much larger size than the diagonal orbit $\Delta$ and
  $\Delta_c$ (the graph of the involution $c$ on $M$): we have
  $|\Delta|=|\Delta_c|=2g$, and $|O|=4g(g-1)$).
\par
On the other hand, we have $\dim \charfun=1$, $\dim G(M)=g-1$ and $\dim
H(M)=g$. If we select one element of each of the $g$ pairs in $N$ and
number them as $(\alpha_i,\bar{\alpha}_i)$ for $1\leq i\leq g$, then
bases of $\charfun$, $G(M)$ and $H(M)$ are given, respectively, by the
vectors
\begin{gather}
\sum_{\alpha\in M}{f_{\alpha}},
\\
  (f_{\alpha_i}+f_{\bar{\alpha}_i})-
  (f_{\alpha_{i+1}}+f_{\bar{\alpha}_{i+1}}),\quad 1\leq i\leq g-1,
\label{eq-basis-gm}\\
  f_{\alpha_i}-f_{\bar{\alpha}_i},\quad 1\leq i\leq g.
\label{eq-basis-hm}
\end{gather}
Note that we also obtain from the definitions of $\charfun$ and $G(M)$
that 
\begin{equation}\label{eq-trivial-dec}
\charfun\oplus G(M)=
\Bigl\{\sum_{\alpha\in M}{t_{\alpha} f_{\alpha}}\in F(M)\,\mid\,
t_{\alpha}=t_{\bar{\alpha}},\quad \alpha\in M\Bigr\}
\end{equation}
(which is none other than $\reltriv{M}$, as defined
in~(\ref{eq-reltriv}).)
\par
In terms of ``abstract'' representation theory, the three subspaces
are not hard to identify: notice first that both $\charfun$ and $G(M)$
are invariant under the subgroup $(\Zz/2\Zz)^g$ in the exact
sequence~(\ref{eq-exact-seq}), hence are representations of the
quotient $\mathfrak{S}_g$. It is clear that their direct sum is simply
the standard permutation representation of the symmetric group.  As
for $H(M)$, looking at the action on the basis~(\ref{eq-basis-hm}), one
finds that it is isomorphic to the representation given by the
embedding $W_{2g}\injecte GL(g,\Qq)$ of the last definition of
$W_{2g}$ (in particular, it is faithful).
\end{remark}

\begin{corollary}\label{cor-many-w}
  Let $k\geq 1$ be an integer and $W=W_{2g}\times\cdots\times W_{2g}$,
  the product of $k$ copies of $W_{2g}$, the $j$-th copy acting on
  $M_j$.  Consider the action of $W$ on the disjoint union
$$
M=\bigsqcup_{1\leq j\leq k}{M_j}
$$
where the $j$-th factor acts trivially on $M_i$ for $i\not=j$.  Let
$F(M)$ denote the permutation representation of $W$ on the $\Qq$-vector
space $\Qq^{M}$ of dimension $2kg$. Then $F(M)$ is $\Qq$-isomorphic to
the direct sum
$$
F(M)\simeq k\cdot \charfun\oplus \bigoplus_{1\leq j\leq k}{G_{j}}
\oplus \bigoplus_{1\leq j\leq k}{H_j}
$$
of geometrically irreducible representations of $W$, where $G_j$ is
the representation $G(M_j)$ of the previous lemma, $(\sigma_1,\ldots,
\sigma_k)$ acting as $\sigma_j$, and similarly $H_j$ is $H(M_j)$
acting through the $j$-th factor $W_{2g}$.
\end{corollary}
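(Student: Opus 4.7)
The plan is to reduce the statement to Lemma~\ref{lm-one-w} applied to each factor $M_j$ separately. First I would observe that because $W$ acts on the disjoint union $M=\bigsqcup_j M_j$ with the $j$-th factor $W_{2g}$ acting only on $M_j$ (and trivially on the other $M_i$), the permutation representation splits naturally as
$$
F(M)=\bigoplus_{1\leq j\leq k}F(M_j),
$$
where $F(M_j)=\Qq^{M_j}$ is viewed as a representation of $W$ through the $j$-th projection $W\surjecte W_{2g}$. Applying Lemma~\ref{lm-one-w}(2) to each factor then gives
$$
F(M_j)=\charfun_j\oplus G(M_j)\oplus H(M_j),
$$
where $\charfun_j$ is the trivial line spanned by $\sum_{\alpha\in M_j}f_\alpha$. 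Summing the trivial lines produces the $k\cdot\charfun$ component, and the other summands give the $G_j$ and $H_j$ in the statement.

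The only substantive point is to verify that pulling back along the projection $W\surjecte W_{2g}$ preserves (absolute) irreducibility, i.e.\ that $G_j$ and $H_j$ remain absolutely irreducible as representations of the full product $W$. For this I would invoke Schur's lemma: since $G(M_j)$ and $H(M_j)$ are absolutely irreducible as $W_{2g}$-representations, their endomorphism rings over $\bar{\Qq}$ reduce to scalars, and the $W$-endomorphism ring can only be smaller; since it already contains the scalars, it equals $\bar{\Qq}$, whence absolute irreducibility over $W$. Alternatively, one can compute $\langle\chi,\chi\rangle_W$ using the fact that the character of the pullback is obtained by composing with the projection, and the normalized inner product is preserved.

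Finally, the decomposition stated is $\Qq$-isomorphic (not merely isomorphic after extending scalars) because each of the three subspaces $\charfun_j$, $G(M_j)$, $H(M_j)$ of $F(M_j)$ given in Lemma~\ref{lm-one-w} is already defined over $\Qq$, so their images in $F(M)$ give an honest $\Qq$-decomposition into geometrically irreducible summands. No serious obstacle is expected; the argument is essentially bookkeeping on top of the one-factor case already established in Lemma~\ref{lm-one-w}.
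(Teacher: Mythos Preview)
Your proposal is correct and follows exactly the approach the paper intends: the paper's own proof is the single sentence ``This is clear from Lemma~\ref{lm-one-w} and the definition of $M$,'' and you have simply spelled out the bookkeeping behind that sentence. The extra justification you give for preservation of absolute irreducibility under pullback along the surjection $W\surjecte W_{2g}$ is more than the paper bothers to record, but it is correct (indeed, since the projection is surjective the $W$-endomorphism ring equals the $W_{2g}$-endomorphism ring, not merely is contained in it).
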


\begin{proof}
This is clear from Lemma~\ref{lm-one-w} and the definition of $M$.
\end{proof}

Continuing with an integer $k\geq 1$, we now assume that we have
polynomials $P_1$, \ldots, $P_k$ with coefficients in a field
$E\subset \Cc$ such that each of the splitting fields $K_i/E$ of $P_i$
has Galois group isomorphic to $W_{2g}$, acting by permutation on the
set $M_j$ of roots of $P_j$, and which are jointly linearly
independent so that the splitting field $K/E$ of the product
$$
P=P_1\cdots P_k\in E[X]
$$
has Galois group naturally isomorphic to $W=W_{2g}^k$. Note that this
implies in particular that the sets of roots of the polynomials $P_j$
are disjoint. Then the disjoint union $M$ of
Corollary~\ref{cor-many-w} can be identified with the set of all roots
of $P$.
\par
We have the $\Qq$-vector space $\vect{M}\subset \Cc$ generated by the
set of roots of $P$, and the multiplicative abelian group
$\mult{M}\subset \Cc^{\times}$, from which we may construct the
$\Qq$-vector space $\mult{M}\otimes_{\Zz}\Qq$. Using the Galois action
by permutation of the roots, those two vector spaces are themselves
representations of $W$, and moreover mapping each element of the
canonical basis of $F(M)=\Qq^{M}$ to the corresponding root, we have
natural $\Qq$-linear maps
$$
F(M)=\Qq^{M}\fleche{r_a} \vect{M},\quad\quad
F(M)=\Qq^{M}\fleche{r_m} \mult{M}\otimes \Qq,
$$
which are also maps of $W$-representations. By construction, we have
$$
\ker(r_a)=\reladd{M},\quad \ker(r_m)=\relmul{M}\otimes \Qq,
$$
where $\reladd{M}$ and $\relmul{M}$ are the relation groups defined
in~(\ref{eq-def-reladd}) and~(\ref{eq-def-relmul}).
Note that both $\reladd{M}$ and $\relmul{M}\otimes \Qq$ are
subrepresentations of the permutation representation $F(M)$.
\par
Thus we see that the problem of finding the possible relations among
roots of a polynomial is transformed into a problem of representation
theory (in the multiplicative case, one must also handle the possible
loss of information in taking the tensor product with $\Qq$: for
instance, $\relmul{-1}=2\Zz\subset \Zz$, and $1\in
\relmul{-1}\otimes\Qq$ although $(-1)^1\not=1$...).  This is in
essence Girstmair's method, see e.g.~\cite{girstmair} (notice that
there is nothing special in working with $W$-extensions in the above).
Since Corollary~\ref{cor-many-w} has described explicitly the
decomposition of $F(M)$ as sum of irreducible representations of $W$,
the theory of linear representations of finite groups shows that there
are very few possibilities for the subrepresentations $\reladd{M}$ and
$\relmul{M}\otimes \Qq$.

\begin{proposition}\label{pr-w2g-k}
  Let $k\geq 1$ and $g\geq 2$ be integers. Let $P_1$, \ldots, $P_k$ be
  polynomials satisfying the conditions above. With notation as above,
  in particular $P=P_1\cdots P_k$ and $M$ the set of zeros of $P$,
  assume in addition that for any pair of roots
  $(\alpha,\bar{\alpha})$, we have
  $\alpha\bar{\alpha}\in\Qq^{\times}$.
\par
\emph{(1)} We have
$$
\reladd{M}=\bigoplus_{1\leq j\leq k}{\reladd{M_j}},
$$
and for each $j$, we have either $\reladd{M_j}=0$, or
$\reladd{M_j}=\charfun$. The latter alternative holds if and only if
$$
\sum_{\alpha\in M_j}{\alpha}=0
$$
or equivalently if $\Tr_{K/E}(\alpha)=0$ for any $\alpha\in M_j$.
\par
\emph{(2)} We have
$$
\relmul{M}\otimes \Qq=\bigoplus_{1\leq j\leq k}{\relmul{M_j}\otimes\Qq}.
$$
Moreover, assume  that the rational number
$\alpha\bar{\alpha}\in\Qq$ is positive and independent of $\alpha$,
say equal to $m$. Then for $g\geq 5$ in the general case, and for
$g\geq 2$ if $m=1$, we have for each $j$ that
$$
\relmul{M_j}\otimes \Qq=
\begin{cases}
\charfun\oplus G(M_j)&\text{ if } m=1,\\
G(M_j)&\text{ otherwise.}
\end{cases}
$$
\end{proposition}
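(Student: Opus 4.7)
The strategy is representation-theoretic. Corollary~\ref{cor-many-w} gives the decomposition
$$
F(M)=k\cdot\charfun\oplus\bigoplus_{j=1}^k G_j\oplus\bigoplus_{j=1}^k H_j
$$
into pairwise non-isomorphic irreducible $W$-representations ($\charfun$ with multiplicity $k$, each $G_j$ and $H_j$ with multiplicity one). Since both $\reladd{M}=\ker(r_a)$ and $\relmul{M}\otimes\Qq=\ker(r_m)$ are $W$-subrepresentations of $F(M)$, each must be the direct sum of some subspace of the $\charfun$-isotypic $\bigoplus_j\Qq\psi_j$ together with some subset of the $G_j$'s and $H_j$'s. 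The plan is to decide which irreducible constituents belong to each kernel by testing $r_a$ or $r_m$ on one explicit basis vector per irreducible using~(\ref{eq-basis-gm}) and~(\ref{eq-basis-hm}).

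For part~(1), the basis vector $f_{\alpha_i}-f_{\bar\alpha_i}\in H(M_j)$ has image $\alpha_i-\bar\alpha_i\ne 0$ under $r_a$, so by irreducibility $H(M_j)\cap\reladd{M}=0$. For $G(M_j)$, if the basis vector $(f_{\alpha_i}+f_{\bar\alpha_i})-(f_{\alpha_{i+1}}+f_{\bar\alpha_{i+1}})$ were to vanish for every $i$, all pair-traces $\alpha+\bar\alpha$ in $M_j$ would coincide; combined with $\alpha\bar\alpha\in\Qq$, the fixed subfield $K_j^{\{\pm1\}^g}$ would be generated over $E$ by a single element, contradicting $[K_j^{\{\pm1\}^g}:E]=g!\ge2$. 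Hence $\reladd{M}\subset\bigoplus_j\Qq\psi_j$, and since $r_a(\psi_j)=\sum_{\alpha\in M_j}\alpha$ coincides with $-a_{2g-1}(P_j)$, this last value decides whether $\reladd{M_j}$ equals $\charfun$ or $0$, which yields the direct-sum decomposition.

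For part~(2), the analogous procedure shows first that $G(M_j)\subset\ker(r_m)$ holds unconditionally, since its basis vector maps to $\alpha_i\bar\alpha_i(\alpha_{i+1}\bar\alpha_{i+1})^{-1}=m/m=1$. Next, $\charfun\subset\ker(r_m)$ precisely when $r_m(\psi_j)=\prod_{\alpha\in M_j}\alpha=m^g$ is torsion in $\Cc^\times$, i.e.\ when $m=1$. The main obstacle is ruling out $H(M_j)\subset\ker(r_m)$: if this held, every ratio $\alpha_i/\bar\alpha_i$ would be a root of unity $\zeta_i$, giving $\alpha_i^2=m\zeta_i$, so every root of $P_j$ would lie in the abelian extension $\Qq(\sqrt m,\mmu_\infty)$; the splitting field $K_j$ would then embed into an extension abelian over $E(\sqrt m)$, contradicting the non-abelianness of $\Gal(K_j/E)\cong W_{2g}$ for $g\ge2$. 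For $m=1$ this contradiction is immediate and yields the threshold $g\ge2$; the additional $\sqrt m$-twist and the resulting need to compare the abelian quotients of $W_{2g}$ with those available inside $\Gal(\Qq(\sqrt m,\mmu_\infty)/\Qq)$ is the delicate group-theoretic step that pushes the threshold up to $g\ge5$ in the general case.
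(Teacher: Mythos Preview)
Your overall strategy matches the paper's exactly: decompose $F(M)$ into irreducibles via Corollary~\ref{cor-many-w} and test each summand against $\ker(r_a)$ or $\ker(r_m)$ using the explicit basis vectors. Two steps, however, are not quite closed.

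\textbf{Part (1), excluding $G(M_j)$.} Your conclusion ``the fixed subfield $K_j^{\{\pm1\}^g}$ would be generated over $E$ by a single element, contradicting $[K_j^{\{\pm1\}^g}:E]=g!\ge2$'' is not a contradiction as stated: a degree-$g!$ extension can certainly be generated by one element. What you need is that this common pair-trace $t=\alpha+\bar\alpha$ is \emph{fixed by all of $W_{2g}$} (since $W_{2g}$ permutes the pairs transitively), hence $t\in E$; together with $\alpha\bar\alpha\in\Qq\subset E$ this forces the fixed field to equal $E$, which does contradict $g!\ge 2$. The paper argues slightly differently: from $\alpha+\bar\alpha\in E$ and $\alpha\bar\alpha\in\Qq^\times$ it concludes $[E(\alpha):E]\le 2$, contradicting $[E(\alpha):E]=2g$ for $g\ge 2$. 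Either route works once you observe $t\in E$.

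\textbf{Part (2), excluding $H(M_j)$ for general $m$.} You correctly obtain $\alpha_i^2=m\zeta_i$ and hence $K_j\subset E(\sqrt m,\mmu_\infty)$, but then you claim this ``contradicts the non-abelianness of $\Gal(K_j/E)$'' while only having abelianness over $E(\sqrt m)$; in the next sentence you concede the general case is a ``delicate group-theoretic step'' and leave it undone. The paper's completion is short and not delicate: since every $\alpha$ satisfies $\alpha^{2n}=m^n$ for a common $n$, the splitting field $K_j$ lies in the Kummer extension $E(\sqrt m,\mmu_{2n})$, so $\Gal(K_j/E)$ is \emph{solvable}; but $W_{2g}$ has the alternating group $A_g$ as a composition factor (via~(\ref{eq-exact-seq})), and $A_g$ is non-solvable for $g\ge 5$. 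For $m=1$ one gets $\alpha^{2n}=1$, so $K_j$ is cyclotomic over $E$ and the Galois group is abelian, giving the threshold $g\ge 2$ directly. No comparison of ``abelian quotients'' is required.
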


\begin{proof}
(1) From representation theory, we know that $\reladd{M}$ is the direct
  sum of some subset of the irreducible components of $F(M)$
  corresponding to the decomposition in
  Corollary~\ref{cor-many-w}. This isomorphism shows that $F(M)$
  decomposes as a direct sum over $j$ of representations $F(M_j)$
  depending on the $j$-th factor of $W$, each of which is given by
  Lemma~\ref{lm-one-w}. Accordingly, $\reladd{M}$ is the direct sum
  over $j$ of subrepresentations of $F(M_j)$. Those are
  representations of the $j$-th factor $W_{2g}$ extended by the
  identity to $W$, and tautologically, they correspond exactly to the
  relation space $\reladd{M_j}$ among zeros of $P_j$.
\par
To finish the proof of (1), it suffices therefore to treat each $P_j$
in turn, so we might as well assume $k=1$ and remove the subscript
$j$, using notation in Lemma~\ref{lm-one-w} (in particular, writing
now $M$ instead of $M_j$).  Noting that, for any $\alpha\in M$, the
relation $\Tr_{K/E}(\alpha)=0$ is equivalent with $\charfun\subset
\reladd{M}$, the claim then amounts to saying that $G(M)$ and $H(M)$ can
not occur in $\reladd{M}$.
\par
First, $G(M)\subset \reladd{M}$ means that 
\begin{equation}\label{eq-rel}
\sum_{\alpha}{t_{\alpha}\alpha}=0
\end{equation}
whenever $(t_{\alpha})\in\Qq^{M}$ sum to zero and satisfy
$t_{\alpha}-t_{\bar{\alpha}}=0$ for $\alpha\in M$. In particular,
fix a root $\alpha$ of $P$; we find that for any $\sigma\in W_{2g}$
with $\sigma(\alpha)\not=\alpha$, say $\sigma(\alpha)=\beta$, we have
$$
(\alpha+\bar{\alpha})-
(\beta+\bar{\beta})=
(\alpha+\bar{\alpha})-
\sigma(\alpha+\bar{\alpha})=0
$$
for all $\sigma\in W_{2g}=\Gal(K/\Qq)$ not fixing $\alpha$. Since the
last relation is trivially valid for $\sigma$ fixing $\alpha$ (hence
$\bar{\alpha}$), it follows that $\alpha+\bar{\alpha}\in \Qq$. From
the assumption $\alpha\bar{\alpha}\in\Qq^{\times}$, it follows that
$\Qq(\alpha)$ is a quadratic field. It must be the splitting field $K$
of the polynomial $P$, and hence this can not occur under the
conditions $g\geq 2$ and $\Gal(K/\Qq)=W_{2g}$.
\par
Similarly $H(M)\subset \reladd{M}$ means that~(\ref{eq-rel}) holds
whenever $(t_{\alpha})\in\Qq^{M}$ satisfy
$t_{\alpha}+t_{\bar{\alpha}}=0$. Using again a fixed root $\alpha$ of
$P$, we obtain in particular
\begin{equation}\label{eq-cons-hm}
\alpha-\bar{\alpha}=0
\end{equation}
which contradicts the fact that the elements $\alpha$ and
$\bar{\alpha}$ are distinct.
\par
(2) The proof of the direct sum decomposition for
$\relmul{M}\otimes\Qq$ is the same as that for additive relations, and
hence we are again reduced to the case $k=1$ (and we write $M$ instead
of $M_j$). We first show that $G(M)\subset \relmul{M}\otimes\Qq$ in all
cases. Indeed, considering the generators~(\ref{eq-basis-gm}) of
$G(M)$, it suffices to show that
$$
\frac{\alpha\bar{\alpha}}{\beta\bar{\beta}}=1
$$
for all $\alpha$ and $\beta$, and this is correct from our assumption
that $\alpha\bar{\alpha}$ is independent of $\alpha$. (Note the tensor
product with $\Qq$ means this is not \emph{equivalent} with
$G(M)\subset \relmul{M}\otimes\Qq$).
\par
Now we consider the consequences of the possible inclusion of the
subrepresentations $\charfun$, and $H(M)$ in $\relmul{M}\otimes\Qq$.
First, $\charfun\subset \relmul{M}\otimes\Qq$ means exactly that for
some integer $n\geq 1$, we have
$$
n\psi=\sum_{\alpha\in M}{n f_{\alpha}}\in \relmul{M},
$$
which is equivalent with
$$
\prod_{\alpha\in M}{\alpha^n}=\Bigl(\prod_{\alpha\in
  M}{\alpha}\Bigr)^n=(N_{K/E}(\alpha))^n=1,
$$
or in other words, $N_{K/E}(\alpha)$ is a root of unity. But the
assumption that $\alpha\bar{\alpha}=m$ be a positive rational number
independent of $\alpha$ implies that $N_{K/E}(\alpha)=m^g$, so
$\charfun\subset \relmul{M}\otimes\Qq$ if and only if $m=1$.
\par
It remains to exclude the possibility that $H(M)\subset
\relmul{M}\otimes\Qq$ to conclude the proof.  But instead
of~(\ref{eq-cons-hm}), this possibility implies now that, for some
integer $n\geq 1$, we have
$$
\alpha^{2n}=m^n\Bigl(\frac{\alpha^{2n}}{m^n}\Bigr)
=m^n\Bigl(\frac{\alpha}{\bar{\alpha}}\Bigr)^n=m^n.
$$
\par
Hence $K/\Qq$ would be the Kummer extension
$\Qq(\sqrt{m},\uple{\mu}_{2n})$, where $\uple{\mu}_{2n}$ is the group
of $2n$-th roots of unity. In particular, the Galois group of $K/E$
would be solvable, which is false for $W_{2g}$ if $g\geq 5$ (the
non-solvable group $A_g$ occurs as one composition factor). For $m=1$,
the Galois group would be abelian, which is not the case of $W_{2g}$
for $g\geq 2$.
\end{proof}

\begin{remark}
  Since there exist elements with trace zero generating a given number
  field, both cases of the alternative in (1) can occur. It should be
  clear however that $\reladd{M_j}=0$ is the ``most likely'', and we
  will see this at work in Section~\ref{sec-sieve}.
\end{remark}


\begin{remark}
  In~\cite[Prop. 2.1, Remark 2.2]{weilnbs}, we had proved for
  different purposes and using quite different methods a result which
  implied, as we remarked, that if the splitting field of the
  $L$-function of a curve $C/\Fp_q$ is $W_{2g}$, and if in addition
  the curve were ordinary (which can be interpreted as saying that the
  coefficient of $T^g$ of $P_C$ is not divisible by $p$), then the
  multiplicative group $\mult{\zeros(C)}$ is free of rank $g+1$. This
  is almost the same as the case $k=1$ of Proposition~\ref{pr-w2g-k},
  but it would be very inconvenient below to have to assume
  ordinarity. As explained by Milne~\cite[2.7]{milne}, the freeness of
  the group generated by the inverse roots also has consequences for
  the Tate conjecture.
\end{remark}


\begin{remark}
Since this may be useful in other investigations, we quote the
analogue of Proposition~\ref{pr-w2g-k} when $W_{2g}$ is replaced by
the symmetric group $\mathfrak{S}_n$, $n\geq 2$. The proof is easier
than the previous one (because the natural action of $\mathfrak{S}_n$
on sets of order $n$ is doubly transitive), and in fact is contained
in the works of Girstmair.
\par
\begin{proposition}
  Let $k\geq 1$ and $n\geq 2$ be integers. Let $P_1$, \ldots, $P_k$ be
  polynomials with rational coefficients of degree $n$ such that
  $P=P_1\cdots P_k$ has splitting field $K$ with Galois group
  $\mathfrak{S}_n^k$. Let $M$ be the set of complex roots of $P$,
  $M_j$ that of $P_j$.
\par
\emph{(1)} We have
$$
\reladd{M}=\bigoplus_{1\leq j\leq k}{\reladd{M_j}},
$$
and for each $j$, we have either $\reladd{M_j}=0$, or
$$
\reladd{M_j}=\charfun=\Qq\cdot \sum_{\alpha\in M_j}{\alpha},
$$
and the latter alternative holds if and only if, for any $\alpha\in
M_j$, we have $\Tr_{K/\Qq}(\alpha)=0$.
\par
\emph{(2)} We have
$$
\relmul{M}=\bigoplus_{1\leq j\leq k}{\relmul{M_j}},
$$
and for each $j$, $\relmul{M_j}$ is one of the following:
$$
0\ ,\quad\quad
m_j\Zz \cdot \sum_{\alpha\in M_j}{\alpha}\quad\quad
n_j\Zz^{M_j}\ ,\quad\quad
m'_j\cdot \Bigl\{
(n_{\alpha})\,\mid\, \sum_{\alpha}{n_{\alpha}}=0
\Bigr\},
$$
where $m_j\in \{1,2\}$, $n_j\in \{3,4,6\}$, $m'_j\in \{2,3\}$. The
third case holds when $M_j$ is the set of roots of unity of order
$n_j$.  The second case holds when the third one doesn't and
$N_{K/\Qq}(\alpha)=(-1)^{m_j-1}$, i.e., when the $\alpha\in M_j$ are
units, not roots of unity. The fourth case occurs when the two
previous do not, and $\alpha$ satisfies a Kummer equation
$\alpha^{m'_j}=\beta\in\Qq^{\times}$, $\beta$ not an $m'_j$-th power
of an integer.
\end{proposition}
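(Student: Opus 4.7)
The strategy is identical to that of Proposition~\ref{pr-w2g-k}, with the crucial simplification that the natural action of $\mathfrak{S}_n$ on a set of order $n$ is doubly transitive. Consequently, only two orbits appear on $M_j \times M_j$ (the diagonal and its complement), so $F(M_j) = \Qq^{M_j}$ decomposes as a direct sum of just two absolutely irreducible subrepresentations: the trivial line $\charfun$ and the standard $(n-1)$-dimensional representation $V$. The product structure of $\Gal(K/\Qq) = \mathfrak{S}_n^k$ acting on $M = \bigsqcup M_j$ (each factor acting trivially on the other $M_i$) yields at once a direct sum decomposition of $F(M)$ into representations indexed by $j$, exactly as in Corollary~\ref{cor-many-w}. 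Both $\reladd{M}$ and $\relmul{M}\otimes\Qq$ are subrepresentations of $F(M)$, so they split accordingly, reducing both statements to the case $k=1$, where I would drop the subscript $j$.

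For part~(1), any $\mathfrak{S}_n$-subrepresentation of $F(M)$ is one of $0$, $\charfun$, $V$, or $F(M)$. The inclusion $V \subset \reladd{M}$ would force $\alpha-\beta=0$ for distinct roots $\alpha,\beta$, which is absurd; the inclusion $F(M) \subset \reladd{M}$ would force every root to vanish. So $\reladd{M}$ is $0$ or $\charfun$, and the latter alternative is tautologically equivalent to $\sum_{\alpha \in M}\alpha = 0$, i.e. $\Tr_{K/\Qq}(\alpha)=0$ for any $\alpha \in M$.

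For part~(2), the space $\relmul{M}\otimes\Qq$ is likewise one of the four subrepresentations above; I would read off the multiplicative content of each. The inclusion $\charfun \subset \relmul{M}\otimes\Qq$ amounts to $N_{K/\Qq}(\alpha)^m = 1$ for some $m\geq 1$, and since the norm lies in $\Qq^\times$ this forces $N_{K/\Qq}(\alpha) \in \{\pm 1\}$, whence $m_j \in \{1,2\}$ according to the sign; this is the second case. The inclusion $V \subset \relmul{M}\otimes\Qq$ means $(\alpha/\beta)^{m'}=1$ for some $m'\geq 1$ and all roots, so $\alpha^{m'} \in \Qq^\times$; the splitting field then lies in the Kummer extension $\Qq(\alpha,\mu_{m'})$, whose Galois group is solvable, so $\mathfrak{S}_n$ is solvable (forcing $n\leq 4$), and a direct inspection of the small cases leaves only $m'_j \in \{2,3\}$. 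Finally, $\relmul{M}\otimes\Qq = F(M)$ forces every root to be a root of unity, so $K$ is cyclotomic and therefore $\mathfrak{S}_n$ is abelian, forcing $n=2$ and $n_j \in \{3,4,6\}$ (the only integers with $\phi(n_j)=2$).

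The main obstacle, exactly as in Proposition~\ref{pr-w2g-k}, is to descend from $\relmul{M}\otimes\Qq$ to the precise integer lattice $\relmul{M} \subset \Zz^{M}$: one has to pin down the minimal positive integer ($m_j$, $n_j$, or $m'_j$) actually generating the relation, ruling out any finer refinement. This is handled case by case using the constraints already collected (in particular, that the only roots of unity in $\Qq^\times$ are $\pm 1$, which is precisely what caps $m_j$ and $m'_j$ at small values), yielding the four exhaustive possibilities listed in the statement.
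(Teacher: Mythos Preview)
The paper does not actually prove this proposition: it only remarks that the argument is easier than that of Proposition~\ref{pr-w2g-k} because $\mathfrak{S}_n$ acts doubly transitively on $n$ letters, and refers to Girstmair's work for details. Your sketch follows precisely this indicated route---reducing to $k=1$ via the product decomposition, using double transitivity to get $F(M_j)=\charfun\oplus V$ with only two irreducible pieces, and then reading off the additive and multiplicative constraints case by case---so it matches the paper's intended approach and is correct at the level of detail the paper itself provides.
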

\end{remark}

\section{The simplest case: proof of Proposition~\ref{pr-1} }
\label{sec-simple}

We start with a proof of Proposition~\ref{pr-1}, although it is
subsumed in Theorem~\ref{th-2}, because we can quote directly from
earlier results of the author on Galois groups of splitting fields of
numerators of the zeta functions in those families of curves (we
recall also that the first qualitative result on this topic is due to
Chavdarov~\cite{chavdarov}).  This means we can avoid setting up anew
the general sieve for Frobenius, and in particular we not need to
refer explicitly to the fairly sophisticated algebraic geometry which
is involved.
\par
Consider then a squarefree monic polynomial $f\in\Zz[X]$ of degree
$2g$ and an odd prime $p$ not dividing the discriminant of $f$. Let
$q\not=1$ be a power of $p$. For each $t\in \Fp_q$ with $f(t)\not=0$,
we consider the (smooth projective model of the) hyperelliptic curve
$$
C_t\,:\, y^2=f(x)(x-t),
$$
which is of genus $g$ so that the $L$-function $P_t\in \Zz[T]$ of
$C_t$, as defined in the introduction, has degree $2g$. 
\par
For a fixed $q$, we say that $t\in \Fp_q$ is \emph{special} if any one 
of the following condition holds:
\par
-- We have $f(t)=0$.
\par
-- The Galois group of the splitting field of $P_t$ is not isomorphic
to $W_{2g}$ (which is the largest it can be because of the functional
equation of the zeta function).
\par
-- The sum of the inverse roots $\alpha\in\zeros(C_t)$ is $0$.
\par
Then, under the assumptions stated, it follows from Theorem 8.1
in~\cite{lsieve} (see also~\cite[Th. 6.2]{k1}) that
$$
|\{t\in \Fp_q\,\mid\, t\text{ is special}\}|
\ll q^{1-\gamma^{-1}}(\log q),
$$
where $\gamma=4g^2+2g+4$ and the implied constant depends only on
$g$. More precisely, those results only deal with the first two
conditions (of which the second is of course the one which is
significant), but the simplest type of sieve (or rather uniform
Chebotarev density theorem) shows that
$$
|\{t\in \Fp_q\,\mid\,
\text{$f(t)\not=0$ and the sum of inverse roots of $P_t$ is zero}
\}|
\ll q^{1-\gamma^{-1}},
$$
simply because it is an algebraic condition on the coefficients of the
polynomial (see the proof of Theorem~\ref{th-2} for details in the
general case $k\geq 2$).
\par
Consider now any $t\in \Fp_q$  which is not special. We will show that
the roots of the zeta function of $C_t$ satisfy the two independence
conditions in Proposition~\ref{pr-1}, and this will finish the proof,
in view of the bound on the number of special parameters $t$.
\par
Because it is fixed, we drop the dependency on $t$ from the notation
from now on, unless this creates ambiguity.  The additive case is
clear from the first part of Proposition~\ref{pr-w2g-k} applied with
$k=1$, $m=q$ and
$$
P=T^{2g}P_t(T^{-1})\in\Zz[T]
$$
(which has the $\alpha\in \zeros(C_t)$ as roots), since the splitting
field $K$ of this polynomial is the same as that of $P_t$, hence its
Galois group is indeed $W_{2g}$, and the sum of the roots of $P$ is
non zero for $t$ not special, by the very definition.
\par
Now we come to the multiplicative independence of the normalized
inverse roots. Recall first that with $M=\nzeros(C_t)$, and involution
given by 
$$
\bar{\alpha}=c(\alpha)=\frac{1}{\alpha},
$$
the desired conclusion~(\ref{eq-trivial-mul}) can be rephrased as
$$
\relmul{ \nzeros(C_t) }=\{(n_{\tilde{\alpha}})\in \Zz^M\,\mid\,
n_{\tilde{\alpha}}-n_{\tilde{\alpha}^{-1}}=0\},
$$
and the left-hand side does contain the right-hand side, so only the
reverse inclusion is required.
\par
The elements of $M$ are roots of the polynomial
$$
Q_t=T^{2g}P_t(q^{-1/2}T^{-1})\in \Qq(\sqrt{q})[T],
$$
which creates a slight complication: if (as seems natural) we extend
scalars to $E=\Qq(\sqrt{q})$ to have $Q_t\in E[T]$, there is a
possibility that the Galois group of its splitting field (over $E$) is
not $W_{2g}$ anymore (e.g., when $\sqrt{q}$ is in the splitting field
of $P_t$). We deal with this by looking at the squares of the inverse
roots. 
\par
Let 
$$
M'=\{\tilde{\alpha}^2\,\mid\, \tilde{\alpha}\in M=\nzeros(C_t)\}=
\{\alpha^2/q\,\mid\, \alpha\in \zeros(C_t)\};
$$
the second expression shows that $M'\subset K=\Qq(\zeros(C_t))$, so
the field $F=\Qq(M')$ is a subfield of $K$. Its Galois group is the
group of those $\sigma\in \Gal(K/\Qq)$ which fix all $\alpha^2$ for
$\alpha\in \zeros(C_t)$, i.e., such that $\sigma(\alpha)\in
\{\alpha,-\alpha\}$ for all $\alpha$. If $\sigma\in \Gal(K/F)$ is not
the identity, there exists some $\alpha\in \zeros(C_t)$ such that
$\beta=\sigma(\alpha)$ is equal to $-\alpha$, and this leads to
$\alpha+\beta=0$, in particular to $\reladd{\zeros(C_t)}\not=0$. Since
this contradicts the previous observation that the elements of
$\zeros(C_t)$ are $\Qq$-linearly independent when $t$ is not special,
we have in fact $\Gal(K/F)=1$, and so $F=K$.
\par
We can now apply (2) of Proposition~\ref{pr-w2g-k}, with $k=m=1$ and
$P$ taken to be the polynomial with zeros $M'$, namely
$$
\prod_{\gamma\in M'}{(T-\gamma)}=
\prod_{\tilde{\alpha}\in M}{(T-\tilde{\alpha}^2)}\in
\Qq[T],
$$
with $F=K$ such that $\Gal(F/\Qq)=W_{2g}$, acting by permutation of
the set $M'$ with the involution
$$
c(\gamma)=\gamma^{-1},\quad\text{i.e.}\quad
c(\tilde{\alpha}^2)=\tilde{\alpha}^{-2}.
$$
\par
Since $\gamma c(\gamma)=1$ for all $\gamma\in M'$, we obtain
$$
\relmul{M'}\otimes_{\Zz}\Qq
=\charfun\oplus G(M')=
\{(n_{\gamma})\in \Qq^{M'}\,\mid\,
n_{\gamma}-n_{c(\gamma)}=0,\quad 
\gamma\in M'\}
$$
(see~(\ref{eq-trivial-dec})). 
\par
Now note that since $\relmul{M'}$ is free, the natural map
$\relmul{M'}\ra \relmul{M'}\otimes\Qq$ is injective.  Note also the
tautological embedding $\relmul{M}\fleche{i} \relmul{M'}$ induced by
the map $\Zz^M\ra \Zz^{M'}$ which maps any basis vector
$f_{\tilde{\alpha}}$ of $\Zz^M$ to $f_{\tilde{\alpha}^2}\in \Zz^{M'}$.
If $m\in \relmul{M}$, we have
$$
i(m)\in \{(n_{\gamma})\in
\Qq^{M'}\,\mid\, n_{\gamma}-n_{c(\gamma)}=0,\quad
\gamma\in M'\}
$$ 
and this means that $\relmul{M}=\reltriv{M}$, as desired.

\section{Application of the sieve for Frobenius}
\label{sec-sieve}

We are now going to apply the sieve for Frobenius to produce
extensions with Galois groups $W_{2g}^k$ to which we can apply the
results of Section~\ref{sec-algebraic} to prove Theorem~\ref{th-2} and
related results.
\par
For this we need to generalize the estimate for non-maximality of the
Galois group used in the proof of Proposition~\ref{pr-1} to situations
involving $W_{2g}^k$. For this purpose, we will again use sieve, and
we first recall the main statement for completeness. We use the
version from~\cite[Ch. 8]{lsieve} (the version in~\cite{k1} would also
suffice for our purposes), in the situation of a general
higher-dimensional parameter space. However, we extend it slightly to
allow tame ramification instead of prime-to-$p$ monodromy (see the
comments following the statement for a quick explanation if this is
unfamiliar).
\par
We will mention later on the (very small) improvements that can
sometimes be derived when the parameter space is a product of curves.

\begin{theorem}\label{th-sieve}
  Let $p$ be a prime number, $q\not=1$ a power of $p$. Let $V/\Fp_q$
  be a smooth affine geometrically connected algebraic variety of
  dimension $d\geq 1$. Assume $V$ can be embedded in $\mathbf{A}^N$
  using $r$ equations of degree $\leq \delta$, and assume also
  $\bar{V}$ has a compactification for which it is the complement of a
  divisor with normal crossing so that the tame (geometric)
  fundamental group $\pionet{V}$ is defined. Let $\Lambda$ be a set of
  primes $\ell\not=p$. For each $\ell\in\Lambda$, assume given a lisse
  sheaf $\sheaf{F}_{\ell}$ of $\Fp_{\ell}$-vector spaces,
  corresponding to an homomorphism
$$
\rho_{\ell}\,:\, \pione{V}\ra GL(r,\Fp_{\ell}),
$$
which is tamely ramified, so that $\rho_{\ell}$ restricted to the
geometric fundamental group factors through the tame quotient:
$$
\pioneg{V}\ra \pionet{V}\ra GL(r,\Fp_{\ell}).
$$
\par
Let $G_{\ell}$, $G^g_{\ell}$ be the corresponding arithmetic and
geometric monodromy groups, i.e.
$$
G_{\ell}=\rho_{\ell}(\pione{V}), \quad\quad
G^g_{\ell}=\rho_{\ell}(\pioneg{V})=\rho_{\ell}(\pionet{V}),
$$
and assume that for any distinct primes $\ell$, $\ell'\in\Lambda$, the
map
\begin{equation}\label{eq-disjoint}
\pioneg{V}\ra G_{\ell}^g\times G_{\ell'}^g
\end{equation}
is onto. 
\par
Let $\gamma_0\in G_{\ell}/G_{\ell}^g$ be the element such that all
the geometric conjugacy classes $\frob_t$ map to $\gamma_0$ for $t\in
V(\Fp_q)$, as in the short exact sequence
$$
1\ra G_{\ell}^g\ra G_{\ell}\ra G_{\ell}/G_{\ell}^g\ra 1,
$$
\par
Then for any choices of subsets $\Omega_{\ell}\subset G_{\ell}$ such
that the image of $\Omega_{\ell}$ in $G_{\ell}/G_{\ell}^g$ is
$\{\gamma_0\}$, and for any $L\geq 2$, we have
\begin{equation}\label{eq-large-sieve}
|\{t\in V(\Fp_q)\,\mid\,
\rho_{\ell}(\frob_t)\notin\Omega_{\ell}\text{ for all } \ell\leq L
\}|\leq 
(q^d+CL^Aq^{d-1/2})H^{-1}
\end{equation}
where, $\pi$ running over irreducible representations of $G_{\ell}$,
we have
\begin{gather}\label{eq-constants}
H=\sum_{\stacksum{\ell\leq L}{\ell\in\Lambda}}{
    \frac{|\Omega_{\ell}|}{|G^g_{\ell}|-|\Omega_{\ell}|}
  },\\
  A\leq 1+
\max_{\ell\leq L}
\Bigl\{
2\frac{\log |G_{\ell}|}{\log\ell}+
\max_{\pi}\frac{\log \dim \pi}{\log \ell}+
\sum_{\pi}{\frac{\log \dim_{\pi}}{\log \ell}}
\Bigr\}
\leq 
1+\frac{7}{2}\max_{\ell\leq L}{\frac{\log |G_{\ell}|}{\log\ell}},
\label{eq-constantA}
\\
C=12N2^r(3+r\delta)^{N+1}.
\label{eq-constantC}
\end{gather}
\end{theorem}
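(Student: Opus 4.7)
The approach is to apply the standard large-sieve duality, powered by Deligne's Riemann Hypothesis over finite fields, and to adapt the prime-to-$p$ setup of \cite{lsieve} to the tame-ramification case. The first step is to use orthogonality on the coset $\gamma_0 G_\ell^g$ of $G_\ell^g$ inside $G_\ell$ to expand the indicator function of $\{g \in \gamma_0 G_\ell^g : g \notin \Omega_\ell\}$ into a linear combination of characters $\Tr \pi$ of irreducible representations $\pi$ of $G_\ell$. Applying the dual form of the large sieve inequality then reduces (\ref{eq-large-sieve}) to the bound $|S(L)| \leq \Delta H^{-1}$, where $H$ is as in (\ref{eq-constants}) and $\Delta$ is the associated large-sieve constant.

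By duality, $\Delta$ is the supremum over unit vectors $(c_{\ell,\pi})$ of
$$ \sum_{t \in V(\Fp_q)} \Big| \sum_{\ell \leq L} \sum_{\pi \neq 1} c_{\ell,\pi} \Tr\pi(\rho_\ell(\frob_t)) \Big|^2, $$
which I would expand and split into a diagonal part ($\ell=\ell'$, $\pi=\pi'$) and an off-diagonal part. The diagonal piece contributes $\leq q^d$ via the Lang--Weil estimate $|V(\Fp_q)| = q^d + O(q^{d-1/2})$. For the off-diagonal ($\ell \neq \ell'$) terms, the disjointness hypothesis (\ref{eq-disjoint}) is the crucial input: it guarantees that, for non-trivial $\pi$ and $\pi'$, the external tensor $\pi \boxtimes \pi'^{\vee}$ pulled back through $\rho_\ell \times \rho_{\ell'}$ corresponds to a geometrically irreducible, geometrically non-trivial lisse $\overline{\mathbf{Q}}_\ell$-sheaf on $V$. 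Deligne's Riemann Hypothesis (Weil~II) then yields square-root cancellation of size $O((\dim\pi)(\dim\pi') \sigma(V) q^{d-1/2})$ for a suitable complexity invariant $\sigma(V)$; summing over $\ell,\ell' \leq L$ and over $\pi,\pi'$ and using $\sum_\pi (\dim\pi)^2 = |G_\ell|$ produces the $CL^A q^{d-1/2}$ contribution, with the exponent $A$ of (\ref{eq-constantA}) arising from the trivial dimension bound $\dim\pi \leq |G_\ell|^{1/2}$.

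The main technical obstacle, and the only genuinely new point compared with the prime-to-$p$ version in \cite[Ch.~8]{lsieve}, is allowing tame (rather than prime-to-$p$) ramification. I would handle this by invoking the normal-crossings compactification hypothesis to ensure that $\pionet{V}$ is well-defined, and by using that tamely ramified $\ell$-adic sheaves have vanishing Swan conductor along the boundary divisor. The Grothendieck-Ogg-Shafarevich formula then bounds the total Betti numbers of the tensor sheaves in exactly the same form as in the étale case, so that Deligne's effective trace bound applies verbatim and propagates the constant $C$ unchanged. The remaining piece is verifying that $C = 12N 2^r (3+r\delta)^{N+1}$ is produced by a Bezout-style estimate on the sum of Betti numbers of an affine variety cut out by $r$ equations of degree $\leq \delta$ in $\mathbf{A}^N$: this is a direct quotation of the geometric input into the sieve, and I would not re-derive it but import it from \cite{lsieve}, once the tame-sheaf cohomology estimates have been checked to fit the same mould. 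Most of the remaining argument is bookkeeping on representation dimensions and trivial estimates on $|G_\ell|$ to assemble the right-hand side of (\ref{eq-large-sieve}).
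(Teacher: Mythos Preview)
Your overall architecture is exactly the paper's: quote the large-sieve machinery from \cite[Ch.~8]{lsieve} (duality, character expansion, Deligne's Weil~II for the off-diagonal, Katz-type Betti bounds for the constant $C$), and then explain why replacing ``prime-to-$p$ monodromy'' by ``tamely ramified'' does not break anything. So the plan is correct in outline.

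The one place where your proposal diverges from the paper, and where I think there is a small gap, is the mechanism you propose for the tame-ramification step. You invoke vanishing Swan conductors along the boundary and the Grothendieck--Ogg--Shafarevich formula to bound Betti numbers directly. But $V$ is allowed to have arbitrary dimension $d\geq 1$, and GOS in the form you cite is a curve statement; there is no equally clean higher-dimensional Swan-conductor formula that would slot into the argument as you describe. The paper instead isolates precisely where the prime-to-$p$ hypothesis was actually used in \cite{lsieve}: only in the \emph{multiplicativity of the Euler--Poincar\'e characteristic} under a finite Galois \'etale cover $W\to V$ (one passes to such a cover to trivialize the sheaf and then descends). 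That multiplicativity was originally quoted for covers of degree prime to $p$, but a theorem of Deligne--Lusztig (see \cite[2.6, Cor.~2.8]{illusie}) shows it holds for \emph{tamely ramified} covers in any dimension. Once you know this, the rest of the proof in \cite{lsieve} goes through verbatim. So rather than trying to rebuild the Betti-number bound from GOS-type input, you should simply locate the single appeal to ``degree prime to $p$'' in the existing argument and replace it by the Deligne--Lusztig multiplicativity for tame covers.
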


\begin{proof}
  The pieces are collected from~\cite[(8.11), Proposition
  8.7]{lsieve}, or the corresponding results in~\cite{k1} (where there
  is an extraneous factor $\kappa$ which can be removed as explained
  in~\cite{lsieve}). The only difference is the assumption that the
  sheaves are tamely ramified instead of the geometric monodromy
  groups being of order prime to $p$.  However the proof goes through
  with this weaker assumption, because the only place this was used
  was in applying the multiplicativity of the Euler-Poincar\'e
  characteristic in a finite Galois \'etale cover of degree prime to
  the characteristic. This result of Deligne and Lusztig holds for
  tamely ramified covers more generally (see~\cite[2.6, Corollaire
  2.8]{illusie}).
\end{proof}

\begin{remark}
  The generalization to tamely ramified sheaves is useful to avoid
  assuming that $p>2g+1$ when looking at families of curves to ensure
  that $Sp(2g,\Fp_{\ell})$ has order prime to $p$ (for instance, in
  Theorem~\ref{th-2}). The difference between the two is that tame
  ramification of an homomorphism $\pioneg{V}\ra
  GL(n,\Fp_{\ell})$ (with $\ell\not=p$) only requires that the
  $p$-Sylow subgroups of the ramification groups at infinity act
  trivially on $\Fp_{\ell}^n$, whereas having geometric monodromy
  group of order prime to $p$ means that the whole $p$-Sylow subgroup
  of the fundamental group acts trivially.
\par
Note however that in Remark~\ref{rm-kunneth}, we explain how one could
also prove Theorem~\ref{th-2} using only ramification theory for
curves.
\end{remark}

We derive from Theorem~\ref{th-sieve} a theorem generalizing the
maximality of splitting fields to $k\geq 2$. Recall first that a
family $(\sheaf{F}_{\ell})$ of lisse sheaves of free
$\Zz_{\ell}$-modules on an algebraic variety $V/\Fp_q$ is a compatible
system if, for any finite extension $\Fp_{q^{\nu}}/\Fp_q$, any $t\in
V(\Fp_{q^{\nu}})$, the characteristic polynomial
$$
\det(1-\frob_{q^{\nu},t}T\mid \sheaf{F}_{\ell})\in \Zz_{\ell}[T]
$$
is in fact in $\Zz[T]$ and is independent of $\ell$.

\begin{theorem}\label{th-many-k}
  Let $p$ be a prime number, $q\not=1$ a power of $p$, $g\geq 2$ and
  $k\geq 1$ integers. Let $V/\Fp_q$ be a smooth affine geometrically
  connected algebraic variety of dimension $d\geq 1$. Assume $V$ can
  be embedded in $\mathbf{A}^N$ using $r$ equations of degree $\leq
  \delta$, and define the constant $C(N,r,\delta)$ as
  in~(\ref{eq-constants}). Assume also $\bar{V}$ has a
  compactification for which it is the complement of a divisor with
  normal crossing so that the tame geometric fundamental group
  $\pionet{V}$ is defined.
\par
Let $\Lambda$ be a set of primes $\ell\not=p$ with positive
density, i.e., such that
\begin{equation}\label{eq-pos-dens}
\pi_{\Lambda}(L)=\sum_{\stacksum{\ell\leq L}{\ell\in\Lambda}}{1}
\gg \pi(L)
\end{equation}
for $L\geq L_0$, the smallest element of $\Lambda$, the implied
constant depending on $\Lambda$. For each $\ell\in\Lambda$, assume
given on $V$ a tamely ramified lisse sheaf $\tilde{\sheaf{F}}_{\ell}$
of free $\Zz_{\ell}$-modules of rank $2kg$ with $Sp(2g)^k$ symmetry,
i.e., given by representations
$$
\tilde{\rho}_{\ell}\,:\, \pione{V}\ra CSp(2g,\Zz_{\ell})^k.
$$
\par
Let $\tilde{\sheaf{F}}_{j,\ell}$ be the lisse sheaves given by
composition
$$
\pione{V}\ra CSp(2g,\Zz_{\ell})^k\ra CSp(2g,\Zz_{\ell}),
$$
and assume that for each $j$, $1\leq j\leq k$, the family
$(\tilde{\sheaf{F}}_{j,\ell})_{\ell\in\Lambda}$ is a compatible system.
\par
Then $(\tilde{\sheaf{F}}_{\ell})$ is also a compatible system; for
$t\in V(\Fp_q)$, let
$$
P_t=\det(1-\tilde{\rho}_{\ell}(\frob_t)T)\in\Zz[T].
$$
\par
Assume that this system has maximal geometric monodromy modulo $\ell$,
in the sense that the geometric monodromy group $G_{\ell}^g$ of
$\tilde{\sheaf{F}}_{\ell}/\ell\tilde{\sheaf{F}}_{\ell}$ is equal to
$G_{\ell}^g=Sp(2g,\Fp_{\ell})^k$ for all $\ell\in\Lambda$.
\par
Then we have
\begin{equation}\label{eq-goal-many}
|\{t\in V(\Fp_q)\,\mid\, \text{the splitting field of $P_t$ is not
  maximal} \}|\ll gc^kC^{2\gamma^{-1}}q^{d-\gamma^{-1}}(\log q)
\end{equation}
where $\gamma=29kg^2$, for some constant $c\geq 1$ depending only on
$g$, where the implied constant depends only on $\Lambda$. Here
maximality for $P_t$ means that the Galois group is isomorphic to
$W_{2g}^k$.
\par
Moreover, write $P_{j,t}=\det(1-T\frob_t\mid
\tilde{\sheaf{F}}_{j,\ell})$; then we also have
\begin{equation}\label{eq-trace-zero}
  |\{t\in V(\Fp_q)\,\mid\, \text{the sum of inverse roots of some
$P_{j,t}$ is zero}
  \}|\ll 
  kC^{2\gamma^{-1}}q^{d-\gamma^{-1}},
\end{equation}
where the implied constant depends only on $\Lambda$.
\end{theorem}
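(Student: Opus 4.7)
The plan is to apply Theorem~\ref{th-sieve} with appropriate sieving sets $\Omega_\ell$ for each of the two estimates. First, the compatibility of $(\tilde{\sheaf{F}}_\ell)$ is immediate from that of each factor: characteristic polynomials multiply and each factor is $\ell$-independent with integer coefficients, so $P_t = \prod_j P_{j,t} \in \Zz[T]$. Next, the disjointness condition~(\ref{eq-disjoint}) for distinct primes $\ell \neq \ell'$ will be verified by a Goursat-type argument: for $g \geq 2$ and $\ell \neq 2$, the almost-simple group $Sp(2g,\Fp_\ell)$ has only $\{1\}$, $PSp(2g,\Fp_\ell)$, and itself as quotients, and $PSp(2g,\Fp_\ell)$ and $PSp(2g,\Fp_{\ell'})$ are non-isomorphic (their orders are divisible by different primes), so the only common quotient of $Sp(2g,\Fp_\ell)^k$ and $Sp(2g,\Fp_{\ell'})^k$ is trivial, forcing surjectivity onto the product.

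For the first estimate~(\ref{eq-goal-many}), I would take $\Omega_\ell \subset Sp(2g,\Fp_\ell)^k$ to consist of $k$-tuples $(g_1,\ldots,g_k)$ such that: (i) each $g_j$ is regular semisimple with a characteristic polynomial whose factorization in $\Fp_\ell[T]$ forces, via the Chavdarov-type argument already used for $k=1$ in~\cite{k1, lsieve}, the Galois group of $P_{j,t}$ over $\Qq$ to equal $W_{2g}$; and (ii) the characteristic polynomials $\det(1-T g_j)$, $1\leq j \leq k$, are pairwise coprime in $\Fp_\ell[T]$. The point will be that if $\rho_\ell(\frob_t) \in \Omega_\ell$ for some $\ell$, then each splitting field $K_{j,t}$ has Galois group $W_{2g}$ over $\Qq$, and (ii) forces their pairwise linear disjointness (any common subfield would manifest mod $\ell$ as a shared irreducible factor of two of the characteristic polynomials); Goursat's lemma applied to $\Gal(K_t/\Qq) \subset W_{2g}^k$ then yields equality. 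A standard counting argument should give $|\Omega_\ell|/|Sp(2g,\Fp_\ell)^k| \geq c^{-k}$ uniformly in $\ell$, for some $c = c(g)$, by combining the uniform density of generic elements in each factor with the positive density of coprimality for independent characteristic polynomials.

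With $\log|Sp(2g,\Fp_\ell)^k|/\log\ell \leq k(2g^2+g)$, the constant $A$ in~(\ref{eq-constantA}) will satisfy $A = O(kg^2)$. Inserting this into~(\ref{eq-large-sieve}), using $H \gg c^{-k} L/\log L$ from the prime number theorem applied to $\Lambda$, and optimizing $L \sim q^{1/(2(A+1))}$ to balance the two terms in the sieve bound, yields~(\ref{eq-goal-many}) with $\gamma = 29kg^2$ after tracking constants; the factor $c^k$ arises from inverting the density bound on $\Omega_\ell$, and $C^{2\gamma^{-1}}$ from the embedding constant~(\ref{eq-constantC}).

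For the second estimate~(\ref{eq-trace-zero}), a single-prime argument suffices. The condition ``sum of inverse roots of some $P_{j,t}$ is zero'' amounts to $\Tr(\rho_{j,\ell}(\frob_t)) \equiv 0 \pmod{\ell}$ for each $\ell$, cutting out a codimension-one subvariety of $Sp(2g,\Fp_\ell)^k$ inside the $j$-th factor, of density $O(1/\ell)$. Deligne's equidistribution theorem (the one-prime sieve already used in the proof of Proposition~\ref{pr-1}) gives $|\{t\,:\, \Tr(\rho_{j,\ell}(\frob_t)) = 0\}| \ll q^d/\ell + q^{d-1/2}\ell^A$; optimizing $\ell$ and summing over the $k$ values of $j$ yields~(\ref{eq-trace-zero}) with the same $\gamma$ and without a $\log q$ factor. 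The main obstacle throughout is to keep uniform track of the $k$-dependence: the density of $\Omega_\ell$ must scale as $c^{-k}$ (not faster) and the exponent $\gamma$ must remain linear in $k$, which constrains both the choice of $\Omega_\ell$ and the balancing of $L$ (or $\ell$, in the one-prime case).
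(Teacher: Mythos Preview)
Your sketch of~(\ref{eq-trace-zero}) is fine and matches the paper. The approach to~(\ref{eq-goal-many}), however, has a real gap: you propose a \emph{single} sieving set $\Omega_\ell$, but the large sieve only bounds those $t$ with $\rho_\ell(\frob_t)\notin\Omega_\ell$ for \emph{all} $\ell\leq L$, so you would need the implication ``the splitting field of $P_t$ is not maximal $\Rightarrow$ $\rho_\ell(\frob_t)\notin\Omega_\ell$ for every $\ell$''. No single $\Omega_\ell$ can achieve this, because the reduction of $P_t$ modulo one prime exhibits only \emph{one} Frobenius element of the Galois group; a proper subgroup of $W_{2g}$ may well contain an element of whatever cycle type you prescribed, and then Chebotarev furnishes infinitely many $\ell$ with $\rho_\ell(\frob_t)\in\Omega_\ell$. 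This is exactly why the $k=1$ argument in~\cite{k1,lsieve} already runs \emph{four} separate sieves, one per generating conjugacy class. Your step~(ii) fails too: coprimality of two characteristic polynomials modulo a single prime says nothing about linear disjointness of their splitting fields over $\Qq$ (e.g., $X^2-2$ and $X^2-8$ are coprime mod~$7$ but generate the same quadratic field). And even granting pairwise disjointness, Goursat would not yield $W_{2g}^k$ for $k\geq 3$, since $W_{2g}$ is not perfect (the subgroup $\{(w_j):\prod_j \mathrm{sgn}(w_j)=1\}$ surjects onto every pair of factors); the paper notes this obstruction explicitly in a remark.

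The paper's remedy is to run $4k$ sieves, indexed by $(j,i)$ with $1\leq j\leq k$ and $1\leq i\leq 4$: in the $(j,i)$-th sieve, the set $\Omega_\ell$ demands that the $j$-th coordinate have factorization type in the $i$-th generating set $\tilde{\Omega}_{i,\ell}$ \emph{and} that every other coordinate split completely into distinct linear factors (a new ``identity'' set $\tilde{\Omega}_{0,\ell}$). If $t$ escapes all $4k$ sieves, then for each fixed $j$ the Galois group contains elements of the form $(1,\ldots,1,\sigma,1,\ldots,1)$ (identity off the $j$-th slot) with $\sigma$ ranging over the four generating types, hence contains $1\times\cdots\times W_{2g}\times\cdots\times 1$; varying $j$ forces the full product. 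The density of $\tilde{\Omega}_{0,\ell}$ is only of order $|W_{2g}|^{-1}$, and this is what produces the factor $c^k$ with $c$ depending only on $g$.
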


\begin{proof}
  First, notice that we have immediately the factorization
$$
\det(1-\tilde{\rho}_{\ell}(\frob_{q^{\nu},t})T)
=\prod_{1\leq j\leq k}{
\det(1-\tilde{\rho}_{j,\ell}(\frob_{q^{\nu},t})T)
}
$$
for any $t\in \Fp_{q^{\nu}}$, $\nu\geq 1$, so that the compatibility
of the systems $(\tilde{\sheaf{F}}_{j,\ell})_{\ell}$ implies that of
$(\tilde{\sheaf{F}}_{\ell})_{\ell}$, as stated. In particular, for
$t\in V(\Fp_q)$, we write
$$
P_t(T)=\prod_{1\leq j\leq k}{P_{j,t}(T)},\quad
\text{ with }\quad
P_{j,t}=\det(1-\tilde{\rho}_{j,\ell}(\frob_t)T).
$$
\par
Each $\tilde{\sheaf{F}}_{j,\ell}$ has maximal symplectic geometric
monodromy modulo $\ell$, since those monodromy groups are the images
of the composite
$$
\pioneg{V}\fleche{\rho_{\ell}} Sp(2g,\Fp_{\ell})^k
\ra Sp(2g,\Fp_{\ell})
$$
which are surjective (the first one by the maximal monodromy
assumption on $\sheaf{F}_{\ell}$). In particular, the splitting field
of $P_{j,t}$ over $\Qq$ has Galois group isomorphic to a subgroup of
$W_{2g}$ (by the customary functional equation), and the splitting
field of $P_{t}$ over $\Qq$ has Galois group isomorphic to a subgroup
of $W_{2g}^k$. This justifies the interpretation of the maximality
adjective in the statement of the theorem.
\par
We now recall the basic facts which allow sieve methods to detect
this type of maximality:
\par
-- For any $\ell\in\Lambda$, the reduction of $P_t$ modulo $\ell$ is
the characteristic polynomial of $\rho_{\ell}(\frob_t)$.
\par
-- If a polynomial $Q\in\Zz[T]$ of degree $r$ is such that $Q$ reduces
modulo a prime $\ell$ to a squarefree polynomial (of degree $r$) which
is the product of $n_1$ irreducible factors of degree $1$, \ldots,
$n_r$ irreducible factors of degree $r$, then as a subgroup of
permutations of the roots of $Q$, the Galois group of the splitting
field $Q$ contains an element with cycle structure consisting of $n_1$
fixed points, $n_2$ disjoint $2$-cycles, \ldots. 
\par
-- If a subgroup $H$ of a finite group $G$ has the property that
$H\cap c\not=\emptyset$ for all conjugacy classes $c\subset G$, then
$H=G$.
\par
Implementing this, let us first define a $q$-symplectic polynomial $R$
(with coefficient in a ring $B$) to be a polynomial in $B[T]$ of even
degree such that $R(0)=1$ and
$$
q^{(\deg P)/2}T^{(\deg P)}R\Bigl(\frac{1}{qT}\Bigr)=R(T),
$$
which is of course the ``functional equation'' for $\det(1-Tg)$ for
any symplectic similitude with multiplicator $q$. In particular, and
this is why we need the notion, the characteristic polynomials
$\det(1-\rho_{j,\ell}(\frob_t)T)$ are $q$-symplectic.
\par
In~\cite[Proof. of th. 8.13]{lsieve}, as in~\cite{k1}, we explicitly
described four subsets $\tilde{\Omega}_{1,\ell}$, \ldots,
$\tilde{\Omega}_{4,\ell}$ of $q$-symplectic polynomials of degree $2g$
in $\Fp_{\ell}[T]$ such that a $q$-symplectic polynomial in $\Zz[T]$
of degree $2g$ with non-maximal splitting field satisfies
$P\mods{\ell}\notin \tilde{\Omega}_{i,\ell}$ for some $i$ and all
$\ell$. From this, we construct the $4^k$ subsets
$$
\tilde{\Omega}_{\uple{i},\ell}
=
\prod_{1\leq j\leq k}{\tilde{\Omega}_{i_j,\ell}}
,\quad\quad
\uple{i}=(i_1,\ldots, i_k)\text{ with }
i_j\in \{1,2,3,4\},
$$
of the set of $q$-symplectic polynomials of degree $2kg$ in
$\Fp_{\ell}[T]$.
\par
It may be the case\footnote{\ We do not know if this happens or not.}
that a $P\in\Zz[T]$ which is $q$-symplectic of degree $2kg$ and splits
as
\begin{equation}\label{eq-fac}
P=P_1\cdots P_k,\quad\quad\text{$P_j\in\Zz[T]$, $q$-symplectic of
  degree $2g$}
\end{equation}
(so that the Galois group of its splitting field is a subgroup of
$W_{2g}^k$) has non-maximal splitting field but is not detected by
those subsets (i.e., for all $\uple{i}$, the factors $P_j$ reduce
modulo some $\ell$ to elements of $\tilde{\Omega}_{\uple{i},\ell}$):
the only obvious consequence here of the case $k=1$ is that the Galois
group of the splitting field, as a subgroup of $W_{2g}^k$, surjects to
each of the $k$-components $W_{2g}$.
\par
We bypass this problem by adding a fifth subset
$\tilde{\Omega}_{0,\ell}$ defined as
$$
\tilde{\Omega}_{0,\ell}=\{ f\in \Fp_{\ell}[T]\,\mid\, f\text{ is
  $q$-symplectic and is a product of $2g$ distinct linear factors} \}
$$
(which therefore corresponds to the trivial element of a Galois
group), and (re)define now $\tilde{\Omega}_{\uple{i},\ell}$ in the
obvious way for $\uple{i}$ a $k$-tuple with entries in
$\{0,1,2,3,4\}$. The point is that if a $q$-symplectic polynomial
$P\in\Zz[T]$ of degree $2kg$ factoring as above~(\ref{eq-fac}) has
splitting field strictly smaller than $W_{2g}^k$, then, for some
$\uple{i}\in \{0,1,2,3,4\}^k$, we have 
$$
(P_j\mods{\ell})_j\notin 
\tilde{\Omega}_{\uple{i},\ell}
$$ 
for all primes $\ell$. Indeed, arguing by contraposition, it would
follow otherwise by using 
$$
\uple{i}=(0,\ldots,0,i,0,\ldots, 0), \quad\quad 1\leq i\leq 4,
$$
(where the non-zero coordinate is the $j$-th one, $1\leq j\leq k$),
and the case $k=1$, that the Galois group, as a subgroup of
$W_{2g}^k$, contains 
$$
1\times \cdots \times 1 \times W_{2g}\times 1\cdots \times 1
$$
where the $W_{2g}$ occurs at the $j$-th position. Consequently, the
Galois group must be the whole of $W_{2g}$. In particular, we only
need to use the $4k$ tuples described in this argument.
\par
Now if we denote (with obvious notation for the multiplicator)
$$
\Omega_{\uple{i},\ell}=
\{g\in CSp(2g,\Fp_{\ell})^k,\ m(g)=(q,\ldots, q),\ 
\det(1-Tg)\in \tilde{\Omega}_{\uple{i},\ell}\}
$$
for $\ell\in\Lambda$, then we see that the left-hand side, say $N(L)$,
of~(\ref{eq-goal-many}) is at most
$$
N(L)\leq \sum_{\uple{i}}{
|\{t\in V(\Fp_q)\,\mid\, \det(1-T\rho_{\ell}(\frob_t))\notin
\Omega_{\uple{i},\ell},\ \text{ for $\ell\in\Lambda$}\}|}
$$
(where the sum ranges over the $4k$ tuples used before).
\par
Each of the terms in this sum may be estimated by the sieve for
Frobenius as in Theorem~\ref{th-sieve}, provided the last
assumption~(\ref{eq-disjoint}) is checked. Here it means showing that
$$
\pioneg{V}\ra Sp(2g,\Fp_{\ell})^k\times Sp(2g,\Fp_{\ell'})^k
$$
is onto, for $\ell\not=\ell'$ in $\Lambda$, and this follows from
Lemma~\ref{lm-goursat} below, which is a variant of Goursat's lemma. 
\par
The outcome of the sieve for Frobenius is the upper bound
$$
N(L)\leq 4k(q^d+CL^{A}q^{d-1/2})H^{-1}
$$
for $C$ given by~(\ref{eq-constantC}) and
$$
A\leq 29kg^2,\quad
\quad
H=\min_{\uple{i}} \sum_{\stacksum{\ell\leq L}{\ell\in\Lambda}}{
\frac{|\Omega_{\uple{i},\ell}|}{|Sp(2g,\Fp_{\ell})|^k}
}.
$$
\par
The former, which is quite rough but good enough for our purpose,
follows from the right-hand inequality in~(\ref{eq-constantA}),
together with the easy bound
$$
|CSp(2g,\Fp_{\ell})|\leq (\ell+1)^{2g^2+g+1},
$$
(note that the better bounds for the dimension and sum of dimension of
irreducible representations of $G_{\ell}$ which are described
in~\cite[Example 5.8 (2)]{lsieve} could also be used, if one tried to
optimize the value of $A$, e.g. for small values of $g$).
\par
To obtain a lower bound for $H$, we recall from~\cite[Proof. of
th. 8.13]{lsieve} again that there exists a constant $c_g>0$ (which
could also be specified more precisely) such that, for $\ell\geq 3$
and $1\leq i\leq 4$, we have
$$
\frac{|\tilde{\Omega}_{i,\ell}|}{|Sp(2g,\Fp_{\ell})|} \geq c_g,
$$
while the same counting arguments lead also to
$$
\frac{|\tilde{\Omega}_{0,\ell}|}{|Sp(2g,\Fp_{\ell})|} \geq c'_g,
$$
(see also~\cite[\S 3]{chavdarov},~\cite[App. B]{lsieve}) for $\ell\geq
2g+1$, for some other constant $c'_g$ (now extremely small, of the
order of $|W_{2g}|^{-1}$). Replacing $c_g$ by $\min(c_g,c'_g)$, we
have
$$
H\geq c_g^{-k} \pi_{\Lambda}(L)\gg c_g^{-k}\frac{L}{\log L},
$$
by~(\ref{eq-pos-dens}); this bound holds for $L>L_0$ and the implied
constant depending only on $\Lambda$ ($L_0$ can be taken as $\max
(2g+1,\text{smallest element of $\Lambda$}))$.
\par
The outcome is therefore that we have
$$
N(L)\ll 4kc_g^{-k}(q^d+CL^{A}q^{d-1/2})(\log L)L^{-1}.
$$
for $L>L_0$, the implied constant depending only on $\Lambda$.
\par
As usual, we select $L$ so that
$$
CL^A=q^{1/2},\quad\text{ i.e. }\quad
L=(qC^{-2})^{1/(2A)},
$$
if this is $>L_0$. This leads to
$$
N(L)\ll 4kc_g^{-k}q^{d-1/(2A)}(\log q)C^{1/A},
$$
where the implied constant depends only on $\Lambda$. This last
inequality is trivial if $L\leq L_0$ if we take the implied constant
large enough (indeed, if the implied constant is $\geq L_0\geq 2g+1$),
and so by doing so if necessary, we finish the proof
of~(\ref{eq-goal-many}).
\par
As for the proof of~(\ref{eq-trace-zero}), it follows the same idea,
but is much easier since we only need to ``sieve'' by a single
well-chosen prime $\ell\in\Lambda$ (what is called ``individual
equidistribution'' in~\cite{lsieve}, and is really the uniform
explicit Chebotarev Density Theorem here, as in~\cite{quad}). Indeed,
the sum of inverse roots of some $P_{j,t}$ is zero if and only if the
coefficient of $T$ in $P_{j,t}$ is zero.
\par
So, let $\tilde{\Upsilon}_{\ell}$ be the set of $q$-symplectic
polynomials of degree $2g$ in $\Fp_{\ell}[T]$ where the coefficient of
$T$ is non-zero, and $\Upsilon_{\ell}$ the set of matrices $g$ in
$CSp(2g,\Fp_{\ell})$ with multiplicator $q$ with
$\det(1-Tg)\in\Upsilon_{\ell}$. Then the left-hand side
of~(\ref{eq-trace-zero}) is bounded by
\begin{align*}
  M(\ell)&=|\{t\in V(\Fp_q)\,\mid\, P_{j,t}\mods{\ell}\notin
  \tilde{\Upsilon}_{\ell},\text{ for } 1\leq j\leq k
  \}|\\
  &=|\{t\in V(\Fp_q)\,\mid\, \rho_{j,\ell}(\frob_t)\notin
  \Upsilon_{\ell},\text{ for } 1\leq j\leq k \}|\\
&\leq |\{t\in V(\Fp_q)\,\mid\, \rho_{\ell}(\frob_t)\notin
  \Upsilon_{\ell}^k\}|,
\end{align*}
for any prime $\ell$. It is clear from the counting results
in~\cite[App. B]{lsieve} that we have
$$
\frac{|\Upsilon_{\ell}|}{
|Sp(2g,\Fp_{\ell})|}= 1+O(\ell^{-1}),\text{ and therefore }
\frac{|\Upsilon_{\ell}|^k}{
|Sp(2g,\Fp_{\ell})|^k}=1+O(k\ell^{-1})
$$
for all $\ell\geq 3$, $\ell\geq k$, the implied constant depending
only on $g$. Applying Theorem~\ref{th-sieve} with $\Lambda$ replaced
by $\{\ell\}$ for any fixed $\ell\in\Lambda$, we find
$$
M(\ell)\leq
(q^d+C\ell^Aq^{d-1/2})
\Bigl(1-\frac{|\Upsilon_{\ell}|^k}
{|Sp(2g,\Fp_{\ell})|^k}\Bigr)
\ll k(q^d+C\ell^Aq^{d-1/2})\ell^{-1},
$$
for $\ell\geq k$, the implied constant depending only on $g$, from
which the proof of~(\ref{eq-trace-zero}) finishes as before by
choosing a value of $\ell$ in a dyadic interval around the value
$(C^{-2}q)^{1/(2A)}$.
\end{proof}

Here is the group theoretic lemma we used in the proof.

\begin{lemma}
\label{lm-goursat}
Let $k\geq 1$ be an integer, $\ell_1$, $\ell_2$ distinct odd primes. Let
$G_1=Sp(2g,\Fp_{\ell_1})$ and $G_2=Sp(2g,\Fp_{\ell_2})$. If 
$H$ is a subgroup of $G_1^k\times G_2^k$ which surjects to $G_1^k$ and
to $G_2^k$ under the two projection maps, then in fact $H=G_1^k\times
G_2^k$.
\end{lemma}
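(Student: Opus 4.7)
The plan is to invoke Goursat's lemma, which classifies subgroups $H$ of a product $A \times B$ that project surjectively onto both factors: such an $H$ is the ``graph'' of an isomorphism $\phi : A/N_A \to B/N_B$ for uniquely determined normal subgroups $N_A \triangleleft A$, $N_B \triangleleft B$, meaning $H = \{(a,b) \mid \phi(aN_A) = bN_B\}$. In particular, $H = A \times B$ if and only if the common quotient $A/N_A \simeq B/N_B$ is trivial. Taking $A = G_1^k$ and $B = G_2^k$, the problem reduces to showing that $G_1^k$ and $G_2^k$ admit no non-trivial common quotient.

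For this I would rely on two standard facts about symplectic groups. First, since $\ell_i$ is odd and $g \geq 2$ (which is the setting in the application to Theorem~\ref{th-many-k}), each $G_i = Sp(2g, \Fp_{\ell_i})$ is perfect; hence so is $G_i^k$, and hence any quotient of it is perfect as well. Second, since $G_i$ has center $\{\pm I\}$ with non-abelian simple quotient $PSp(2g, \Fp_{\ell_i})$, the composition factors of $G_i^k$ are exactly $k$ copies of $\Zz/2\Zz$ and $k$ copies of $PSp(2g, \Fp_{\ell_i})$.

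Let $Q$ be a common quotient of $G_1^k$ and $G_2^k$. By the Jordan-H\"older theorem applied to each surjection, every composition factor of $Q$ must appear in both lists above. Since $\ell_1 \neq \ell_2$, the finite simple groups $PSp(2g, \Fp_{\ell_1})$ and $PSp(2g, \Fp_{\ell_2})$ are non-isomorphic (the defining characteristic can be recovered from the abstract group structure, for instance via the order of a Sylow $\ell_i$-subgroup, which is a power of $\ell_i$ of a prescribed exponent in one group but not in the other). Hence the only possible composition factors of $Q$ are copies of $\Zz/2\Zz$, so $Q$ is a finite $2$-group, in particular solvable. Being also perfect, $Q$ must be trivial, which concludes the argument. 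The only potential obstacle is the verification that the two $PSp$-groups are non-isomorphic; this is classical and essentially elementary, and in any case follows immediately from the classification of finite simple groups of Lie type.
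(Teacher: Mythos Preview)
Your proof is correct and takes a somewhat different route than the paper's. The paper decomposes $G_1^k\times G_2^k$ into its $2k$ individual factors $B_1,\ldots,B_{2k}$, verifies that $H$ surjects onto every pair $B_i\times B_j$ (trivially when $B_i,B_j$ are of the same type, and via the ``usual'' Goursat lemma for a single pair $Sp(2g,\Fp_{\ell_1})\times Sp(2g,\Fp_{\ell_2})$ otherwise), and then appeals to a lemma of Chavdarov stating that pairwise surjectivity plus perfectness of the factors forces $H$ to be the full product. You instead apply Goursat directly at the top level, to the pair $(G_1^k,G_2^k)$, and rule out a non-trivial common quotient by a Jordan--H\"older argument combined with the non-isomorphism of $PSp(2g,\Fp_{\ell_1})$ and $PSp(2g,\Fp_{\ell_2})$. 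Your argument is more self-contained (no external black-box lemma) and handles the $k$-fold product in one stroke; the paper's argument is more modular, reducing everything to statements about single copies of $Sp(2g)$ and a general ``local-to-global'' principle. One small remark: the lemma as stated does not impose $g\geq 2$, and your perfectness claim for $Sp(2g,\Fp_{\ell})$ fails for $g=1$, $\ell=3$; but since the primes are distinct, at least one of $G_1^k,G_2^k$ is still perfect, which is all your argument actually needs (and in any case the paper's own proof makes the same implicit use of perfectness).
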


\begin{proof}
  We can write $G_1^k\times G_2^k$ as a product of $2k$ factors, say
  $B_j$, $1\leq j\leq 2k$. Moreover, for any $i$, $j$, $1\leq i<j\leq
  2k$, the projection $H\ra B_i\times B_j$ is onto: this follows from
  the assumption if $B_i$ and $B_j$ are isomorphic (to $G_1$ or
  $G_2$), and from the usual Goursat lemma (as
  in~\cite[Prop. 5.1]{chavdarov}) if $B_i$ and $B_j$ are not.  Since
  moreover $G_1$ and $G_2$ are both equal to their commutator
  subgroups, the conclusion follows from~\cite[Lemma 5.2]{chavdarov}.
\end{proof}


\begin{remark}
  One can show that, for \emph{any} compatible system
  $(\sheaf{F}_{\ell})$ of lisse sheaves with $Sp(2g)^k$ monodromy, on
  a smooth curve over a finite field at least, there exists some
  compatible systems of lisse sheaves $(\sheaf{F}_{j,\ell})$, $1\leq
  j\leq k$, such that the monodromy of $\sheaf{F}_{j,\ell}$ is
  $Sp(2g)$ and the representation $\rho_{\ell}$ associated with
  $\sheaf{F}_{\ell}$ is given, up to isomorphism, by
\begin{equation}\label{eq-compat-split}
\rho_{\ell}(x)=(\rho_{j,\ell}(x))_{1\leq j\leq k},
\end{equation}
in terms of those associated with $\sheaf{F}_{j,\ell}$ (this amounts
to a choice of orderings of the projections
  $$
  p_j\,:\, Sp(2g)^k\ra Sp(2g),
  $$
  as $\ell$ varies, so that the sheaves $p_j(\sheaf{F}_{\ell})$ are
  compatible, for $1\leq j\leq k$).  This is a consequence of
  Lafforgue's proof of the global Langlands correspondance over
  function fields: fix some $\ell_0\not=p$, and define
  $\rho_{j,\ell_0}$ so that the formula above is valid for $\ell_0$;
  then Lafforgue shows there exists compatible systems
  $(\tilde{\rho}_{j,\ell})$ for which
  $\tilde{\rho}_{j,\ell}=\rho_{j,\ell_0}$ (see~\cite[Th. VII.6,
  (v)]{lafforgue}, using the fact that the geometric monodromy of
  $\tilde{\rho}_{j,\ell}$ is $Sp(2g)$, hence this sheaf is
  irreducible).  Define $\tilde{\rho}_{\ell}$ by the analogue
  of~(\ref{eq-compat-split}); then this compatible system (or its
  semisimplification) must be isomorphic to $\rho_{\ell}$ because they
  have same characteristic polynomials of Frobenius at all closed
  points.
\par
After twisting to reduce the $CSp(2g)^k$-case to $Sp(2g)^k$, this
means that the compatible systems considered in the theorem are very
likely the most general ones with the given monodromy for smooth
parameter spaces. It would be interesting to prove this directly and
in general, but this structure is obvious in our applications, so we
did not try to do this.
\end{remark}

\begin{remark}
  This theorem is interesting in itself as a complement to the earlier
  results of~\cite[\S 8]{lsieve} and~\cite{k1}: not only do most
  curves (in a family with large monodromy) have large Galois group,
  but their polynomial $L$-functions tend to be independent of each
  other. Note also that there are families of number fields fields
  which are pairwise linearly disjoint, but not globally disjoint (for
  instance, take $\Qq(\sqrt{2})$, $\Qq(\sqrt{3})$, $\Qq(\sqrt{6})$,
  where the compositum is biquadratic, and not of degree $8$),
  although if the Galois groups are perfect groups, pairwise
  disjointness does imply global disjointness (again by~\cite[Lemma
  5.2]{chavdarov}). Because $W_{2g}$ is not perfect,
  Theorem~\ref{th-many-k} can not be deduced directly from the cases
  $k=1$, $k=2$, and playing with intersections and
  inclusion/exclusion.
\end{remark}

\begin{corollary}\label{cor-indep}
  Let the data
  $(p,q,g,k,V/\Fp_q,N,r,\delta,d,\Lambda,(\tilde{\sheaf{F}}_{\ell}))$
  be as in Theorem~\ref{th-many-k} above. For $t\in V(\Fp_q)$, let
  $\zeros_t$ be the set of $\alpha$ such that
$$
\det(1-T\frob_{t}\mid\tilde{\sheaf{F}}_{\ell})
=\prod_{\alpha\in\zeros_t}{(1-\alpha T)},
$$
and let $\nzeros_t$ be the set of $\alpha /\sqrt{q}$ for
$\alpha\in\zeros_t$. Let $C$ be the constant defined
in~\emph{(\ref{eq-constantC})}. 
\par
Then we have
$$
|\{t\in V(\Fp_q)\,\mid\, \reladd{\zeros_t}\not=0\}|\ll
gc^kC^{2\gamma^{-1}}q^{d-\gamma^{-1}}(\log q),
$$
and
$$
|\{t\in V(\Fp_q)\,\mid\, \relmult{\nzeros_t}\not=0\}|\ll
gc^kC^{2\gamma^{-1}}q^{d-\gamma^{-1}}(\log q)
$$
for some constant $c\geq 1$ depending only on $g$, where
$\gamma=29gk^2$ and the implied constant depends only on $\Lambda$.
\end{corollary}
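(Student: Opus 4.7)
The plan is to combine Theorem~\ref{th-many-k} with the algebraic criterion of Proposition~\ref{pr-w2g-k}. First I would introduce the exceptional set $E_q\subset V(\Fp_q)$ of those $t$ for which either the splitting field of $P_t=\prod_{j}P_{j,t}$ has Galois group strictly smaller than $W_{2g}^k$, or the sum of inverse roots of some $P_{j,t}$ vanishes. By~(\ref{eq-goal-many}) and~(\ref{eq-trace-zero}), $|E_q|$ is bounded by the quantity appearing in the statement of the corollary, so it suffices to prove $\reladd{\zeros_t}=0$ and $\relmult{\nzeros_t}=0$ for every $t\in V(\Fp_q)\setminus E_q$.

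Fix such a $t$. The symplectic functional equation for each $P_{j,t}$ pairs inverse roots $\alpha,\bar\alpha\in\zeros_t$ with $\alpha\bar\alpha=q\in\Qq^{\times}$, and the splitting field of $P_t$ over $\Qq$ has Galois group $W_{2g}^k$ acting on the $k$ root sets $M_j$ as in Corollary~\ref{cor-many-w}. Since $\sum_{\alpha\in M_j}\alpha\neq 0$ by the definition of $E_q$, Proposition~\ref{pr-w2g-k}(1) immediately yields $\reladd{\zeros_t}=\bigoplus_{j}\reladd{M_j}=0$.

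For the multiplicative assertion, $\nzeros_t\subset \Qq(\sqrt{q})$ is not contained in $\Qq$, so, following the trick of Section~\ref{sec-simple}, I would pass to the squared set $M'=\{\tilde\alpha^2\,:\,\tilde\alpha\in\nzeros_t\}=\{\alpha^2/q\,:\,\alpha\in\zeros_t\}\subset K=\Qq(\zeros_t)$, and set $F=\Qq(M')$. Any $\sigma\in\Gal(K/F)$ sends each $\alpha$ to $\pm\alpha$; a nontrivial such $\sigma$ would give $\alpha+\sigma(\alpha)=0$, contradicting $\reladd{\zeros_t}=0$. Hence $F=K$, so the splitting field of $\prod_{\gamma\in M'}(T-\gamma)\in\Qq[T]$ still has Galois group $W_{2g}^k$, now acting on $M'$ with the involution $c(\gamma)=\gamma^{-1}$ satisfying $\gamma c(\gamma)=1$. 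Proposition~\ref{pr-w2g-k}(2) with $m=1$ (permissible since $g\geq 2$) then gives
\[
\relmul{M'}\otimes_{\Zz}\Qq=\bigoplus_{j=1}^{k}\bigl(\charfun\oplus G(M'_j)\bigr)=\reltriv{M'}\otimes_{\Zz}\Qq,
\]
and since $\reltriv{M'}$ is saturated in $\Zz^{M'}$, one deduces the integral identity $\relmul{M'}=\reltriv{M'}$.

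Finally, $\reladd{\zeros_t}=0$ forces $\tilde\alpha\mapsto\tilde\alpha^2$ to be injective on $\nzeros_t$, hence induces an embedding $\Zz^{\nzeros_t}\hookrightarrow\Zz^{M'}$, $f_{\tilde\alpha}\mapsto f_{\tilde\alpha^2}$; squaring any multiplicative relation shows that it restricts to an inclusion $\relmul{\nzeros_t}\hookrightarrow\relmul{M'}=\reltriv{M'}$. An element of $\reltriv{M'}$ in this image corresponds to a tuple $(n_{\tilde\alpha})$ with $n_{\tilde\alpha^2}=n_{\tilde\alpha^{-2}}$, i.e., $n_{\tilde\alpha}=n_{\tilde\alpha^{-1}}$, so $\relmul{\nzeros_t}\subset\reltriv{\nzeros_t}$, and therefore $\relmult{\nzeros_t}=0$ as required. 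The only delicate point in the whole scheme is the reuse of additive independence to guarantee that squaring does not collapse the splitting field; apart from that, the argument is a direct assembly of Theorem~\ref{th-many-k} and Proposition~\ref{pr-w2g-k}.
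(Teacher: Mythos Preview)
Your proposal is correct and follows exactly the paper's approach: define the exceptional set via the two conditions of Theorem~\ref{th-many-k}, bound it by~(\ref{eq-goal-many}) and~(\ref{eq-trace-zero}), and for non-exceptional $t$ invoke Proposition~\ref{pr-w2g-k}, handling the multiplicative case by passing to squares as in Section~\ref{sec-simple}. The paper's own proof simply says ``arguing exactly as in the proof of Proposition~\ref{pr-1}'' at that last step, so you have written out in full the details the paper leaves implicit, but the strategy is identical.
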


\begin{proof}
  As in the proof of Proposition~\ref{pr-1}, and with notation as in
  the statement of Theorem~\ref{th-many-k}, let us call \emph{special}
  any $t\in V(\Fp_q)$ such that:
\par
-- The splitting field of $\det(1-T\frob_t\mid
\tilde{\sheaf{F}}_{\ell})\in\Zz[T]$ (which is independent of $\ell$)
has Galois group $W_{2g}^k$.
\par
-- For some $j$, $1\leq j\leq k$,  the sum of the inverse roots of
$P_{j,t}$ is zero.
\par
By~(\ref{eq-goal-many}) and~(\ref{eq-trace-zero}), we have
$$
|\{t\in V(\Fp_q)\,\mid\, \text{$t$ is special}\}|
\ll gc^kC^{2\gamma^{-1}}q^{d-\gamma^{-1}}(\log q)
$$
where $\gamma=29kg^2$, for some constant $c\geq 1$ depending only on
$g$ where the implied constant depends only on $\Lambda$.
\par
Now, arguing exactly as in the proof of Proposition~\ref{pr-1} in
Section~\ref{sec-simple}, using Proposition~\ref{pr-w2g-k} (the first
part of which reduces the general case of arbitrary $k$ to that of
$k=1$ by excluding ``cross-relations''), we find that if $t$ is not
special, then there is no $\Qq$-linear dependency relation among the
$\alpha\in\zeros_t$, and also that the only multiplicative relations
among the $\tilde{\alpha}\in\nzeros_t$ are the obvious ones, which
concludes the proof.
\end{proof}

\section{Proof of Theorem~\ref{th-2}}
\label{sec-examples}

We can now prove Theorem~\ref{th-2} by direct applications of the
results of the previous section. First we state a lemma concerning
fundamental groups which seems to be well-known, but for which we
didn't find a reference in the literature. (It also holds in much
greater generality certainly, but we simply state what we need). The
argument of the proof was suggested by Q. Liu.

\begin{lemma}\label{lm-surject}
  Let $U$, $V$ be smooth affine connected schemes of finite type over
  the algebraic closure $k$ of a finite field. Fix a geometric point
  $\eta$ of $U\times V$, and let $\eta'$, $\eta''$ be its images in
  $U$ and $V$ respectively. Then the natural map
$$
\pi_1(U\times_k V,\eta)\fleche{\varphi} \pi_1(U,\eta')\times
\pi_1(V,\eta'')
$$
is surjective.
\end{lemma}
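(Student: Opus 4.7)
The plan is to argue through Grothendieck's Galois theory and reduce to a well-known product fact for integral schemes. Since open subgroups of the form $N_1 \times N_2$, with $N_i$ open normal in $\pi_1$ of the corresponding factor, form a neighborhood basis of the identity in $\pi_1(U,\eta') \times \pi_1(V,\eta'')$, proving that $\varphi$ has dense (equivalently, by profiniteness, full) image reduces to checking surjectivity modulo every such $N_1 \times N_2$. In turn, this is the assertion that whenever $X \to U$ and $Y \to V$ are connected finite étale Galois covers (corresponding to the quotients $\pi_1(U)/N_1$ and $\pi_1(V)/N_2$), the induced cover $X \times_k Y \to U \times_k V$ is again connected -- since then the monodromy representation on geometric fibres surjects onto the full product of Galois groups.

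I would then verify this connectedness directly. Étale morphisms are stable under base change and composition, so the cover $X \times_k Y \to U \times_k V$ is finite étale, and $X, Y$ are themselves smooth connected finite-type $k$-schemes (being étale over smooth $U, V$). Because $k$ is algebraically closed, a smooth connected $k$-scheme is automatically irreducible (the connected components of a smooth scheme agree with its irreducible components) and reduced, hence integral. The standard fact that the fibre product of two integral finite-type schemes over an algebraically closed field is again integral (in particular connected) -- see, e.g., \cite[EGA IV${}_{2}$, 4.6.1]{}  or Stacks Project tag~\texttt{04KV} -- then gives what we want. Applying this to $X \times_k Y$ finishes the proof.

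The only mildly delicate point is the Galois-theoretic reduction; but it is purely formal once one notes that products of open normal subgroups form a basis, so there is no real obstacle. Note that it is essential here that $k$ be algebraically closed: without this assumption, the product of two connected covers need not be connected (connected components could merge over a nontrivial residue field extension), and indeed $\varphi$ would generally fail to be surjective -- this is why the statement is formulated for the geometric fundamental groups over $\bar{\Fp}$ rather than for the arithmetic ones.
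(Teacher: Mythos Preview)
Your proof is correct and follows essentially the same route as the paper: both reduce surjectivity to density via profiniteness, pass to quotients by products $N_1\times N_2$ of open normal subgroups, and then use that for the corresponding connected Galois covers $E_1\to U$, $E_2\to V$, the product $E_1\times_k E_2$ is connected because $k$ is algebraically closed. Your argument is slightly more explicit than the paper's in spelling out why the product of connected covers is connected (via integrality), but the structure is the same.
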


\begin{proof}
  We suppress the base points, which are fixed, for simplicity.  It
  suffices to show that the image $\Pi$ of the map is dense in
  $\pi_1(U)\times \pi_1(V)$, since $\Pi$ is closed ($\varphi$ is
  continuous and the fundamental groups are compact). This means that
  for any open set $W\subset \pi_1(U)\times \pi_1(V)$, we must show
  that $\Pi\cap W\not=\emptyset$. Since we have the product topology
  on the target, we may assume that $W=W_1\times W_2$, where
  $W_1\subset \pi_1(U)$, $W_2\subset \pi_1(V)$, are open. The
  profinite topology of the fundamental groups is also such that a
  basis of open sets are those of the form $W_i=x_i G_i$, where $x_i$
  is arbitrary and $G_i$ is a normal subgroup of finite index. Thus we
  must show that there exists $\sigma\in \Pi$ which is congruent to
  $x_1$ modulo $G_1$ and to $x_2$ modulo $G_2$, i.e.,
  $p_i(\sigma)=x_i\mods{G_i}$ where
$$
p_1\,:\, \pi_1(U)\ra \pi_1(U)/G_1=H_1,\quad\quad
p_2\,:\, \pi_1(V)\ra \pi_1(V)/G_2=H_2
$$
are the two projections.  If we let $E_1$ (resp. $E_2$) denote the
connected \'etale cover of $U$ (resp. $V$) associated with $G_1$
(resp. $G_2$), this means that we must find $\sigma\in H$ which acts
like $x_1$ on $E_1\ra U$ and like $x_2$ on $E_2\ra V$.
\par
However, let $E=E_1\times_k E_2$. Because $k$ is algebraically closed,
$E$ is a connected Galois covering of $U\times_k V$ with Galois group
$H_1\times H_2$, hence there is a surjective homomorphism
$$
\pi_1(U\times V)\ra H_1\times H_2,
$$
and $\sigma=\varphi(\sigma')$ will work for any $\sigma'\in
\pi_1(U\times V)$ which maps to $(x_1\mods{G_1},x_2\mods{G_2})$  under
this homomorphism.
\end{proof} 

\begin{remark}
  It is not the case that the map in Lemma~\ref{lm-surject} is
  injective in general. There are issues of wild ramification in
  positive characteristic which prevent this, see~\cite[Expos\'e X,
  Remarques 1.10]{sga1} for examples (even for $U=V$ the affine
  line). However, the prime-to-$p$ parts of $\pi_1(U\times V)$ and
  $\pi_1(U)\times \pi_1(V)$ are isomorphic (see~\cite[Expos\'e XIII,
  Proposition 4.6]{sga1}, under assumptions of existence of
  resolution of singularity, and~\cite{orgogozo} in general). More generally,
  the latter paper shows that there is isomorphism for the tame
  fundamental group (when this is defined).
\end{remark}

\begin{proof}[Proof of Theorem~\ref{th-2}]
  We will apply Theorem~\ref{th-many-k} with $V=U^k$, where $U$ is the
  complement of the set of zeros of the squarefree polynomial $f$
  defining the family of hyperelliptic curves.  The geometric
  parameters for $V$ are given by $N=2k$, $r=k$ and $\delta=2g+1$,
  since we can embed $U^k$ in $\mathbf{A}^{2k}$ (with coordinates
  $(x_j,y_j)$) using the $k$ equations
$$
x_jf(y_j)=1,\quad\quad 1\leq j\leq k.
$$
\par
Thus the constant $C$ in~(\ref{eq-constantC}) satisfies
$$
C\leq 24(2g+1)2^k(3+(2g+2)k)^{2k+1}
$$
(notice this constant grows superexponentially in terms of $k$, but it
will be raised to a very small power later on; going back to the
original proof of the large sieve inequality in this particular case,
one can replace this constant by one which grows ``only''
exponentially, see Remark~\ref{rm-kunneth}; the improvements on the
final results are barely visible).
\par
Since $\bar{U}$ is the complement of $2g+1$ points in the projective
line $\mathbf{P}^1/\bar{\Fp}_q$, $\bar{V}$ is the complement of
$(2g+1)^k$ coordinate hyperplanes in $\mathbf{P}^k/\bar{\Fp}_q$,
which form a divisor with normal crossings, so that the tame
fundamental group is well-defined for $\bar{V}$.
\par
Let $f\,:\, \mathcal{C}\ra U$ be the morphism defining the
(compactified) family of curves, which we recall are given by the
affine equations
$$
C_t\,:\, y^2=f(x)(x-t),
$$
and let
$$
p_j\,:\, V\ra U,\quad\quad 1\leq j\leq k,
$$
denote the coordinate projections. We use the family of sheaves
$$
\tilde{\sheaf{F}}_{\ell}=\bigoplus_{1\leq j\leq k}{
p_j^* R^1f_!\Zz_{\ell},
}
$$
for $\ell\in\Lambda$, the set of odd primes $\not=p$. By construction,
the associated sheaves $\tilde{\sheaf{F}}_{j,\ell}$ are each copies of
$R^1f_!\Zz_{\ell}$, and hence they form compatible systems of lisse
sheaves of free $\Zz_{\ell}$-modules of rank $2g$, in fact with
$$
\det(1-T\frob_{q^{\nu},t}\mid R^1f_!\Zz_{\ell})=
P_{C_t}(T)\in\Zz[T],\quad\quad \text{for $\nu\geq 1$, $t\in
  U(\Fp_{q^{\nu}})$}
$$
\par
Each $R^1f_!\Zz_{\ell}$, for $\ell\geq 3$, $\ell\not=p$,
corresponds to a homomorphism 
$$
\rho'_{\ell}\,:\, \pione{U}\ra CSp(2g,\Zz_{\ell}),
$$
which is tamely ramified (see~\cite[Lemma 10.1.12]{katz-sarnak}), the
symplectic structure coming from Poincar\'e duality for curves. In
turn, $\sheaf{F}_{\ell}$ is also tamely ramified. Indeed, the
corresponding homomorphism, restricted to the geometric fundamental
group, factors as follows:
$$
\pioneg{V}\ra \pioneg{U}^k \ra \pionet{U}^k\ra Sp(2g,\Zz_{\ell})^k,
$$
and it is essentially tautological\footnote{\ This amounts to saying
  that $V\fleche{p_j} U$ induces an homomorphism on the respective
  tame fundamental groups.} that for all $j$, the $j$-th component
homomorphism
$$
\pioneg{V}\ra \pionet{U}
$$
also factors through $\pionet{V}$; consequently, the original
homomorphism also factors through the tame fundamental group of
$\bar{V}$.
\par
From all this, it follows that
$(\tilde{\sheaf{F}}_{\ell})_{\ell\in\Lambda}$, is a compatible system
of free $\Zz_{\ell}$-modules of rank $2kg$, which is tamely ramified,
and such that we have
$$
\det(1-T\frob_{\uple{t}}\mid \tilde{\sheaf{F}}_{\ell})=
\prod_{1\leq j\leq k}{P_{C_{t_j}}(T)}
$$
for any $\uple{t}=(t_1,\ldots, t_k)\in V(\Fp_q)$.
\par
To compute the geometric monodromy group of
$\sheaf{F}_{\ell}=\tilde{\sheaf{F}}_{\ell}/\ell\tilde{\sheaf{F}}_{\ell}$,
we appeal to Lemma~\ref{lm-surject} and induction to ensure that we
have a surjective homomorphism
\begin{equation}\label{eq-surjective}
\pioneg{V}\fleche{\prod p_{j,*}}\pioneg{U}^k
\end{equation}
and we observe that the representation $\rho_{\ell}$ corresponding to
$\sheaf{F}_{\ell}$ factors as
\begin{equation}\label{eq-surj2}
  \pione{V}\fleche{\prod p_{j,*}}
  \pione{U}^k\ra CSp(2g,\Fp_{\ell})^k,
\end{equation}
the last homomorphism being $(\rho'_{\ell},\ldots, \rho'_{\ell})$
where $\rho'_{\ell}$ corresponds to the sheaf $R^1f_!\Fp_{\ell}$ on
$U$. 
\par
Then we invoke (as in~\cite{chavdarov},~\cite{k1},~\cite{lsieve} for
$k=1$) the remarkable theorem of J-K. Yu according to which the image
of $\rho'_{\ell}$ restricted to $\pioneg{U}$ (i.e., the geometric
monodromy group modulo $\ell$) is equal to $Sp(2g,\Fp_{\ell})$ for all
odd primes (C. Hall~\cite{hall} has given another proof, whereas Yu's
proof is unpublished).  This together with~(\ref{eq-surj2})
and~(\ref{eq-surjective}) immediately implies that the geometric
monodromy group $G_{\ell}^g$ of $\sheaf{F}_{\ell}$ is
$Sp(2g,\Fp_{\ell})^k$, as needed to apply Theorem~\ref{th-many-k}.
\par
We note also that the value of $C$ above, and $\gamma=29kg^2$, leads
by trivial bounds to
$$
C^{2\gamma^{-1}}\ll k^{(4g^2)^{-1}},
$$
for $g\geq 1$, $k\geq 1$, with an absolute implied constant.  Applying
Corollary~\ref{cor-indep}, we find that the number of $\uple{t}\in
V(\Fp_q)$ for which either
$\reladd{\zeros(\uple{C}_{\uple{t}})}\not=0$ or
$\relmult{\nzeros(\uple{C}_{\uple{t}})}\not=0$ is at most
$$
\ll gc^k k^{(4g^2)^{-1}}q^{k-\gamma^{-1}}(\log q) \ll c_1^k
q^{k-\gamma^{-1}}(\log q)
$$
for any $c_1>c$, where the implied constants depends only on $g$. This
concludes the proof of Theorem~\ref{th-2}.
\end{proof}

It is clear that, \emph{mutatis mutandis}, we have proved the
following more general statement
instead of Theorem~\ref{th-2}:

\begin{proposition}
  Let $p$ be a prime number, $q\not=1$ a power of $p$, and $k\geq 1$
  integers. Let $U_1$, \ldots, $U_k$ be smooth affine curves over
  $\Fp_q$ and
$$
\mathcal{C}_j\fleche{f_j} U_j
$$
families of smooth projective curves of genus $g_j\geq 1$ such that,
for some set $\Lambda$ of primes of positive density, the geometric
monodromy of $R^1f_{j,!}\Fp_{\ell}$ is $Sp(2g_j,\Fp_{\ell})$ for
$\ell\in\Lambda$.  Let $U=U_1\times\cdots\times U_k$.
\par
Then, with obvious notation, we have
\begin{gather*}
  |\{\uple{t}\in U(\Fp_q)\,\mid\,
  \reladd{\zeros(\uple{C}_{\uple{t}})}\not=0 \}|\ll
  c^kq^{k-\gamma^{-1}}(\log q),
  \\
  |\{\uple{t}\in U(\Fp_q)\,\mid\,
  \relmult{\nzeros(\uple{C}_{\uple{t}})}\not=0 \}|\ll
  c^kq^{k-\gamma^{-1}}(\log q),
\end{gather*}
where $\gamma=29(g_1^2+\cdots+g_k^2)>0$ for some constant $c\geq 1$
depending only on $(g_1,\ldots, g_k)$.  In both estimates, the implied
constant depends only on $\Lambda$, $(g_1,\ldots, g_k)$ and the
Euler-Poincar\'e characteristic of the curves $\bar{U}_i$.
\end{proposition}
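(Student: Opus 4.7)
The plan is to mirror the proof of Theorem~\ref{th-2} with the obvious modifications for having a product of general smooth affine curves as parameter space and possibly differing genera. First I would set $V=U_1\times\cdots\times U_k$ and verify that it satisfies the geometric hypotheses of Theorem~\ref{th-many-k}: its dimension is $d=k$, each $\bar U_j$ is a smooth affine curve whose smooth compactification adds finitely many points, so the product has a natural compactification in which $\bar V$ is the complement of a divisor with normal crossings; an explicit affine embedding into some $\mathbf{A}^N$ cut out by $r$ equations of degree $\leq\delta$ is inherited from embeddings of each $U_j$, with $(N,r,\delta)$ depending only on the $g_j$ and on the Euler--Poincar\'e characteristics of the $\bar U_j$.

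Next I would form the family of compatible tame lisse sheaves
$$
\tilde{\sheaf{F}}_{\ell}=\bigoplus_{1\leq j\leq k}p_j^*R^1f_{j,!}\Zz_{\ell},
$$
where $p_j\colon V\to U_j$ is the $j$-th projection, so that
$$
\det(1-T\frob_{\uple{t}}\mid\tilde{\sheaf{F}}_{\ell})=\prod_{1\leq j\leq k}P_{C_{t_j}}(T)
$$
at each $\uple{t}=(t_1,\ldots,t_k)\in V(\Fp_q)$. Iterating Lemma~\ref{lm-surject} produces a surjection $\pioneg{V}\twoheadrightarrow\pioneg{U_1}\times\cdots\times\pioneg{U_k}$; composing with the individual monodromy representations $R^1f_{j,!}\Fp_{\ell}\to Sp(2g_j,\Fp_{\ell})$ (surjective for $\ell\in\Lambda$ by hypothesis) yields the large geometric monodromy $\prod_j Sp(2g_j,\Fp_{\ell})$ modulo $\ell$ that is required to run the sieve. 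Tame ramification is inherited factor by factor, exactly as in the proof of Theorem~\ref{th-2}.

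Then I would run the sieve for Frobenius as in Theorem~\ref{th-many-k} and Corollary~\ref{cor-indep}, with the cosmetic modification of replacing $Sp(2g)^k$ by $\prod_j Sp(2g_j)$ throughout. The Goursat-type Lemma~\ref{lm-goursat} adapts to this setting, since each $Sp(2g_j,\Fp_{\ell})$ equals its own commutator subgroup and, for distinct $g_j$, the factors are even more plainly non-isomorphic. On the algebraic side, Proposition~\ref{pr-w2g-k} applies factor by factor, its first part showing that non-trivial relations are forced to live inside a single $M_j$. Counting the local densities $|\tilde{\Omega}_{i,\ell}|/|Sp(2g_j,\Fp_\ell)|$ and the sizes $|CSp(2g_j,\Fp_\ell)|\leq (\ell+1)^{2g_j^2+g_j+1}$ leads to the final sieve exponent $\gamma=29(g_1^2+\cdots+g_k^2)$, with constants depending on the stated data.

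The only step requiring mild care is the iterated surjection of geometric fundamental groups: Lemma~\ref{lm-surject} handles two factors, and a straightforward induction on $k$ handles the product of $k$ affine curves. Beyond that I expect no genuine obstacle — every remaining ingredient (tame ramification, compatibility of the system, maximal monodromy at each $\ell\in\Lambda$, and the reduction of multiplicative to additive relations via the squaring trick of Section~\ref{sec-simple}) is a direct transcription of what is already done in the proofs of Theorems~\ref{th-2} and~\ref{th-many-k}.
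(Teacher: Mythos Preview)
Your proposal is correct and matches the paper's approach exactly: the paper itself gives no separate proof, stating only that ``\emph{mutatis mutandis}, we have proved the following more general statement instead of Theorem~\ref{th-2}'', and your outline spells out precisely those modifications (product of distinct curves, $\prod_j Sp(2g_j)$ in place of $Sp(2g)^k$, iterated use of Lemma~\ref{lm-surject}, factor-by-factor application of Proposition~\ref{pr-w2g-k}). One small caveat: your claim that the embedding data $(N,r,\delta)$ depend only on the $g_j$ and $\chi_c(\bar U_j)$ is not obviously true for arbitrary smooth affine curves; to get the implied constant to depend only on the Euler--Poincar\'e characteristics as stated, one should invoke the K\"unneth argument of Remark~\ref{rm-kunneth} (trivially extended to a product of distinct curves), which yields $C'=\prod_j(1-\chi_c(\bar U_j))$ in the tame case, rather than going through an explicit affine embedding of $V$.
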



\begin{remark}\label{rm-kunneth}
  We explain now how to replace the constant $C$
  in~(\ref{eq-large-sieve}) by a smaller one in the case above where
  $V=U^k$ with $U$ a smooth affine curve, complement of the zeros of a
  polynomial $f$ of degree $m$ in the affine line.
\par
More precisely, in Theorem~\ref{th-sieve}, suppose that $V$ is of this
type. Let $p_i$, $1\leq i\leq k$, denote the $i$-th coordinate map
$V\ra U$. Assume then that we have sheaves $(\sheaf{G}_{\ell})$ on the
curve $U$ which arise by reduction modulo $\ell$ from a compatible
system $(\tilde{\sheaf{G}}_{\ell})_{\ell}$ such that the sheaves
$\sheaf{F}_{\ell}$ are given by
$$
\sheaf{F}_{\ell}=\bigoplus_{1\leq j\leq k}{p_j^*\sheaf{G}_{\ell}}
$$
(note it is not necessary here to assume that the sheaves are tamely
ramified, but they must form a compatible system, which is not assumed
in~Theorem~\ref{th-sieve}). Let $\rho_{\ell}$ (resp. $\tau_{\ell}$) be
the representations of $\pione{V}$ associated to $\sheaf{F}_{\ell}$
(resp. $\sheaf{G}_{\ell}$).
\par
From the proof of the large sieve inequality and the setting of the
sieve for Frobenius, a bound for $C$ derives from a uniform estimate
for the ``exponential sums''
$$
S(\pi,\pi')=\sum_{\uple{t}\in V(\Fp_q)^k}{
\Tr(\pi(\rho_{\ell}(\frob_{\uple{t}})))
\overline{
\Tr(\pi'(\rho_{\ell'}(\frob_{\uple{t}})))}
}
$$
for primes $\ell$, $\ell'\in \Lambda$ and irreducible representations
$\pi$ (resp. $\pi'$) of $G_{\ell}$ (resp. $G_{\ell'}$); see~\cite[\S
2.2; Prop. 2.9; \S 8.3]{lsieve}.
\par
For sheaves of the type above, the monodromy group $G_{\ell}$ of
$\sheaf{F}_{\ell}$ is clearly isomorphic to $H_{\ell}^k$, where
$H_{\ell}$ is the monodromy group of
$\sheaf{G}_{\ell}$. Correspondingly, the representations $\pi$ and
$\pi'$ factor as external tensor products
$$
\pi=\boxtimes_{1\leq j\leq k} \pi_j,\quad\quad 
\pi'=\boxtimes_{1\leq j\leq k} \pi'_j,
$$
where $\pi_j$ (resp. $\pi'_j$) are uniquely-defined irreducible
representations of $G_{\ell}$ (resp. $G_{\ell'}$), and since
$\frob_{\uple{t}}=(\frob_{t_1},\ldots,\frob_{t_k})$, the exponential
sum itself factors\footnote{\ Cohomologically speaking, this reflects
  the K\"unneth formula for the groups
  $H^i_c(\bar{V},\pi(\sheaf{F}_{\ell})\otimes
  \pi'(\sheaf{F}_{\ell'}))$ (where the tensor product is the external
  one if $\ell\not=\ell'$) which occur after applying the
  Grothendieck-Lucite's trace formula directly to $S(\pi,\pi')$.} as
$$
S(\pi,\pi')=
\prod_{1\leq j\leq k}{\ 
\sum_{t\in U(\Fp_q)}{
\Tr(\pi_j(\tau_{\ell}(\frob_{t}})))
\overline{
\Tr(\pi'(\tau_{\ell'}(\frob_{t})))}
},
$$
where each term is now a $1$-variable sum of the type discussed for
the large sieve on a parameter curve. Using the bounds
in~\cite[Prop. 8.6 (2), Prop. 8.7]{lsieve}, it is easy to deduce that
the constant $C$ in~(\ref{eq-constants}) may be replaced with
$$
C'=(1-\chi_c(\bar{U})+mw)^k,
$$
where $w$ is the sum of Swan conductors of $\sheaf{F}$ at the points
at infinity (see~\cite[\S 4]{k1} for the definition; it vanishes in
the case of tame ramification). Thus we obtain a bound which ``only''
grows exponentially in $k$. However, this turns out to be a fairly
inconsequential gain in the applications in this paper at least.
\end{remark}

\section{Examples of relations among zeros}
\label{sec-numerics}

In this section we wish to give explicit examples of $L$-functions
over finite fields where the (inverse) roots satisfy some
multiplicative relations (for additive relations, see
Remark~\ref{rm-frob-tori-lindep}). Numerically, we tried to find such
relations by looking (using \textsc{GP}'s function \texttt{lindep})
for ``small'' dependency relations between the components of the
vectors $(\pi,\theta_1,\ldots, \theta_g)$, where $\pm \theta_j\in
[0,2\pi[$ are the arguments of the $2g$ inverse roots considered. It
is easy to confirm rigorously a relation obtained this way, since all
numbers involved are algebraic (but on the other hand, if some of the
large relations found by \texttt{lindep} are genuine, we have missed
them...)
\par
It is interesting to remark here that in the case of linear relations
between roots of unrestricted rational polynomials, Berry, Dubickas,
Elkies, Poonen and Smyth~\cite{bdeps} have found for any integer
$n\geq 1$ what is the largest degree $d=d(n)$ for which there exists
an algebraic number $\alpha$ of degree $d$ over $\Qq$ such that its
conjugates span a $\Qq$-vector space of dimension $n$; in fact, they
show that $d(n)$ is the same as the maximal order of a finite subgroup
of $GL(n,\Qq)$, and then invoke results of Feit, Weisfeiler -- which
depend on the classification of finite simple groups -- that give this
value. As we already recalled at the beginning of
Section~\ref{sec-algebraic}, except for seven exceptional cases, such
a group is isomorphic to $W_{2n}$, so that $d(n)=2^n n!$. Among the
remaining cases, for instance, we have $d(4)=1152$. There are also
similar (less complete) results for multiplicative relations.

\begin{example}
  We started by looking at purely numerical examples using previous
  computations of roughly $160000$ zeta functions of hyperelliptic
  curves of genus $3$ in two particular families of the type occurring
  in Theorem~\ref{th-2} (computed using \textsc{Magma}~\cite{magma},
  see~\cite[End of \S 8.6]{lsieve}), over fields $\Fp_{5^k}$, $k\leq
  8$. We had found only about $50$ non-irreducible $L$-functions,
  and among these only three curves over $\Fp_{5^8}$ in the family
$$
y^2=(x^2+6x-1)(x-t)
$$
which have irreducible polynomial $L$-functions (of degree $6$) having
Galois groups the dihedral group $D_{12}$. However, upon examination
of the roots, it turns out that there are no non-trivial relations
(although there certainly exist self-reciprocal polynomials with this
Galois group and some interesting multiplicative relations).
\par
This confirms of course the ``genericity'' of the independence of the
roots, and suggests that the upper bounds in Proposition~\ref{pr-1}
are far from the truth (however, we only did very spotty checks for
relations involving multiple zeta functions, i.e., corresponding to
$k\geq 2$).
\end{example}

\begin{example}
  In view of the lack of success of the previous item, a natural way
  to try to construct examples without looking at curves directly is
  to use the fact that for (most) choice of polynomial $P$ satisfying
  the functional equation (with respect to a power of prime $q\not=1$)
  and Riemann Hypothesis, there exists, if not an algebraic curve $C$,
  at least an abelian variety $A/\Fp_q$ where the $L$-function (more
  precisely, the reversed characteristic polynomial of the geometric
  Frobenius acting on $H^1(\bar{A},\Zz_{\ell})$, which we call the
  $L$-function to simplify) is exactly given by this polynomial.  This
  is due to Honda and Tate (see~\cite{tate}) and allows us to simply
  look for polynomials with roots satisfying non-trivial relations.
\par
One simple way to do this is to consider $q=p$ and take a polynomial
which splits as a product
$$
\prod_{1\leq j\leq g}{(1-a_jT+pT^2)}
$$
where $a_j\in\Zz-\{0\}$ (to avoid ordinarity issues) satisfies
$|a_j|<2\sqrt{p}$. Honda-Tate theory then implies that this polynomial
\emph{is} the $L$-function for some abelian variety $A/\Fp_p$ of
dimension $g$, which is in fact isogenous to the product of the
elliptic curves corresponding to the factors $1-a_jT+pT^2$. Since the
inverse roots $\alpha_j$, $\beta_j$ with
$$
\prod_{1\leq j\leq g}{(1-a_jT+pT^2)}=
\prod_{1\leq j\leq g}{(1-\alpha_jT)(1-\beta_jT)}
$$
are given by
$$
\alpha_j=\frac{a_j+i\sqrt{4p-a_j^2}}{2},\quad\quad
\beta_j=\frac{a_j-i\sqrt{4p-a_j^2}}{2},
$$
one can try to select $p$ and $a_j$ so that the quadratic fields
$\Qq(i\sqrt{4p-a_j^2})$ are identical for all $j$; this locates all
$2g$ roots in the same imaginary quadratic field, and one may hope for
non-trivial relations. Of course we can take $a_j=a$ for all $j$, but
this is cheating, and similarly using signs $a_j=\pm a$ leads to
factors which are all geometrically isomorphic elliptic curves. More
interestingly, one should look for $a_j$'s with distinct absolute
values, so that $A$ becomes a product of $g$ pairwise non-isogenous
elliptic curves.
\par
This can happen, but this type of behavior is actually pretty
restricted: we need to find distinct $a_j$'s, and integers $f_j$, such
that
$$
4p=a_j^2+df_j^2,\quad 1\leq j\leq g,
$$
for a common squarefree value of $d$. This means that
$$
p=N_{\Qq(\sqrt{-d})/\Qq}\Bigl(\frac{a_j}{2}+\frac{f_j\sqrt{-d}}{2}
\Bigr),
$$
and by standard properties of quadratic fields, the ideal
$\mathfrak{a}$ generated by $w_j=\frac{a_j}{2}+\frac{f_j\sqrt{-d}}{2}$
in the ring of integers of $\Qq(\sqrt{-d})$ is unique up to
conjugation.\footnote{\ This $w_j$ is necessarily an integer because
  its norm ($p$) and its trace ($a_j$) are.} The only way to obtain
distinct values is therefore to replace $w_j$ by some other generator
of $\mathfrak{a}$, i.e., by $\eps w_j$ where $\eps\in \Qq(\sqrt{-d})$
is a unit.  If $\Qq(\sqrt{-d})$ is of discriminant $\not=-4$, $-3$,
only $-w_j$ is permitted, which simply amounts to replacing $a_j$ by
$-a_j$.  So the interesting possibilities are when $d=1$ or $d=3$.
\par
In the first case, the units are $\pm 1$, $\pm i$, and if we write
$p=N_{\Qq(i)/\Qq}(a/2+ib/2)$, then besides $a_1=|a|$, we can take
$a_2=|b|$ to obtain the two distinct positive solutions.  Note
moreover that this is possible if and only if $p\equiv 1\mods{4}$ by
Fermat's theorem on primes which are sums of two squares.
\par
In the second case where $d=3$, which can occur if and only if $4p$ is
of the form $a^2+3b^2$, i.e., if and only if $p\equiv 1\mods{3}$,
there are six units, equal to $\pm 1$, $\pm j$, $\pm j^2$ where
$j=(-1+i\sqrt{3})/2$. Writing
$$
p=N_{\Qq(\sqrt{-3})/\Qq}\Bigl(\frac{a}{2}+\frac{b\sqrt{-3}}{2}\Bigr),
$$
with $a\geq 1$, $b\geq 1$ integers, and multiplying by $j$ and $j^2$,
we find that there are three possible (positive) values for $a$,
namely
$$
a,\quad \frac{a+3b}{2},\quad \frac{|a-3b|}{2}.
$$
\par
Note in passing the following amusing property: if those three values
(say $x$, $y$, $z$) are ordered so that $x<y<z$, then we have
$z=x+y$. Indeed, this amounts to the identities
\begin{gather*}
\frac{a+3b}{2}+\frac{a-3b}{2}=a,\quad\text{ if }
a>3b,\\
\frac{3b-a}{2}+a=\frac{a+3b}{2},\quad\text{ if }
a<3b.
\end{gather*}
\par
Here is a simple example for $d=3$, with $g=3$, $p=541$ (the $100$-th
prime); we find that the three values of $a$ are $a_1=17$, $a_2=29$,
$a_3=46$, and indeed we have
$$
4p-a_1^2=1875=3\cdot 5^4,\quad
4p-a_2^2=1323=3^3\cdot 7^2,\quad
4p-a_3^2=48=3\cdot 2^4,
$$
so the corresponding inverse roots are 
$$
\alpha_1=\frac{17+25i\sqrt{3}}{2},\quad
\alpha_2=\frac{29+21i\sqrt{3}}{2},\quad
\alpha_3=\frac{46+4i\sqrt{3}}{2},
$$
in $\Qq(\sqrt{-3})$. If we let $\tilde{\alpha}_j=\alpha_j/\sqrt{p}$,
then the reader will easily check that we have the relation
$$
\tilde{\alpha}_1^2\tilde{\alpha}_2^{-4}\tilde{\alpha}_3^2=1.
$$
\end{example}

\begin{example}
  Another type of examples can be obtained from the work of
  Katz~\cite{katz-g2} on $G_2$-equidistribution for some families of
  exponential sums. Precisely, for $p\not=2$, $7$, consider the
  exponential sums defined by
$$
S_m(t)=\sum_{x\in\Fp_{q^m}^{\times}}{ \chi_2(N_{\Fp_{q^m}/\Fp_q})(x))
  e\Bigl(\frac{\Tr_{\Fp_{q^m}/\Fp_p}(x^7+tx)}{p}\Bigr)},\quad\quad
m\geq 1,\quad t\in \Fp_{q},\quad q=p^{\nu},
$$
where $\chi_2$ is the quadratic character of $\Fp_q$. Katz shows that
it has the property that, for $t\in\Fp_q$, the zeta function
$$
\exp\Bigl(\sum_{m\geq 1}{S_m(t)\frac{T^m}{m}}\Bigr)
$$
is a polynomial of degree $7$ in $\Zz[\zeta_7][T]$, where $\zeta_7$ is
a primitive $m$-th root of unity, and that when properly normalized by
dividing by $(-G)^m$, where $G$ is the Gauss sum given by
$$
G=\sum_{x\in
  \Fp_q^{\times}}{\chi_2(x)e\Bigl(-7\frac{\Tr_{\Fp_q/\Fp_p}(x)}{p}\Bigr)},
$$
it is the characteristic polynomial of a semisimple matrix in
$SO(7,\Cc)$ which lies in a conjugate of the exceptional group
$G_2$. By the known structure of a maximal torus in such a group (as
explained in~\cite[(5.5)]{katz-g2}), its inverse roots are of the form
\begin{equation}\label{eq-g2-torus}
(1,\tilde{\alpha},\tilde{\beta},\tilde{\alpha}\tilde{\beta}
,\tilde{\alpha}^{-1},\tilde{\beta}^{-1},(\tilde{\alpha}\tilde{\beta})^{-1}),
\end{equation}
and we see clearly some interesting relations.
\par
Performing the computations (with \textsc{Magma}) for $p=5$, $t=1$, we
obtain that the inverse roots are $\sqrt{5}$, and numbers given
approximately by
\begin{gather*}
\alpha=1.809016994374947424102293417 - i  \cdot
1.314327780297834015064172712,
\quad\quad
5/\alpha=\bar{\alpha},\\
\beta=-1.225699835949638884074294475 + i \cdot
1.870203174030305277157650105,
\quad\quad
5/\beta=\bar{\beta},\\
\gamma=0.1076658471997440358697076407 - i \cdot
2.233474438032985720105383483,
\quad\quad
5/\gamma=\bar{\gamma},
\end{gather*}
the first two of which are roots of 
$$
P_1=X^4 - 5X^3 + 15X^2 - 25X +25
$$ 
(which is Galois over $\Qq$ with cyclic group $\Zz/4\Zz$), while the
other four are roots of
$$
P_2=X^8 + 5X^6 - 20X^5 + 5X^4 - 100X^3 + 125X^2 + 625,
$$
which has (non-abelian) splitting field of degree $16$ over $\Qq$ (the
Galois group is generated by the permutations 
$(1\ 2\ 6\ 5)(3\ 7\ 4\ 8)$ and $(2\ 4)(3\ 5)$, for some
ordering of the roots). Corresponding to the
pattern~(\ref{eq-g2-torus}), one finds that
$$
\frac{\alpha}{\sqrt{5}}\cdot \frac{\beta}{\sqrt{5}}\cdot
\frac{\gamma}{\sqrt{5}}=1.
$$
\par
Note that one finds that there are four roots (not related by
inversion), say $x$, $y$, $z$, $t$, of $P_2$ which satisfy a relation
$x^{-1} y^3 z t^{-3} = 1$.  So it would be interesting to know if this
polynomial $P_2$ corresponds to an algebraic curve of genus $4$ over
$\Fp_5$ (experimentally, what would be the number of points of this
curve over $\Fp_{5^n}$, i.e.,
$$
5^n+1-(x^n+y^n+z^n+t^n+x^{-n}+y^{-n}+z^{-n}+t^{-n})
$$
are non-negative integers, as they should; the sequence starts $6$,
$36$, $66$, $596$, $3126$,..., and only the first two terms could have
been negative).
\end{example}

\begin{example}
  Other systematic investigations can be done in cases where the zeta
  functions of families of curves are explicitly known, or computable
  with easily available tools. We first discuss briefly some examples
  related to modular curves (see the next example for the case of
  Fermat curves).
\par
Let $N\geq 1$ be an integer, and consider the modular curve $X_0(N)$
over the finite field $\Fp_p$ for some $p\nmid N$. From
Eichler-Shimura theory and Atkin-Lehner theory, the polynomial
$L$-function of $X_0(N)/\Fp_p$ is given by
$$
P_{N}(T)=\prod_{f}{(1-a_f(p)T+pT^2)^{m(f)}}
$$
where $f$ runs over the finite set of primitive forms of weight $2$
for any $\Gamma_0(M)$ where $M\mid N$ (``newforms'' in Atkin-Lehner
terminology), with $a_f(p)$ being the $p$-th Hecke eigenvalue of
$f$. If $f$ is of conductor $M$, then the multiplicity $m(f)$ of $f$
is $d(N/M)$, the number of divisors of $N/M$. This is often $\geq 2$,
showing the existence of multiple roots of $P_N$, hence of some
multiplicative relations. It is natural to restrict to the ``new''
part, which means taking instead of $X_0(N)$ the new part
$J_0(N)^{new}$ of its Jacobian variety. The $L$-function of this
abelian variety is
$$
P_{N}^*(T)=\prod_{\text{$f$ level $N$}}{(1-a_f(p)T+pT^2)}\in \Zz[T]
$$
(note that $P_N^*=P_N$ if $N$ is a prime for instance). 
\par
The $a_f(p)$ are totally real algebraic integers, with usually
distinct degrees. We used \textsc{Magma} to compute some of the
polynomials $P_N^*$, taking levels $N$ prime roughly up to $300$ and
primes $p$ in $\{5,7,11,13\}$ (coprime with $N$); this amounts to
about $1000$ cases.
\par
What happens experimentally is that a large majority (roughly $85\%$)
of the splitting fields of $1-a_f(p)T+pT^2$ (over $\Qq$) have Galois
group $W_{2\deg(a_f(p))}$. This does not exclude cross-relations for
different $f$ of the same level, but small-scale tests only found a
few of those in remaining multiple factors (e.g., $(1+T+5T^2)^2$
divides $P^*_{167}$ for the prime $p=5$).
\par
Even when the Galois group of a factor is smaller than
$W_{2\deg(a_f(p))}$, most of the time there is no extra relation. The
few exceptions correspond to factors of degree $4$ of the type
$$
1-aT^2+p^2T^{4}
$$
(e.g., $1+17T^2+121T^4$ divides $P_{67}^*$ and $P_{313}^*$ for the
prime $11$, $1+6T^2+49T^4$ divides $P_{29}^*$ for the prime $7$),
where there are relations of the type $\alpha^2=\beta^2$. Similar even
polynomials could probably occur also for other values.
\end{example}

\begin{example}
  Let $F_{m}$ be the Fermat curve defined by
$$
F_m\,:\, x^m+y^m+z^m=0
$$
in the projective plane (more general diagonal hypersurfaces could
also be considered). The zeta functions of these curves over all
finite fields are well-known, going back to Weil at least. We assume
$q\not=1$ is a power of a prime for which $q\equiv 1 \mods{m}$ and we
consider $F_m/\Fp_q$. Let then $X_m$ be the set of $m-1$ non-trivial
characters in the cyclic group (of order $m$) of characters of order
$m$ of $\Fp_q^{\times}$. Let
$$
g(\chi)=\sum_{x\in \Fp_q^{\times}}{\chi(x)e(\Tr(x)/p)}
$$
for $\chi\in X_m$ be the associated Gauss sum, and let moreover $A_m$
be the set of $3$-tuples $(\chi_0,\chi_1,\chi_2)\in X_m^3$ such that
$\chi_0\chi_1\chi_2$ is trivial.  Then (see, e.g.,~\cite[\S 11.3,
Th. 2]{ireland-rosen}), the $L$-function of $F_m$ is the
polynomial
$$
\prod_{(\chi_0,\chi_1,\chi_2)\in A_m}{(
1-q^{-1}g(\chi_0)g(\chi_1)g(\chi_2)T
)}\in \Zz[T],
$$
so that, in particular, the distinct normalized inverse roots are the
numbers
\begin{equation}\label{eq-inv-gauss}
\frac{g(\chi_0)g(\chi_1)g(\chi_2)}{q^{3/2}},\quad\quad
(\chi_0,\chi_1,\chi_2)\in A_m.
\end{equation}
\par
We can see here many multiplicative relations: first of all, in $A_m$,
permutations are permitted, and since the inverse roots only depend on
the set $\{\chi_0,\chi_1,\chi_2\}$, there will typically be
multiplicities among the numbers~(\ref{eq-inv-gauss}), which is of
course a well-known fact.\footnote{\ It is interesting to note that
  Ulmer~\cite{ulmer} has recently used properties of zeta functions of
  Fermat curves to construct examples of abelian varieties
  $A/\Fp_q(t)$ which have bounded ranks in towers of extensions of the
  form $\bar{\Fp}_q(t^{1/d})$, $d$ ranging over powers of suitable
  primes or integers not divisible by $p$; the crucial properties for
  him are however the prime factorizations of the inverse roots.}
\par
But even among roots taken without the obvious multiplicities arising
from permutations, and with only one of each pair $(\alpha,q/\alpha)$
preserved, non-trivial relations will arise because the order of $A_m$
modulo those restrictions grows quicker than $m$. Indeed, let $B_m$ be
the set of different triplets $(\chi_0,\chi_1,\chi_2)$ modulo
permutations, and modulo the inversions. Since we have
$$
|A_m|=\frac{(m-1)^3-(m-1)}{m},
$$
there are at least $|A_m|/12$ elements in $B_m$, which is roughly
$m^2/12$ as $m$ gets large. A product restricted to
representatives of $B_m$, with exponents $\uple{n}=(n_b)$, leads to an
expression of the type
$$
q^{-3 U(\uple{n})/2}\prod_{\chi\in X_m}{g(\chi)^{u_{\chi}(\uple{n})}}
$$
where the $u_{\chi}$ are linear forms with integral coefficients and
$U(\uple{n})$ is the sum of the $n_b$. So, to produce a relation, it
\emph{suffices} to find $\uple{n}$ such that
$$
U(\uple{n})=0\quad\text{ and }\quad 
u_{\chi}(\uple{n})=0,\quad\text{ for } \chi\in X_m.
$$
\par
These are $m$ linear relations with integral coefficients, so quite
quickly there will less of them than there are coefficients available,
guaranteeing the existence of non-zero solutions.
\par
Here is the example of $m=7$: denoting the characters in $X_m$ by
$\omega^j$, $1\leq j\leq 6$, for some generator $\omega$, there are
$30$ elements in $A_7$, and $8$ basic triplets up to permutation
(listed as exponents of $\omega$), namely
$$
(1,1,5)\quad (1,2,4)\quad (1,3,3)\quad (2,2,3)\quad (2,6,6)\quad
(3,5,6)\quad (4,4,6)\quad (4,5,5);
$$
\par
Among those, it is easy to check that the inverse
roots~(\ref{eq-inv-gauss}) corresponding to the last four ones are
inverses of those corresponding to the first four ones, leaving $4$
elements in $B_7$. Then one finds that the matrix of equations, with
columns indexed by the remaining $4$ triplets in order, is
$$
\begin{pmatrix}
1 & 1 & 1 & 1 \\
2 & 1 & 1 & 0 \\
-1 & 1 & 0 & 2\\
0 & -1 & 2 & 1 \\
0 & 1 & -2 & -1\\
1 & -1 & 0 & -2\\
-2 & -1 & -1 &0
\end{pmatrix}
$$
and even though we still have more relations than parameters in this
particular case, one checks that the integral kernel of this matrix is
non-zero, being of rank $1$ and generated by the row vector
$$
(1, -1, -1, 1)
$$
(in fact the first equation $U(\uple{n})=0$ is redundant here, since
the sum of coefficients in each column is constant). This means for
any Fermat curve $F_7$ over $\Fp_q$ with $q\equiv 1\mods{7}$, there
will be four roots $\tilde{\alpha}_1$, \ldots, $\tilde{\alpha}_4$,
such that
$$
\tilde{\alpha}_1\tilde{\alpha}_2^{-1}
\tilde{\alpha}_3^{-1}\tilde{\alpha}_4=1. 
$$
\end{example}

\section{Frobenius tori and multiplicative independence}
\label{sec-frob-tori}

In this section we review briefly the theory of Frobenius tori of
Serre, in the version of C. Chin~\cite[\S 5]{chin}, and explain how it
leads to more direct proofs of statements of multiplicative
independence of normalized zeros of $L$-functions in the case of
families with large symplectic monodromy. In
Remark~\ref{rm-frob-tori-lindep}, we give examples showing that, on
the other hand, this technique does not lead (at least directly) to
results concernant linear independence.
\par
Consider a finite field $\Fp_q$ and a continuous representation
$$
\Gal(\bar{\Fp}_q/\Fp_q)\fleche{\rho} GL(r,\Qq_{\ell})
$$
for some $\ell\not=p$.  Serre defines the Frobenius torus $\Tt_{\rho}$
associated to $\rho$ to be the connected component of the identity of
the diagonalizable algebraic group $\Hh_{\rho}/\Qq_{\ell}$ which is
the Zariski closure in $GL(r)/\Qq_{\ell}$ of the subgroup generated by
the semisimple part of $\rho(\frob_{\Fp_q})$. The character group
$\Hom(\Hh_{\rho},\G_m)$ of $\Hh_{\rho}$ is canonically isomorphic to
the multiplicative group $\mult{M_{\rho}}$ generated by the set
$$
M_{\rho}=\{\lambda_1,\ldots,\lambda_r\}
$$
of eigenvalues of $\rho(\frob_{\Fp_q})$. Since a diagonalizable group
is determined by its character group (see, e.g.,~\cite[\S
3.2]{springer} for the basic theory), and since there is an exact
sequence
$$
0\ra \relmul{M_{\rho}}\ra  \Zz^r\ra\mult{M_{\rho}}\ra 0,
$$
we see that to know the group $H_{\rho}$ is equivalent to knowing the
group of multiplicative relations among the eigenvalues of
$\rho(\frob_{\Fp_q})$, showing the relevance of this theory to
questions of multiplicative independence of Frobenius eigenvalues.
\par
It follows, in particular, that if the image of $\rho$ lies in a group
(isomorphic to) $Sp(2g,\Qq_{\ell})^k$ for some non-degenerate
alternating pairing and $k\geq 1$, then we have:
$$
\text{$\relmult{M_{\rho}}=0$ if and only if $\Tt_{\rho}=\Hh_{\rho}$ is
  a maximal torus in $Sp(2g)^k/{\Qq_{\ell}}$.}
$$
\par
Serre proved the first statement of the following type in the case of
abelian varieties over number fields; the precise statement is a very
special case of~\cite[Th. 5.7]{chin}:

\begin{theorem}[J-P. Serre; C. Chin]\label{th-chin}
  Let $V/\Fp_q$ be a smooth affine algebraic variety of dimension
  $d\geq 1$. Let $k\geq 1$, and let
$$
\rho\,:\, \pione{V}\ra Sp(2g,\Qq_{\ell})^k
$$
be a continuous representation such that the image of
$\pioneg{V}$ under $\rho$ is Zariski dense in the
algebraic group $Sp(2g)^k/\Qq_{\ell}$.  Assume that the following
conditions hold:
\par
\emph{(1)} The representation $\rho$ is pointwise pure of weight $0$;
\par
\emph{(2)} There exists $C\geq 0$ such that, for every closed point
$x$ of $V$, with residue field of degree $n\geq 1$ over $\Fp_q$, every
eigenvalue $\alpha$ of $\rho(\frob_x)$ and every $p$-adic valuation
$v$ of $\Qq(\alpha)$, we have
$$
|v(\alpha)|\leq C |v(q^n)|.
$$
\par
\emph{(3)} There exists $D\geq 0$ such that, for every closed point
$x$ of $V$, with residue field of degree $n\geq 1$ over $\Fp_q$, every
eigenvalue $\alpha$ of $\rho(\frob_x)$ and every $p$-adic valuation
$v$ of $\Qq(\alpha)$, we have
$$
D\frac{v(\alpha)}{v(q^n)}\in \Zz.
$$
\par
Then there exists a non-empty conjugacy-invariant Zariski open subset
$W_k\subset Sp(2g)^k/\Qq_{\ell}$ such that, for any $x\in V(\Fp_q)$, the
Frobenius torus $\Tt_x$ associated to the local representation
$\rho_x$ defined by the composite
$$
\spec(\Fp_q)\fleche{x} \pione{V} \fleche{\rho}
Sp(2g,\Qq_{\ell})^k
$$
is a maximal torus in $Sp(2g)^k/\Qq_{\ell}$ if $\rho(\frob_{x,\Fp_q})\in
W_k$.
\end{theorem}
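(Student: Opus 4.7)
The plan is to translate the conclusion into a question about multiplicative independence of Weil eigenvalues, and then to exploit the boundedness provided by conditions $(1)$, $(2)$, $(3)$ to turn this into a genuine Zariski open condition on $\rho(\frob_x)$. First, recall the dictionary sketched just before the statement: for $g\in Sp(2g,\Qq_\ell)^k$ with regular semisimple part, the group $\Hh_g$ is a diagonalizable subgroup of the unique maximal torus $T_g$ of $Sp(2g)^k$ containing $g^{ss}$, and its character group is identified with the subgroup $\mult{M_g}$ of $\bar{\Qq}_\ell^{\times}$ generated by the eigenvalues $M_g$ of $g$ in the standard representation. Because the symplectic structure already imposes the ``trivial'' relations $\alpha\cdot\alpha^{-1}=1$ for each pair of inverse eigenvalues, the Frobenius torus $\Tt_g=\Hh_g^{0}$ is a maximal torus of $Sp(2g)^k$ (equivalently, $\dim \Tt_g=gk$) if and only if $\relmult{M_g}=0$, i.e., if and only if there is no non-trivial multiplicative relation among the $\alpha\in M_g$.

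Next, $W_k\subset Sp(2g,\Qq_\ell)^k$ will be defined as the intersection of the regular semisimple locus with the complement of a \emph{finite} union of conjugacy-invariant proper closed subvarieties, each one parametrising those $g$ whose characteristic polynomial admits a fixed non-trivial multiplicative relation of total degree $\leq N_0$ among its roots. Each such locus is cut out by a polynomial identity between coefficients of the characteristic polynomial and is manifestly conjugacy invariant, so $W_k$ is Zariski open and conjugacy invariant by construction, and any $x\in V(\Fp_q)$ with $\rho(\frob_x)\in W_k$ automatically satisfies $\relmult{M_{\rho_x}}=0$, whence $\Tt_x$ is a maximal torus. The real content of the proof is the production of a uniform \emph{a priori} bound $N_0=N_0(C,D,g,k)$ on the total degree of any possible non-trivial relation between the eigenvalues of an arbitrary $\rho(\frob_x)$. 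Condition $(1)$ fixes the archimedean absolute values of the $\alpha$, while conditions $(2)$ and $(3)$ control, respectively, the sizes of their $p$-adic valuations and the denominators of their slopes; combined, these constraints confine the exponent vectors $(n_\alpha)$ realising a relation $\prod \alpha^{n_\alpha}=1$ to a fixed finite subset of $\Zz^{M_g}$. This finiteness statement is precisely the rationality theorem for Frobenius tori of Serre, in the formulation used by Chin \cite[\S 5]{chin}, and it is the main technical obstacle in the argument.

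It remains to check that $W_k$ is non-empty. This is automatic, since a generic element of any single maximal torus of $Sp(2g,\Qq_\ell)^k$ has multiplicatively independent eigenvalue pairs, and so satisfies none of the finitely many relations excluded in the definition of $W_k$; alternatively, one may combine the Zariski density hypothesis on $\rho(\pioneg{V})$ with a Chebotarev-type equidistribution argument to produce directly a closed point $x\in V$ with $\rho(\frob_x)\in W_k$. For the detailed implementation, including the precise form of the bound $N_0$, we refer to \cite[Th.~5.7]{chin}.
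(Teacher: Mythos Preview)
The paper does not actually prove this theorem: it is stated as a special case of \cite[Th.~5.7]{chin} and used as a black box. So your sketch is being compared not against an argument in the paper but against Chin's proof, to which you yourself ultimately defer for the ``main technical obstacle''. At that level your outline is broadly faithful to how the Serre--Chin argument runs.

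That said, one step in your write-up is stated incorrectly and would mislead a reader. You write that conditions (1)--(3) ``confine the exponent vectors $(n_\alpha)$ realising a relation $\prod \alpha^{n_\alpha}=1$ to a fixed finite subset of $\Zz^{M_g}$''. This is false as written: the set of such exponent vectors is a subgroup of $\Zz^{M_g}$, so it is either trivial or infinite. What conditions (1)--(3) actually buy is a \emph{uniform finiteness of the possible Frobenius groups} $\Hh_x$ themselves (equivalently, of the possible relation lattices $\relmul{M_x}$ up to the Weyl group action on a fixed maximal torus). Purity pins down all archimedean valuations of the eigenvalues, while (2) and (3) bound the $p$-adic slopes and force them to lie in a fixed discrete set; since an algebraic number is determined up to roots of unity by its full collection of valuations, the multiplicative relation lattice is determined by this finite slope datum, and only finitely many such data occur. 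This is exactly how the paper describes the complement $C_k=Sp(2g)^k\setminus W_k$ a few paragraphs after the theorem: as the union of Zariski closures of conjugates of a \emph{finite} set of diagonalizable subgroups $\Hh$ of a maximal torus whose identity component lies in a finite list of subtori and whose index is bounded by some $N$ depending only on $g$.

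Your alternative packaging --- defining $W_k$ by excluding finitely many loci ``characteristic polynomial admits the fixed relation $r$'' for $r$ ranging over relations of total degree $\leq N_0$ --- can be made to work, but only once you know the finiteness of possible $\Hh_x$: from that one extracts a finite generating set for each possible non-trivial relation lattice and takes $N_0$ to majorize all of them. Without that step, merely ruling out relations of bounded degree does not by itself exclude all non-trivial relations. So the logical order is: first finiteness of tori (this is the real content, and where (1)--(3) enter), then the description of $W_k$ as a finite union of closed conditions. Your sketch has these slightly conflated.
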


Consider now the situation of Theorem~\ref{th-2}, for a fixed value of
$k\geq 1$: $f\in\Zz[X]$ is a squarefree monic polynomial of degree
$2g$, where $g\geq 1$ is an integer, $p$ is an odd prime such that $p$
does not divide the discriminant of $f$. Let $U/\Fp_p$ be the open
subset of the affine line where $f(t)\not=0$, and denote again by
$\mathcal{C}_f\ra U$ the family of hyperelliptic curves defined in
Proposition~\ref{pr-1}. Fix an odd prime $\ell\not=p$ such that $q$ is
a square in $\Qq_{\ell}$, and consider the lisse $\Qq_{\ell}$-sheaf
$\rho$
corresponding to
$$
\bigoplus_{1\leq j\leq k}{R^1p_{j,!}\Qq_{\ell}}.
$$
\par
Fixing a square-root $\alpha=\sqrt{p}\in\Qq_{\ell}$, we can form the
rank $1$ sheaf $\alpha^{-\deg(\cdot)}=\Qq_{\ell}(1/2)$ on $U^k$ (see
the discussion in~\cite[9.1.9]{katz-sarnak}), and
the twist
$$
\Bigl(\bigoplus_{1\leq j\leq k}{R^1p_{j,!}\Qq_{\ell}}\Bigr)(1/2)
$$
which has the property that the corresponding representation
$\rho'=\rho\otimes \alpha^{\deg(\cdot)}$ takes value in the group
$Sp(2g,\Qq_{\ell})^k$ (instead of $CSp(2g,\Qq_{\ell})^k$), and which
is pointwise pure of weight $0$ by the Riemann Hypothesis for curves
over finite fields.  Other well-known properties of curves over finite
fields imply that conditions (2) and (3) of Theorem~\ref{th-chin} hold
for $\rho'$. The last condition, in particular, has to do with
$p$-adic divisibility properties of the zeros of the $L$-functions of
the curves in the family, and can be obtained for instance from
Honda-Tate theory (see, e.g.,~\cite{tate}).\footnote{\ For more
  complicated sheaves, checking this assumption typically involves
  crystalline cohomology; see, e.g,~\cite[Th. 3.2]{chin} where it is
  proved to hold, using the techniques of Lafforgue's proof of the
  global Langlands correspondance over function fields, for any lisse
  sheaf which is irreducible with determinant of finite order (e.g.,
  trivial) on a smooth curve.}
\par
Thus, we deduce from Theorem~\ref{th-chin} and the remark before the
statement of this theorem that there exists a non-empty
conjugacy-invariant Zariski dense subset $W_k\subset Sp(2g)^k$ such
that, for any power $q=p^n\not=1$, and for $\uple{t}\in U(\Fp_q)^k$,
we have
$$
\relmult{\nzeros(\uple{C}_{\uple{t}})}=0
$$
\emph{unless} $\rho'(\frob_{\uple{t},\Fp_q})\in W_k$. Hence, defining
$C_k$ to be the closed complement of $W_k$ in $Sp(2g)^k$, we have
$$
 |\{\uple{t}\in U(\Fp_q)^k\,\mid\,
  \relmult{\nzeros(\uple{C}_{\uple{t}})}\not=0 \}|
\leq 
 |\{\uple{t}\in U(\Fp_q)^k\,\mid\, \rho'(\frob_{\uple{t},\Fp_q})\in
 C_k\}|.
$$
\par
Since $C_k$ is closed of dimension $<\dim Sp(2g)^k$, we can apply
Deligne's Equidistribution Theorem to deduce directly
$$
\lim_{q\ra +\infty}{
\frac{1}{q^k}|\{\uple{t}\in U(\Fp_q)^k\,\mid\,
  \relmult{\nzeros(\uple{C}_{\uple{t}})}\not=0 \}|
}=0.
$$
\par
This is a qualitative statement, but it can be made quantitative, for
fixed $k$, by appealing to an explicit uniform Chebotarev density
theorem, as in~\cite{quad}, and by reduction modulo $\ell^m$ for some
well-chosen $m\geq 1$. Precisely, $\rho'$ has a natural
$\Zz_{\ell}$-structure, and by the monodromy result of J-K. Yu
(already used in the proof of Theorem~\ref{th-2}) and some fairly
standard group theory, the homomorphisms
$$
\pi_1(\bar{U}^k,\bar{\eta})\ra Sp(2g,\Zz/\ell^m \Zz)^k
$$
are surjective for all $m\geq 1$. Since $C_k$ is a proper closed
subset of $Sp(2g)^k$, the order of the image $C_{k,m}$ of $C_k$ modulo
$\ell^m$ satisfies
\begin{equation}\label{eq-codim}
\frac{|C_{k,m}|}{|Sp(2g,\Zz/\ell^m\Zz)^k|}\ll \frac{1}{\ell^m}
\end{equation}
for $m\geq 1$, where the implied constant depends only on $k$. We have
$$
 |\{\uple{t}\in U(\Fp_q)^k\,\mid\, \rho'(\frob_{\uple{t},\Fp_q})\in
 C_k\}|
\leq
|\{\uple{t}\in U(\Fp_q)^k\,\mid\, \rho'(\frob_{\uple{t},\Fp_q})\in
 C_{k,m}\}|.
$$
\par
By the Chebotarev density theorem  we obtain
\begin{multline*}
|\{\uple{t}\in U(\Fp_q)^k\,\mid\, \rho'(\frob_{\uple{t},\Fp_q})\in
 C_{k,m}\}|=\frac{|C_{k,m}|}{|Sp(2g,\Zz/\ell^m\Zz)^k|} q^k\\
+O(q^{k-1/2}
|Sp(2g,\Zz/\ell^m\Zz)^k||C_{k,m}|^{1/2}),
\end{multline*}
where the implied constant depends only on $g$ and $k$ (as
in~\cite[Th. 1.1]{quad}, but arguing as in the beginning of
Theorem~\ref{th-2} to estimate the relevant sum of Betti numbers in
such a way that the dependency only involves $g$ and
$k$). Using~(\ref{eq-codim}) and rough estimates, this gives
$$
|\{\uple{t}\in U(\Fp_q)^k\,\mid\, \rho'(\frob_{\uple{t},\Fp_q})\in
 C_{k,m}\}|
\ll \ell^{-m}q^k+O(q^{k-1/2}\ell^{6mg^2k}),
$$
and the choice of $m$ as the integer $m\geq 1$ such that
$$
\frac{1}{2(6g^2k+1)}\frac{\log q}{\log \ell}-1\leq m<
\frac{1}{2(6g^2k+1)}\frac{\log q}{\log \ell}
$$
(if $m$ exists, but otherwise the result becomes trivial) leads to
$$
 |\{\uple{t}\in U(\Fp_q)^k\,\mid\, \rho'(\frob_{\uple{t},\Fp_q})\in
 C_k\}|
\ll
\ell q^{k-\gamma^{-1}}
$$
with $\gamma=2(6g^2k+1)$, where the implied constant depends only on
$g$ and on $k$. 
\par
Compared to Theorem~\ref{th-2}, two issues arise. The first is the
apparent dependency on the choice of $\ell$ such that $q$ is a square
in $\Qq_{\ell}$, but this is mostly cosmetic. It is clear that one can
take $\ell\leq p$, so the ``vertical'' direction $q=p^n$ with
$n\ra+\infty$ is dealt in this manner. As is, the ``horizontal''
direction $q=p\ra +\infty$ requires something close to the Generalized
Riemann Hypothesis (over $\Qq$) (which implies $\ell\ll (\log p)^2$),
but it is also certainly possible to prove directly a variant of
Theorem~\ref{th-chin} for sheaves of weight $1$ to avoid the twist by
$\alpha^{\deg(\cdot)}$ required to obtain a sheaf of weight $0$.
\par
The second issue is the uniformity in terms of $k$, which is more
delicate, but would be necessary to obtain a result as strong as
Theorem~\ref{th-2}. To deal with it, one needs to write down more
explicitly the closed subset $C_k$ that occurs in the proof, and more
precisely (by the standard point-counting estimates for varieties over
finite fields), one needs to have an estimate for the number of
(geometric) irreducible components of $C_k$. Since $C_k$ is described
quite concretely in~\cite[p.37]{chin}, obtaining a bound seems
feasible ($C_k$ is the union of Zariski closures of conjugates of a
finite set of subgroups $\Hh$ of a maximal torus such that the
connected component of $\Hh$ is among a finite set of subtori, and has
index $\leq N$, where $N$ is some integer depending only on $g$), but
it isn't obvious (to the author) how to do it efficiently. Certainly,
counting only the number of subgroups $\Hh$ leads to a bound worse
than exponential in terms of $k$, which would give a worse dependency
on $k$ than Theorem~\ref{th-2}, but one may hope that not all
subgroups lead to different irreducible components after conjugation
and taking the Zariski closure.

\begin{remark}
  Another interesting contrast between this proof of
  Theorem~\ref{th-2} (for fixed $k$) and the previous one is that this
  one depends crucially on using  $p$-adic information
  about the eigenvalues (via Condition (3) of Theorem~\ref{th-chin}),
  whereas the first one doesn't require any $p$-adic input (if one
  uses Remark~\ref{rm-kunneth}, at least, because otherwise there is,
  hidden in the proof of the necessary estimates for sums of Betti
  numbers, some $p$-adic arguments of Bombieri and
  Adolphson-Sperber). 
\end{remark}

\begin{remark}\label{rm-frob-tori-lindep}
  We now show that the Frobenius torus does not control the linear
  relations between the Frobenius eigenvalues. Let $A=E_1\times
  \cdots\times E_g$ be the product of $g$ pairwise non-isogenous
  ordinary elliptic curves over a finite field $\Fp_q$. Then, for a
  fixed prime $\ell\not=p$, the Frobenius torus of $A$ (which
  corresponds to $\Tt_{\rho}$ for the representation $\rho$ on
  $H^1(A,\Qq_{\ell})$, twisted as before so that the eigenvalues are
  of modulus $1$) is a maximal torus of $Sp(2g)/{\Qq_{\ell}}$.
\par
Let $(\lambda_i,q/\lambda_i)$ denote the eigenvalues of the Frobenius
automorphism for $E_i$, and let $a_i=\lambda_i+q/\lambda_i$ be the
trace of Frobenius, which is an integer. The set
$$
M_A=\{\lambda_1,q/\lambda_1,\ldots, \lambda_g, q/\lambda_g\},
$$
is the set of all Frobenius eigenvalues of $A$.  So we see that any
non-trivial linear relation between the $a_i$'s (which exist in
abundance) gives a non-trivial linear relation between the elements of
$M_A$: defining $T_A=\{a_1,\ldots, a_g\}$, there is an injection
$$
\left\{
\begin{matrix}
\reladd{T_A}&\injecte &\reladd{M_A}&
\\
(n_i)&\mapsto & (m_{\lambda}),&\text{ where }
m_{\lambda_i}=m_{q/\lambda_i}=n_i,
\end{matrix}
\right.,
$$
and since $\reladd{T_A}$ is a $\Zz$-module of rank $g-1$ (the $a_i$
being non-zero), the rank of $\reladd{M_A}$ is $\geq g-1$.
\par
(Note that, conversely, we can fix arbitrarily a $g$-tuple
$(n_i)_{1\leq i\leq g}$, and then find, for all prime powers $q=p^k$
large enough, some $(a_i)_{1\leq i\leq g}$ with $p\nmid a_i$ and
$|a_i|\leq 2\sqrt{q}$, such that
$$
\sum_{1\leq i\leq g}{n_ia_i}=0,
$$
and building the corresponding elliptic curves, this shows that any
arbitrarily fixed linear relation of this type can be obtained from an
abelian variety with maximal Frobenius torus.)
\end{remark}

\end{document}